\newtheorem{thm}{Theorem}[section]
\newtheorem{cor}[thm]{Corollary}
\newtheorem{lem}[thm]{Lemma}
\newtheorem{question}[thm]{Question}
\newtheorem{claim}[thm]{Claim}
\newtheorem{fact}[thm]{Fact}
\theoremstyle{definition}
\newtheorem{definition}[thm]{Definition}
\theoremstyle{remark}
\newtheorem{remark}[thm]{Remark}
\newtheorem{convention}[thm]{Convention}
\newtheorem{notation}[thm]{Notation}
\let\c@equation\c@thm
\numberwithin{equation}{section}
\title[Tail cone Halpern-Läuchli at a large cardinal]{A tail cone version of the Halpern-Läuchli theorem at a large cardinal}
\author{Jing Zhang}
\newcommand{\Addresses}{{
  \bigskip
  \footnotesize

 \textsc{Department of Mathematical Sciences,\\ Carnegie Mellon University, \\Pittsburgh, Pennsylvania, 15213}\par\nopagebreak
  \textit{E-mail}: \texttt{jingzhang@cmu.edu}
}}
\begin{document}

\begin{abstract}
The classical Halpern-Läuchli theorem states that for any finite coloring of a finite product of finitely branching perfect trees of height $\omega$, there exist strong subtrees sharing the same level set such that tuples in the product of the strong subtrees consisting of elements lying on the same level get the same color.
Relative to large cardinals, we establish the consistency of a tail cone version of the Halpern-Läuchli theorem at a large cardinal (see Theorem \ref{Tail cone homogeneity in full generality}), which, roughly speaking, deals with many colorings simultaneously and diagonally. Among other applications, we generalize a polarized partition relation on rational numbers due to Laver and Galvin to one on linear orders of larger saturation.

\end{abstract}

\maketitle
\tableofcontents
\let\thefootnote\relax\footnotetext{2010 \emph{Mathematics Subject Classification}. Primary: 03E02, 03E35, 03E55. }
\section{Introduction, definitions and preliminaries}

Recall that a linear order $(X,<)$ is \emph{$\kappa$-saturated} if for any $A, B\subset X$ such that $|A|, |B|<\kappa$ and $A<B$, then there exists $c\in X$ such that $A<c<B$.

Given $\kappa$ regular, define $<_{lex}$ on $2^{<\kappa}$ such that $s<_{lex} t$ iff $s\neq t$ and
\begin{itemize}
\item whenever $s$ and $t$ are incomparable, $s(j)<t(j)$ for $j\in \kappa$ least such that $s(j)\neq t(j)$;
\item whenever $s\sqsubset t$, $t(j)=1$ for $j=dom(s)$;
\item whenever $t \sqsubset s$, $s(j)=0$ for $j=dom(t)$.
\end{itemize}
Note that this ordering is different from the Kleene-Brouwer ordering ($<_{KB}$) in that for any $x\in 2^{<\kappa}$, $x^\frown 0 <_{lex} x <_{lex} x^\frown 1$ while $x^\frown 0<_{KB}<x^\frown 1<_{KB} x $. We follow the convention as in Laver \cite{MR754925} for this ordering while in M. D\v{z}amonja, J. Larson and W. Mitchell \cite{MR2520110}, this ordering is called ''$<_Q$''.

Then $(2^{<\kappa}, <_{lex})$ is $\kappa$-saturated iff $\kappa$ is regular as proved in Lemma 1.7 of \cite{MR2520110}. Note that if $\kappa^{<\kappa}=\kappa$, there is one $\kappa$-saturated linear order of size $\kappa$ up to isomorphism $\mathbb{Q}_\kappa$, whose order type is denoted as $\eta_\kappa$. We will prove the following in this paper.

\begin{thm}\label{main}
Let $d\in \omega$, $\kappa$ inaccessible and $\lambda$ infinite cardinals satisfying that $\lambda\to (\kappa)_{2^\kappa}^{2d}$ (in fact $\lambda = (2^\kappa)^+$ suffices when $d=1$) be given. Suppose further that $\kappa$ is measurable in the forcing extension by $Add(\kappa, \lambda)$. Then \begin{equation}\label{polarizedEquation}
\begin{pmatrix}
\eta_\kappa \\
\vdots \\
\eta_\kappa
\end{pmatrix} \to \begin{pmatrix}
\eta_\kappa \\
\vdots \\
\eta_\kappa
\end{pmatrix}^{\underbrace{1,\cdots, 1}_{d+1}}_{<\kappa, (d+1)!}
\end{equation}
and \begin{equation}
\begin{pmatrix}
\eta_\kappa \\
\vdots \\
\eta_\kappa
\end{pmatrix} \not \to \begin{pmatrix}
\eta_\kappa \\
\vdots \\
\eta_\kappa
\end{pmatrix}^{\underbrace{1,\cdots, 1}_{d+1}}_{<\kappa, (d+1)!-1}
\end{equation}
\end{thm}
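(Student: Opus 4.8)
The statement has two halves: the positive partition relation \eqref{polarizedEquation} and its negative counterpart, and they are proved by entirely different means --- the negative half is elementary, while the positive half is the payoff of the tail cone Halpern--Läuchli theorem, Theorem \ref{Tail cone homogeneity in full generality}. Since $\kappa$ is inaccessible, $\kappa^{<\kappa}=\kappa$, so $\mathbb{Q}_\kappa$ exists, and by its uniqueness together with Lemma~1.7 of \cite{MR2520110} (which says $(2^{<\kappa},<_{lex})$ is $\kappa$-saturated) we identify each of the $d+1$ copies of $\eta_\kappa$ with $(2^{<\kappa},<_{lex})$. For the negative relation we fix a bijection $e\colon 2^{<\kappa}\to\kappa$ --- available since $|2^{<\kappa}|=2^{<\kappa}=\kappa$ --- and color a tuple $(x_0,\dots,x_d)\in(2^{<\kappa})^{d+1}$ by the linear order that $\langle e(x_0),\dots,e(x_d)\rangle$ induces on $\{0,\dots,d\}$; this uses $(d+1)!<\kappa$ colors. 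If each $A_i\subseteq 2^{<\kappa}$ is isomorphic to $\eta_\kappa$, then $|A_i|=\kappa$, so $e''A_i$ is cofinal in the regular cardinal $\kappa$, and hence for every $\sigma\in\mathrm{Sym}(d+1)$ one may greedily pick $a_{\sigma(0)}\in A_{\sigma(0)}$, then $a_{\sigma(1)}\in A_{\sigma(1)}$ with $e(a_{\sigma(1)})>e(a_{\sigma(0)})$, and so on, realizing the order $\sigma$ inside $\prod_i A_i$. Thus all $(d+1)!$ colors appear on $\prod_i A_i$, which establishes the negative relation.

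For the positive relation, fix $c\colon(2^{<\kappa})^{d+1}\to\mu$ with $\mu<\kappa$. The obstruction to applying an ordinary Halpern--Läuchli theorem, and the reason one wants a version dealing with many colorings ``simultaneously and diagonally'', is that $c$ colors tuples whose coordinates may sit at arbitrary, possibly coinciding, levels. One therefore presents $c$ to Theorem \ref{Tail cone homogeneity in full generality} as the diagonal family of finite-product colorings it induces: given a choice of levels $\ell_0<\dots<\ell_d$ and an assignment $\sigma\in\mathrm{Sym}(d+1)$ of these to coordinates, the restriction of $c$ to tuples with $|x_i|=\ell_{\sigma(i)}$ is a coloring of $\prod_{i\le d}2^{\ell_{\sigma(i)}}$. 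Feeding all of these to the tail cone theorem --- the sole place where the hypotheses that $\kappa$ is inaccessible, that $\kappa$ is measurable in the extension by $Add(\kappa,\lambda)$, and that $\lambda\to(\kappa)^{2d}_{2^\kappa}$ (with $\lambda=(2^\kappa)^+$ sufficing when $d=1$) enter --- yields strong subtrees $S_0,\dots,S_d\subseteq 2^{<\kappa}$ with level sets cofinal in $\kappa$ such that the $c$-value of any tuple $(x_0,\dots,x_d)\in\prod_i S_i$ depends only on the order type of the sequence of levels $(|x_0|,\dots,|x_d|)$ (coincidences allowed); the homogenization over the choices of levels that reduces the $c$-value to this data is precisely what the partition relation on $\lambda$ supplies, and is internal to the statement of Theorem \ref{Tail cone homogeneity in full generality}.

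Finally one runs the Galvin--Laver count and checks the subtrees still realize $\eta_\kappa$. After a routine further pruning we may take the $S_i$ to have pairwise disjoint level sets (if they share a common level set $L$, assign the elements of $L$ to the $d+1$ trees in round-robin fashion and keep two immediate successors at each retained level); then every tuple in $\prod_i S_i$ has its coordinates at pairwise distinct levels, so its $c$-value depends only on the genuine \emph{linear} order that $(|x_0|,\dots,|x_d|)$ induces on $\{0,\dots,d\}$, of which there are $(d+1)!$. Hence $c$ takes at most $(d+1)!$ colors on $\prod_i S_i$. And each $S_i$ is a strong subtree of $2^{<\kappa}$ whose level set is cofinal in, hence of order type, $\kappa$, so since the proof of Lemma~1.7 of \cite{MR2520110} is local to the tree structure one gets $(S_i,<_{lex})\cong\mathbb{Q}_\kappa=\eta_\kappa$; taking $A_i:=S_i$ gives \eqref{polarizedEquation}. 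The main obstacle throughout is the correct use of Theorem \ref{Tail cone homogeneity in full generality}: packaging the single coloring $c$ as a diagonal family of finite-product colorings so that one application simultaneously controls every combination of levels and every assignment of levels to coordinates, and calibrating the amount of homogenization needed against the partition hypothesis on $\lambda$ (and its sharpening when $d=1$). The counting of patterns and the entire negative direction are, by comparison, routine.
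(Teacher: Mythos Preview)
Your negative half is fine and matches the paper's remark following Theorem~\ref{mainproof}. The positive half, however, has a genuine gap at the central step.

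You assert that ``feeding all of these to the tail cone theorem \ldots\ yields strong subtrees $S_0,\dots,S_d$ such that the $c$-value of any tuple depends only on the order type of the sequence of levels''. But Theorem~\ref{Tail cone homogeneity in full generality} does not accept the kind of input you describe, nor does it produce this kind of output. Its inputs are $\kappa$ many colorings $f_i$, each defined on \emph{level sequences} of a product of $d$ (not $d+1$) nice trees; its output is a sequence of $d$ strong subtrees sharing a common level set, together with tail cone homogeneity for level sequences only: $f_i(\bar t)=f_i(\bar t\restriction' i{+}1)$ whenever $\bar t$ is a level sequence of height $\ge i{+}1$. Your ``packaged'' colorings are maps on finite products $\prod_i 2^{\ell_{\sigma(i)}}$, which are not colorings of $\prod_{j<d}T_j$ at all, and in any case no single invocation of the theorem will speak about tuples whose coordinates lie at \emph{different} levels of the output trees. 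The sentence about the partition relation on $\lambda$ being ``internal to the statement'' is also a misreading: that hypothesis is consumed inside the proof of Theorem~\ref{Tail cone homogeneity in full generality} (via Lemma~\ref{Cleanup}) and has nothing to do with homogenizing over level patterns of the tuples you color.

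What your argument is missing is precisely the content of Section~\ref{polarize}. The paper first derives $HL^{tc}(d,{<}\kappa,\kappa)$ from the hypotheses via Theorem~\ref{Tail cone homogeneity in full generality}, and then invokes Theorem~\ref{mainproof}, whose proof is nontrivial: one uses Corollary~\ref{tailconeapp} iteratively (the Claim inside Lemma~\ref{AlmostAll}) to reduce a tuple with increasing heights to a level sequence, then applies the dimension boost $HL^{tc}(d,\delta,\kappa)\Rightarrow HL(d{+}1,\delta,\kappa)$ (Theorem~\ref{DimInduct}) to obtain, for each permutation $\pi$, strong subtrees and a color $\gamma_\pi$ with the $\forall^{<\kappa}$-statement of Lemma~\ref{AlmostAll}; finally a recursive construction builds \emph{nice} (not strong) subtrees on which every tuple has color in $\{\gamma_\pi:\pi\in\mathrm{Perm}(d{+}1)\}$. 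Your intermediate claim --- that on suitable strong subtrees the color depends only on the order type of heights for \emph{all} tuples --- is in fact stronger than what the paper obtains at any intermediate stage, and you have given no mechanism to obtain it. The round-robin pruning and the $(d{+}1)!$ count in your last paragraph are correct, but they rest on an unproved (and, as stated, unprovable-by-one-application) step.
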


The partition relation symbol in (\ref{polarizedEquation}) means the following: for any $\delta<\kappa$ and any $f: \mathbb{Q}_\kappa^{d+1}\to \delta$, there exists $X_j\subset \mathbb{Q}_\kappa$ for $j\leq d$ such that $type(X_j)=\eta_\kappa$ and $|f''\Pi_{j\leq d} X_j|\leq (d+1)!$.

It is known that starting with a ground model satisfying GCH containing a cardinal $\kappa$ that is $(\kappa+2d+1)$-strong (or just $(\kappa+2)$-strong when $d=1$), we can get a forcing extension where the hypothesis in the theorem is true. 

This is a higher analog of a theorem by Laver \cite{MR754925} (the 2-dimensional case is due to Galvin). The classical proof of Laver's theorem is a combination of the Halpern-Läuchli theorem and a fusion type argument. Let us first review some basic definitions. 

\begin{definition}
A partial order $(T,\prec)$ is a \emph{tree} if for each $t\in T$, $\{s\in T: s\prec t\}$ is well ordered by $\prec$.
\end{definition}

\begin{remark}
When there is no confusion about the tree order, we use $T$ to refer to the tree $(T,\prec)$. 
\end{remark}

\begin{definition}
Given $T$ a tree, $t\in T$, $\alpha\in \kappa$ and $A\subset \kappa$,
\begin{itemize}
\item The height of $t$ in $T$ is $ht_T(t)=$ order type of $\{s\in T: s \prec t\}$. When there is no confusion we write $ht$ instead of $ht_T$. $Succ_T(t)=\{s\in T: ht(s)=ht(t)+1 \ \& \  t \prec s\}$, $Pred_T(t)=\{t'\in T: t'\prec t\}$.
\item $T[t]=\{s\in T: s \succeq t\},T(\alpha)=\{s\in T: ht(s)=\alpha\},  T(<\alpha)=\bigcup_{\xi<\alpha} T(\xi), T(\alpha)[t]=\{s\in T(\alpha): s \succeq t \}$.

\end{itemize} 
\end{definition}

\begin{definition}\label{tree}
$(T, \prec)$ is a \emph{nice tree} with the height function $ht_T$ if 
\begin{itemize}
\item $(T,\prec)$ is a tree.
\item $(T, \prec)$ has a unique \emph{root} which we call $root(T)$, that is for any $t\in T$, $root(T)\preceq t$.
\item $T$ is a \emph{$\kappa$-tree}, namely for each $s\in T \ ht_T(s)<\kappa$, $\sup_{t\in T} ht_T(t) =\kappa$ and  for any $\alpha<\kappa$, $|T(\alpha)|<\kappa$. 
\item $T$ is \emph{well-pruned}, that is every maximal path through $T$ has order type $\kappa$ under $\prec$ and for any limit $\gamma$ and $t,s\in T(\gamma)$, if $Pred_T(t)=Pred_T(s)$, then $s=t$.

\item $T$ is \emph{perfect}, namely, for any $t\in T$, there exists $s_0, s_1  \succ t$, $s_0,s_1\in T$, $s_0$ and $s_1$ are incomparable.
\end{itemize}

Note such trees are automatically $<\kappa$-closed, in the following sense: for any $\alpha<\kappa$ and increasing $\bar{s}=\langle s_i\in T: i<\alpha\rangle$, there exists an upper bound $t\in T$ for $\bar{s}$. Also notice that if $\kappa^{<\kappa}=\kappa$ and $T$ is a nice tree, then $(T,<_{lex})\simeq \mathbb{Q}_\kappa$. This fact will be useful later on.

\end{definition}

\begin{definition}\label{StrongSubtree}
Given $T$ a nice tree, we say $T'\subset T$ is a \emph{strong subtree} if $T'$ is a nice tree and there exists a set $A\in [\kappa]^\kappa$ with increasing enumeration $\langle a_i: i<\kappa\rangle$ such that 
\begin{enumerate}
\item for any $\xi <\kappa, \ T'(\xi)\subset T(a_\xi)$
\item for each $\xi<\kappa, s\in T'(\xi)$ and each $t\in Succ_T(s)$, there exists exactly one extension of $t$ in $T'(\xi+1)$.
\end{enumerate}
A set $A$ as above is called a \emph{witnessing level set} for $T'$.
\end{definition}

In search for a generalization of Laver's theorem, two difficulties are present.

The first difficulty is the need for the Halpern-Läuchli theorem at a large cardinal. The classical Halpern-Läuchli theorem is a partition theorem about finite products of finitely branching infinite perfect trees of height $\aleph_0$. It was first proved in \cite{MR0200172} by J. Halpern and H. Läuchli and was used in \cite{MR0284328} to establish the independence of the axiom of choice from the Boolean prime ideal theorem over ZF. 

L.~Harrington came up with a nice forcing proof of the classical Halpern-Läuchli theorem, see for example  \cite{MR1486583}. S. Shelah \cite{MR1218224} generalized L. Harrington's argument to show that consistently for any $m\in \omega$ and any $<\kappa$ coloring of $\bigcup_{\xi<\kappa} [2^{\xi}]^m$,  there exists a strong subtree $T\subset 2^{<\kappa}$ such that for any $\xi<\kappa$, $[T\cap 2^\xi]^m$ only gets finitely many colors. M. D\v{z}amonja, J. Larson and W. Mitchell in \cite{MR2520110} further extended the result to deal with any $m$-sized antichains in $T$. They further applied the theorem to obtain a partition theorem for large saturated linear orders, generalizing a classical theorem by D.~Devlin \cite{Devlin}. In particular they show: 
\begin{thm}[M. D\v{z}amonja, J. Larson and W. Mitchell \cite{MR2520110}]\label{unpo}
Under the hypothesis of Theorem \ref{main} for $d\geq 2$, there exists $t_d^+\in \omega$ such that $(\eta_\kappa)\to (\eta_\kappa)_{<\kappa, t_d^+}^d$ and $(\eta_\kappa)\not\to (\eta_\kappa)_{<\kappa, t_d^+-1}^d$. In fact $[\eta_\kappa]^d$ can be classified into $t_d^+$ many types and the coloring restricted on each type is constant. Here $t_d^+\geq t_d + (2^{d-1})(-1+ \Pi_{i<d}i!)$ where $t_d$ is the $d$-th tangent number that can be calculated recursively as $t_n = \Sigma_{i=1}^{ n-1} {2n-2 \choose 2i-1} t_i t_{n-i}$ with $t_1=1$.
\end{thm}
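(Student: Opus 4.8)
The plan is to derive Theorem~\ref{unpo} from the Halpern--Läuchli theorem at $\kappa$ together with a fusion argument, in the spirit of Devlin's proof of the countable case \cite{Devlin}, while tracking the extra combinatorics forced by limit levels. Since $\kappa$ is inaccessible, $\kappa^{<\kappa}=\kappa$, so by the observation at the end of Definition~\ref{tree} we may realize $\mathbb{Q}_\kappa$ as $(T,<_{lex})$ for a fixed nice tree $T$ (for instance $T=2^{<\kappa}$), and every strong subtree $T'\subseteq T$ again satisfies $(T',<_{lex})\simeq\mathbb{Q}_\kappa$. Thus, given $\delta<\kappa$ and $c\colon[T]^d\to\delta$, it suffices to find a strong subtree $T^*\subseteq T$ on which $c$ is constant on each \emph{similarity class} of $d$-element subsets and which realizes only \emph{persistent} classes (defined below); $t_d^+$ will be the number of persistent classes.

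To set up the classification, attach to $a=\{a_1<_{lex}\cdots<_{lex}a_d\}\in[T]^d$ its \emph{meet closure} $\widehat a$: the closure of $a$ under the tree meet $t\wedge s$, which exists since $T$ is well-pruned, and which has at most $2d-1$ nodes. The \emph{similarity type} of $a$ records $\widehat a$ up to the natural isomorphism preserving $\prec$, $<_{lex}$, the predicate ``lies in $a$'', the linear order that the $\prec$-heights induce on $\widehat a$, and --- the feature special to the $\kappa$ setting --- for each node of $\widehat a$ whether its height is a successor or a limit ordinal, and how the relevant branching children sit above a limit node. There are only finitely many similarity types. Coding a set of a fixed type $\sigma$ by the nodes it runs through, together with the increasing sequence of heights occupied by the $\widehat a$-nodes, turns the restriction of $c$ to the sets of type $\sigma$ into a $<\kappa$-coloring of a finite product of copies of $T$ laid out along the maximal branches of the $\sigma$-skeleton.

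The engine is the Halpern--Läuchli theorem at $\kappa$. Under the hypothesis of Theorem~\ref{main} this holds in the ground model: a putative counterexample already lies there, and a Harrington-style argument --- forcing with $Add(\kappa,\lambda)$ and exploiting the measurable cardinal in its extension --- extracts from a generic filter a ground-model strong subtree that is homogeneous for the given $<\kappa$-coloring, with no residual dependence on heights (this is where the relation $\lambda\to(\kappa)_{2^\kappa}^{2d}$ and the measurability are consumed). Theorem~\ref{Tail cone homogeneity in full generality} of the present paper supplies a form strong enough to handle the finitely many colorings $c\restriction\sigma$ and all the skeleton levels of each simultaneously and diagonally, producing in one stroke a strong subtree $T^*$ on which $c$ is constant on every similarity class; alternatively, one iterates the plain theorem over the finitely many $\sigma$ by a $\kappa$-length fusion, at stage $\alpha<\kappa$ committing $T^*(<\alpha)$ and the next element of the witnessing level set while interleaving the requirements of all $\sigma$. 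Steering this construction, by the census below, so that $T^*$ realizes only persistent classes, we obtain $|c''[T^*]^d|\leq t_d^+$, hence the positive relation and the claimed classification.

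For optimality, call a similarity type \emph{persistent} if it is realized inside every strong subtree of $T$, equivalently (by a standard argument) inside every subset of $\mathbb{Q}_\kappa$ of order type $\eta_\kappa$. Coloring $d$-sets by their persistent type, with a single extra color for the non-persistent sets, then uses at least $t_d^+$ colors on any copy of $\mathbb{Q}_\kappa$, whence $(\eta_\kappa)\not\to(\eta_\kappa)^d_{<\kappa,\, t_d^+-1}$. It remains to estimate $t_d^+$: the persistent types whose skeleton can be placed so that every branching recurs cofinally in $\kappa$ reproduce Devlin's count, the $d$-th tangent number $t_d$, whereas a branching pinned to a limit height yields genuinely new persistent types, and bookkeeping over which internal nodes are forced to limit heights (the factor $2^{d-1}$) and over the admissible orderings of the limit levels of the resulting sub-skeletons (contributing the factor $-1+\prod_{i<d}i!$) gives $t_d^+\geq t_d+2^{d-1}(-1+\prod_{i<d}i!)$. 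I expect this final combinatorial census --- isolating exactly the persistent types of $d$-element subsets of a $\kappa$-branching nice tree and counting them, along with the verification that the fusion can be steered to avoid the non-persistent ones --- to be the main obstacle; the forcing and Halpern--Läuchli inputs are, for this argument, imported wholesale.
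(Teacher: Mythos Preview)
This theorem is not proved in the present paper: it is quoted from D\v{z}amonja--Larson--Mitchell \cite{MR2520110} as background, with no accompanying argument here. There is therefore no ``paper's own proof'' to compare your proposal against.

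On the substance of your sketch, the overall architecture (similarity types via meet closures, homogenizing per type by a Halpern--L\"auchli theorem at $\kappa$, then counting persistent types) is indeed the shape of the argument in \cite{MR2520110}. Two points deserve caution. First, invoking Theorem~\ref{Tail cone homogeneity in full generality} of \emph{this} paper to prove a result that this paper cites as prior work is anachronistic; more to the point, the D\v{z}amonja--Larson--Mitchell proof does not use a polarized/product Halpern--L\"auchli but rather a single-tree version for colorings of $m$-element level antichains $\bigcup_{\xi<\kappa}[T(\xi)]^m$ (their extension of Shelah's theorem), which is what directly matches the ``$d$-element subset of one tree'' setting. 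Your translation of a fixed type $\sigma$ into a coloring of a product of copies of $T$ is plausible but not free: you must simultaneously control the heights and identities of the internal meet nodes, and keep the resulting subtrees aligned so that realized sets still have type $\sigma$; this is exactly where the antichain version earns its keep. Second, your fallback ``iterate the plain theorem by a $\kappa$-length fusion'' is precisely the maneuver the introduction of this paper flags as unavailable in the uncountable setting (and which motivates the tail cone version in the first place). Finally, your lower-bound bookkeeping for $t_d^+$ is suggestive but not yet an argument: one must exhibit explicit persistent types beyond the $t_d$ Devlin types and verify their persistence in every strong subtree, which in \cite{MR2520110} is a substantial case analysis rather than a one-line factorization.
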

Notice we can derive a version of Theorem \ref{main} from the above theorem. Given $f: (2^{<\kappa})^d\to \delta$, form $g: [2^{<\kappa}]^d \to \delta$ such that for any $\sigma_0<_{lex} \cdots <_{lex} \sigma_{d-1}$, 
$g(\{\sigma_0,\cdots, \sigma_{d-1}\})=f(\sigma_0,\cdots, \sigma_{d-1})$. Find $T\subset 2^{<\kappa}$ with $(T,<_{lex})$ of type $\eta_\kappa$ such that $|g''[T]^d|\leq t_d^+$. Find sub-orders $T_0 <_{lex}\cdots <_{lex} T_{d-1}$ of $T$ each of type $\eta_\kappa$, then $|f(T_0\times \cdots \times T_{d-1})|\leq t_d^+$. 
However the conclusion is not exactly that of Theorem \ref{main} as 
\begin{enumerate}
\item $t_d^+$ is a lot larger than $d!$ when $d$ is large;
\item under the hypothesis with parameter $d$, the conclusion is valid for exponent $j$ for every $j\leq d$ rather than for exponent $j$ for every $j\leq d+1$ as in Theorem \ref{main}.
\end{enumerate}
As the canonical types that occur in the polarized case are simpler, the method presented here will be different from that in \cite{MR2520110} and closer to Laver's argument in \cite{MR754925}. 

More recently in \cite{DobrinenHathaway}, N. Dobrinen and D. Hathaway considered the polarized version of the Halpern-Läuchli theorem regarding trees whose height is some large cardinal $\kappa$. They prove the equivalence of various forms of the Halpern-Läuchli theorem at the level of weakly compact cardinals and go on to establish the consistency of the asymmetric dense set version of the Halpern-Läuchli theorem (Theorem \ref{DHmainThm}). It turns out that this polarized version is the most relevant for our purpose.

The second difficulty in generalizing Laver's theorem is the lack of fusion arguments when the height of the tree is larger than $\aleph_0$. One might hope to find some version of the Halpern-Läuchli theorem that finds ''large'' strong subtrees. For example, in the countable case we have the following: given a Ramsey ultrafilter $U$ on $\omega$, a finitely branching perfect tree $T$ of height $\omega$ and a finite coloring $f$, there exists a strong subtree $T'$ whose level set $A\in U$ such that $T'$ is homogeneous with respect to $f$. In fact higher dimensional results are also true, which is a corollary of a theorem by Mathias \cite{MR0491197}, see \cite{MR1486583} for a proof. One might hope for the analogous result for $\kappa$ measurable with normal measure $U$, which will then give us ''large'' strong subtrees. However, the analog is false as witnessed by the Sierpinski-style coloring $2^{<\kappa}\to 2$ ensuring that for each strong limit $\alpha<\kappa$ and each subtree $T\subset 2^{<\alpha}$ with $2^{\alpha}$ branches there exist two branches, i.e. nodes on level $\alpha$, that get different colors. This shows that in general the witnessing level sets cannot even be stationary.

In order to tackle these two problems, we consider a version of the Halpern-Läuchli theorem, which we call the \emph{tail cone version}, such that it incorporates the consequence we want to achieve with the usual fusion argument. We will establish its consistency.

We will mainly be concerned with the versions of the Halpern-Läuchli theorem considered in \cite{DobrinenHathaway}.
In Section \ref{1-D}, we show at the level of a weakly compact cardinal, the 1-dimensional Halpern-Läuchli theorem holds for $<\kappa$ many colors, extending a previous result by Dobrinen and Hathaway \cite{DobrinenHathaway} where the number of colors is finite. In Section \ref{TailCone}, we establish the consistency of the tail cone version of the Halpern-Läuchli theorem. Further we show there is a dimension boost phenomenon, namely, the $d$-dimensional tail cone Halpern-Läuchli theorem implies the usual $(d+1)$-dimensional Halpern-Läuchli theorem. Then in Section \ref{polarize} we prove the tail cone version of the Halpern-Läuchli theorem implies the polarized partition relation in Theorem \ref{main}. Finally in Section \ref{NonImp}, we show the consistency of the Halpern-Läuchli theorem as considered in \cite{DobrinenHathaway} being true at $\kappa$ yet $\kappa$ is not weakly compact, in contrast with other versions previously considered.

We end the section with more definitions, notations and preliminaries. Let $\kappa$ be a strongly inaccessible cardinal.

\begin{notation}
We will typically use a letter with an overhead bar to denote a tuple and a subscript to indicate the element in the tuple with the prescribed coordinate.
For example, if $\langle A_j: j\in I\rangle$ is a given set with an index set $I$, then $\bar{x}\in \Pi_{j} A_j$ and $x_j$ denotes the $j$-th element in the tuple. In particular, $x_j\in A_j$.
\end{notation}

\begin{definition}[\cite{DobrinenHathaway}, \cite{MR2603812}]
Given a sequence of nice trees $\langle T_j: j<d\rangle$, $M\subset \Pi_{j<d}T_j$, and $\bar{x}\in \Pi_{j<d}T_j$ with $\xi>\sup_{j<d}ht_{T_j}(x_j)$, we say $M$ is \emph{$\bar{x}$-$\xi$-dense} if for each $\bar{t}\in \Pi_{j<d} T_j (\xi) [x_j]$, there exists $\bar{t}'\in M$ such that $t'_j\succeq_{ T_j} t_j$ for each $j<d$. $M$ is a \emph{somewhere dense set} if there exists $\bar{x}$ and $\xi$ such that $M$ is $\bar{x}$-$\xi$-dense. A \emph{somewhere dense matrix} is a somewhere dense set of the form $M_0 \times \cdots \times M_{d-1}$ where $M_j\subset T_j$. 
We usually add an overhead bar to denote a matrix, that is $\bar{M}$ usually means $\bar{M}=M_0\times \cdots M_{d-1}$ for some $M_j\subset T_j$ for $j<d$.
\end{definition}

We call $\bar{t}\in \Pi_{j<d} T_j$ a \emph{level sequence} and $\bar{M}$ a \emph{level matrix} if all the elements lie on the same level. The height of a level sequence (a level matrix) will be defined naturally as the common level on which each element lies, denoted as $ht(\bar{t})$ ($ht(\bar{M})$).

\begin{convention}\label{RestrictionConvention}
Whenever $\bar{t}\in \Pi_{j<d} T_j$ is a level sequence, for some $\xi\leq ht(\bar{t})$, we use $\bar{t}\restriction_{\Pi_{j<d} T_j} \xi$ to denote the tuple whose $j$-th coordinate is the $\xi$-th element with respect to the natural tree ordering in $\{s\in T_j: s\leq_{T_j} t_j\}$.  Define similarly for level matrices.
\end{convention}

\begin{notation}\label{restriction}
In this paper, we will frequently take restrictions with respect to different subtrees. To avoid some cumbersome indices, we will adopt the following notation. The superscript for the restriction symbol will correspond to taking restriction with respect to the strong subtrees with the same superscript. For example, given $T'_j$, which are strong subtrees of $T_j$ for $j<d$ and a level sequence $\bar{t}\in \Pi_{j<d} T'_j$, $\bar{t}\restriction'  i$ denotes $\bar{t}\restriction_{\Pi_{j<d}T_j'} i$. Likewise $\restriction^*$ for $T_j^*$, $\restriction^i$ for $T_j^i$ etc.
Similarly for level matrices.
\end{notation}

The next definition collects different versions of the Halpern-Läuchli theorem that will be considered in this paper. The motivation for the following different versions considered here is that the asymmetric versions are technical strengthenings of the Halpern-Läuchli theorem while the somewhere dense version is a localized statement that is equivalent under suitable assumptions on the large cardinal $\kappa$.
The dense set version enables the construction of desired strong subtrees in a rather straightforward way.

\begin{definition}[Dobrinen and Hathaway \cite{DobrinenHathaway}]\label{HLdef}

Let $d\in \omega$, $\kappa$ inaccessible and $\delta<\kappa$ be given.

\begin{enumerate}
\item $HL(d,\delta,\kappa)$ (the strong subtree version) abbreviates the following statement:
For any $\langle T_j: j<d\rangle$, a sequence of nice trees, and any coloring $f: \Pi_{j<d} T_j \to \delta$, there exist strong subtrees $T_j'\subset T_j$ with the same witnessing level set such that 
$f'' \bigcup_{\xi<\kappa}\Pi_{j<d} T'_j(\xi)$ is constant.

\item $HL^{asym}(d,\delta, \kappa)$ (the asymmetric strong subtree version) abbreviates the following statement:
For any $\langle T_j: j<d\rangle$, a sequence of nice trees, and any coloring $f: \Pi_{j<d} T_j \to \delta$, there exist strong subtrees $T_j'\subset T_j$ with root $t_j$ sharing the same witnessing level set such that for some $\gamma<\delta$, 
$f'' \bigcup_{\xi<\kappa}\Pi_{j<d} T'_j(\xi)=\{\gamma\}$ and if $\gamma=0$ we could take $t_j=root(T_j)$ for each $j<d$.

\item $DSHL(d,\delta,\kappa)$ (the dense set version) abbreviates the following statement: for any nice trees $\langle T_j: j<d\rangle$ and $f: \Pi_{j<d}T_j \to \delta$, there exist $\xi\in \kappa$, $\bar{t}\in \Pi_{j<d} T_j(\xi)$ and $\gamma<\delta$ such that for all $\eta\in \kappa$ there exists a $\bar{t}$-$\eta$-dense level matrix $\bar{D}\subset \Pi_{j<d} T_j$ and $f'' \bar{D}=\{\gamma\}$.

\item $DSHL^{asym}(d,\delta,\kappa)$ (the asymmetric dense set version) abbreviates the following statement: for any nice trees $\langle T_i: i<d\rangle$ and $f: \Pi_{i<d}T_i \to \delta$, there exist $\xi\in \kappa$, $\bar{t}\in \Pi_{i<d} T_i(\xi)$ and $\gamma<\delta$ such that for all $\eta\in \kappa$ there exists a $\bar{t}$-$\eta$-dense level matrix $\bar{D}\subset \Pi_{j<d} T_j$ and $f'' \bar{D}=\{\gamma\}$. In addition if $\gamma$ is $0$, then $\bar{t}$ could be chosen to be $\langle root(T_j):j<d  \rangle$.

\item $SDHL(d,\delta,\kappa)$ (the leveled somewhere dense version) abbreviates the following statement: for any nice trees $\langle T_j: j<d\rangle$ and $f: \Pi_{j<d}T_j \to \delta$, there exists a level sequence $\bar{t}\in \Pi_{j<d} T_i$ and some $\bar{t}$-$(ht(\bar{t})+1)$-dense level matrix $\bar{D}$ that $|f'' \bar{D}|=1$. 
\item $SDHL'(d,\delta,\kappa)$ (the somewhere dense version) abbreviates the following statement: for any nice trees $\langle T_j: j<d\rangle$ and $f: \Pi_{j<d}T_j \to \delta$, there exists a somewhere dense matrix $\bar{D}$ that $|f'' \bar{D}|=1$. 
\end{enumerate}
\end{definition}

Note $HL^{asym}(d,\delta,\kappa)$ and $DSHL^{asym}(d,\delta,\kappa)$ imply $HL(d,\delta,\kappa)$ and $DSHL(d,\delta,\kappa)$ respectively.

It is important in the strong subtree version in Definition \ref{HLdef} that these strong subtrees share the same level set. However, there are circumstances, for example in the proof of Theorem \ref{DimInduct} and Lemma \ref{AlmostAll}, where strong subtrees not necessarily sharing the same level set are produced. Hence whenever a sequence of strong subtrees is produced, we will always make it clear whether they share the same level set.

\begin{thm}[Dobrinen and Hathaway \cite{DobrinenHathaway}]\label{EquiForm}
For $\kappa$ a weakly compact cardinal, $d\in \omega, \delta<\kappa$, the following are equivalent.
\begin{enumerate}

\item $DSHL(d,\delta,\kappa)$

\item $HL(d,\delta,\kappa)$

\item $SDHL(d,\delta,\kappa)$
\item $SDHL'(d,\delta,\kappa)$

\end{enumerate}

In fact (1), (2), (3) are already equivalent when $\kappa$ is merely strongly inaccessible.
\end{thm}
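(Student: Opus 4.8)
The strategy is to establish the cycle $(1)\Rightarrow(2)\Rightarrow(3)\Rightarrow(1)$ using only that $\kappa$ is strongly inaccessible, and then to fold in $(4)$ via the trivial $(3)\Rightarrow(4)$ together with $(4)\Rightarrow(3)$, which is the sole place weak compactness enters. Three arrows are routine. For $(2)\Rightarrow(3)$: given strong subtrees $T'_j\subseteq T_j$ with common witnessing level set $\langle a_\xi:\xi<\kappa\rangle$ on which $f$ is constantly $\gamma$, put $\bar t=\langle root(T'_j):j<d\rangle$ and $\bar D=\prod_{j<d}T'_j(1)$; clause (2) of Definition \ref{StrongSubtree} (with $\xi=0$) says each immediate $T_j$-successor of $root(T'_j)$ has a unique extension in $T'_j(1)$, so $\bar D$ is $\bar t$-$(a_0+1)$-dense, and $\bar D\subseteq\bigcup_{\xi<\kappa}\prod_{j<d}T'_j(\xi)$ forces $f''\bar D=\{\gamma\}$, giving $SDHL$. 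For $(1)\Rightarrow(3)$ one instantiates the clause ``for all $\eta$'' of $DSHL$ at $\eta=ht(\bar t)+1$. And $(3)\Rightarrow(4)$ holds because a $\bar t$-$(ht(\bar t)+1)$-dense level matrix is a somewhere dense matrix.

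For $(1)\Rightarrow(2)$ I would build $T'_j$ by a simultaneous recursion of length $\kappa$ with a single increasing witnessing sequence $\langle a_\xi\rangle$. Fix $\bar t,\gamma$ witnessing $DSHL$ for $f$ and, for each $\eta$, a $\bar t$-$\eta$-dense level matrix $\bar D^\eta=\prod_j D^\eta_j$ with $f''\bar D^\eta=\{\gamma\}$; note each $D^\eta_j$ dominates $T_j(\eta)[t_j]$. Start the roots inside $\bar D^{ht(\bar t)+1}$ so that the root tuple already receives colour $\gamma$. At a successor stage, using $\bar D^{a_\eta+1}$, send each $q\in Succ_{T_j}(p)$ (for $p\in T'_j(\eta)$) to a node of $D^{a_\eta+1}_j$ above it and let $T'_j(\eta+1)$ be the set of those nodes, with $a_{\eta+1}=ht(\bar D^{a_\eta+1})$. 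The delicate point is the limit stage $\lambda$: with $\mu=\sup_{\eta<\lambda}a_\eta$, each length-$\lambda$ branch of $T'_j$ determines a unique $T_j$-node $b$ at level $\mu$, and we replace $b$ by a node of $D^\mu_j$ above it, putting all such nodes into $T'_j(\lambda)$ with $a_\lambda=ht(\bar D^\mu)$. In every case $\prod_j T'_j(\xi)\subseteq\bar D^{(\cdot)}$, so $f$ stays constantly $\gamma$ on the level products; injectivity of the choices and niceness of the $T_j$ make the $T'_j$ into strong subtrees with common level set; inaccessibility keeps all $a_\xi<\kappa$ and forces $\sup_\xi a_\xi=\kappa$; and standard bookkeeping (queueing, at each node placed, a demand that the subtree eventually split above it) makes the $T'_j$ perfect.

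For $(3)\Rightarrow(1)$ I would argue contrapositively, by a \emph{speed-up} of the index trees. Assume $DSHL$ fails for $f$; since ``$\bar t$-$\eta$-dense'' is monotone in $\eta$ and there are $\delta<\operatorname{cf}(\kappa)=\kappa$ colours, for each level tuple $\bar t$ there is a least $\eta(\bar t)<\kappa$, necessarily $>ht(\bar t)$, with no $\bar t$-$\eta(\bar t)$-dense monochromatic level matrix. Set $\rho_0=0$, $\rho_{\xi+1}=\sup\{\eta(\bar t)+1:ht(\bar t)=\rho_\xi\}$ (a supremum of fewer than $\kappa$ ordinals below $\kappa$, hence below $\kappa$), $\rho_\lambda=\sup_{\xi<\lambda}\rho_\xi$, and $\hat T_j=\{s\in T_j:ht_{T_j}(s)\in\{\rho_\xi:\xi<\kappa\}\}$ with the induced order. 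These are nice trees, and a leveled somewhere dense $(f\restriction\prod_j\hat T_j)$-monochromatic matrix, being $\bar t$-$(ht_{\hat T}(\bar t)+1)$-dense in $\langle\hat T_j\rangle$ for some level tuple $\bar t$, would be $\bar t$-$\rho_{\xi+1}$-dense (where $\xi=ht_{\hat T}(\bar t)$) and hence $\bar t$-$\eta(\bar t)$-dense for $f$ in $\langle T_j\rangle$, contradicting the choice of $\eta(\bar t)$. Thus $\neg DSHL$ yields a failure of $SDHL$, completing $(1)\Leftrightarrow(2)\Leftrightarrow(3)$ for $\kappa$ inaccessible.

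The remaining implication $(4)\Rightarrow(3)$ (equivalently $(4)\Rightarrow(1)$, via the speed-up applied above each node) is where weak compactness is essential, and I expect it to be the main obstacle. The obstruction is intrinsic: a somewhere dense monochromatic matrix $\bar D$ has its nodes scattered over many levels, and extending them to a common level — the obvious way to make $\bar D$ leveled — destroys all control over $f$, since $SDHL'$ says nothing about $f$ on a proper extension of a node of $\bar D$. The way around this is to iterate $SDHL'$, reapplying it above the nodes of $\bar D$, then above the nodes of the resulting matrices, and so on, noting that along any branch of the resulting tree of applications the colours stabilize (only $\delta<\kappa$ of them), and then to use weak compactness to convert the resulting ``monochromatic and dense at every level'' data into a single leveled matrix: concretely one works with an elementary embedding $j:M\to N$ with critical point $\kappa$ and $\langle T_j\rangle,f\in M\models SDHL'(d,\delta,\kappa)$, reading off the required level tuple from the level-$\kappa$ nodes of the trees $j(T_j)$ (alternatively, from the tree property together with $\kappa\to(\kappa)^2_{<\kappa}$). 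With $(3)\Rightarrow(4)$ already noted, this gives $(3)\Leftrightarrow(4)$ and hence the full equivalence for weakly compact $\kappa$.
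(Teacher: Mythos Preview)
The paper does not contain a proof of Theorem~\ref{EquiForm}: the result is quoted with attribution to Dobrinen and Hathaway \cite{DobrinenHathaway}, and the text moves directly on to stating Theorem~\ref{DHmainThm}. There is therefore nothing in this paper to compare your proposal against; any comparison would have to be with the original Dobrinen--Hathaway argument.

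That said, your outline is reasonable. The cycle $(1)\Rightarrow(2)\Rightarrow(3)\Rightarrow(1)$ via the dense-matrix recursion and the speed-up/contrapositive is the standard route and uses only inaccessibility, matching the ``in fact'' clause of the theorem. A couple of small points: in $(1)\Rightarrow(2)$ it is cleaner to root $T'_j$ at $t_j$ itself (so $a_0=ht(\bar t)$) and place the first colored level at $T'_j(1)\subseteq D^{ht(\bar t)+1}_j$; also note that for $HL$ you only need $f$ constant on $\bigcup_\xi\prod_j T'_j(\xi)$, so the root tuple need not itself receive color $\gamma$. In $(3)\Rightarrow(1)$ your monotonicity observation is the right one (if $\bar D$ is $\bar t$-$\eta$-dense then it is $\bar t$-$\eta'$-dense for all $\eta'\le\eta$), and you should check that $\{\rho_\xi:\xi<\kappa\}$ is closed so that the sped-up trees $\hat T_j$ are genuinely nice (in particular well-pruned at limits); this holds by your limit rule.

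The one place your proposal is genuinely incomplete is $(4)\Rightarrow(3)$. You correctly identify it as the crux and correctly locate the difficulty (an $SDHL'$ matrix is not leveled and extending destroys color information), but the sketch you give---iterating $SDHL'$ and then invoking an embedding $j:M\to N$ to read off a level tuple from level-$\kappa$ nodes of $j(T_j)$---is not yet an argument: you have not said what structure you build in $M$, nor how the node at level $\kappa$ in $j(T_j)$ yields a \emph{leveled} monochromatic matrix back in $V$. If you want a self-contained proof you will need to actually carry this out (or consult \cite{DobrinenHathaway}); as written this implication remains a gap.
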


\begin{thm}[Dobrinen and Hathaway \cite{DobrinenHathaway} Theorem 4.6]\label{DHmainThm}
Let $d\in \omega$ and let $\kappa<\lambda$ be cardinals such that $\lambda\to (\kappa)^d_\kappa$. Suppose further that $\kappa$ is measurable in the forcing extension by $Add(\kappa,\lambda)$. Then in $V$, $DSHL^{asym}(d,\delta, \kappa)$ holds for all $\delta<\kappa$.
\end{thm}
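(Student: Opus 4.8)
The plan is to run the Harrington-style forcing proof of the Halpern--L\"auchli theorem, adapted to a measurable cardinal, and then transfer the conclusion back to $V$ by absoluteness. Fix nice trees $\langle T_j:j<d\rangle$ and a coloring $f:\Pi_{j<d}T_j\to\delta$ with $\delta<\kappa$; the goal is to produce $\xi_0$, $\bar t\in\Pi_{j<d}T_j(\xi_0)$ and $\gamma<\delta$ witnessing $DSHL^{asym}$. The first point is that every candidate witness is absolute: a level matrix $\bar D\subseteq\Pi_{j<d}T_j(\xi)$ has size $<\kappa$, and since $\mathbb P:=Add(\kappa,\lambda)$ is $<\kappa$-closed it adds no new subsets of size $<\kappa$ of ground-model sets (and $\bar t,\gamma,\xi_0$ are themselves small). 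Hence $DSHL^{asym}(d,\delta,\kappa)$ -- all of whose quantifiers are effectively bounded by $\kappa$ -- is absolute between $V$ and $V^{\mathbb P}$, so it suffices to verify it in $V[G]$ for $G\subseteq\mathbb P$ generic. By hypothesis $\kappa$ is measurable in $V[G]$; fix a normal measure $U$ there and let $j:V[G]\to M\cong\mathrm{Ult}(V[G],U)$ be the ultrapower embedding, $\mathrm{crit}(j)=\kappa$.

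Work in $V[G]$. Since $\kappa$ is inaccessible $\kappa^{<\kappa}=\kappa$, and each $T_j$ is $<\kappa$-closed, perfect and of size $\kappa$, so the coordinatewise product $\Pi_{j<d}T_j$ is forcing-equivalent to $Add(\kappa,1)$; thus $\mathbb P$ is equivalent to the product of $\lambda$ copies of it with conditions of support $<\kappa$, and $G$ yields $\lambda$ mutually generic coherent branches $\langle b_\alpha:\alpha<\lambda\rangle$ through $\Pi_{j<d}T_j$. By mutual genericity $|\{\alpha<\lambda:(b_\alpha(ht(t)))_j\succeq t\}|=\lambda$ for every $t\in T_j$. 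Pushed into $M$, each $b_\alpha$ converges to a node $b_\alpha^{*}=j(b_\alpha)(\kappa)\in\Pi_{j<d}j(T_j)(\kappa)$. For $\bar\alpha\in[\lambda]^{d}$ set $n^{*}(\bar\alpha)_j=(b^{*}_{\alpha_j})_j$ and $c(\bar\alpha):=j(f)(n^{*}(\bar\alpha))<\delta$; for $\xi<\kappa$ set $n(\bar\alpha,\xi)_j=(b_{\alpha_j}(\xi))_j\in\Pi_{j<d}T_j(\xi)$. Since $n^{*}(\bar\alpha)=\sup_{\xi<\kappa}j(n(\bar\alpha,\xi))$ and (by normality) $n^{*}(\bar\alpha)$ is represented in $M$ by $\xi\mapsto n(\bar\alpha,\xi)$, the {\L}o\'s theorem gives that $c(\bar\alpha)$ is exactly the $U$-almost-everywhere value of $\xi\mapsto f(n(\bar\alpha,\xi))$; in particular $A_{\bar\alpha}:=\{\xi<\kappa:f(n(\bar\alpha,\xi))=c(\bar\alpha)\}\in U$.

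Next, homogenize. The coloring $c:[\lambda]^{d}\to\delta$ lies in $V[G]$; using $\kappa^{+}$-c.c.\ of $\mathbb P$ it is dominated by a ground-model map $[\lambda]^{d}\to\mathcal P(\delta)$ with $<\kappa$ values, to which $\lambda\to(\kappa)^{d}_{\kappa}$ applies in $V$; a further application of weak compactness of $\kappa$ inside $V[G]$ then yields $H\in[\lambda]^{\kappa}$ with $c''[H]^{d}=\{\gamma^{*}\}$ for a single $\gamma^{*}<\delta$. One in fact applies this to an enriched coloring that also records the configuration of the branches $b_{\alpha_j}$ below a fixed early level $\xi_0$ (only $<\kappa$ extra colors), so that homogeneity also pins a node $\bar t\in\Pi_{j<d}T_j(\xi_0)$ through which every $b_\alpha$, $\alpha\in H$, passes, and arranges a split $H=H^{0}<\dots<H^{d-1}$ whose blocks meet the density requirements described now. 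Fix $\eta\in(\xi_0,\kappa)$: $\bar t$-$\eta$-density of a level matrix only asks that the $<\kappa$ many nodes of $\Pi_{j<d}T_j(\eta)[\bar t]$ be covered, so for each $j<d$ and $s\in T_j(\eta)[t_j]$ pick a column $\alpha^{j}_{s}\in H^{j}$ whose branch passes through $s$; this names $<\kappa$ many increasing tuples $\bar\alpha$, so by $\kappa$-completeness of $U$ we may choose $\xi>\eta$ in $\bigcap A_{\bar\alpha}$, and then $\prod_{j<d}D_j$ with $D_j:=\{(b_{\alpha^{j}_{s}}(\xi))_j:s\in T_j(\eta)[t_j]\}$ is a $\bar t$-$\eta$-dense level matrix on which $f$ is constantly $\gamma^{*}$. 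Ranging over $\eta$ (and noting $\bar t$-$\eta'$-density implies $\bar t$-$\eta$-density for $\eta<\eta'$) verifies $DSHL^{asym}$ in $V[G]$, hence in $V$; the extra clause for $\gamma^{*}=0$ is obtained by running the same analysis with $\bar t$ forced to equal $\langle root(T_j):j<d\rangle$.

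I expect the genuine difficulty to be the density bookkeeping sketched in the third paragraph: arranging that the single homogeneous set $H$ is ``spread out'' enough below $\bar t$ that each target node of $\Pi_{j<d}T_j(\eta)[\bar t]$ is actually realized by one of its branches, so the extracted monochromatic matrices are genuinely dense rather than monochromatic on a thin set. This is where the precise design of the enriched coloring must be combined with the homogeneity of $Add(\kappa,\lambda)$ below a condition -- which restores, within the relevant block of columns, the abundance $|\{\alpha:b_\alpha\text{ passes through }t\}|=\lambda$ -- while everything else is routine manipulation of the normal measure plus the absoluteness reduction.
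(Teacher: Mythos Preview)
The paper does not prove this theorem: it is quoted as Theorem~4.6 of Dobrinen--Hathaway and used as a black box. The closest the paper comes is the proof of the tail cone result, Theorem~\ref{Tail cone homogeneity in full generality}, which runs a more elaborate Harrington-style argument under the stronger hypothesis $\lambda\to(\kappa)^{2d}_{2^\kappa}$ via Shelah's Lemma~\ref{Cleanup}. So there is no paper-proof to line up against; I can only comment on your sketch and on how it compares to that related argument.

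Your overall shape is right --- force with $Add(\kappa,\lambda)$ to get $\lambda$ generic tuples of branches, use the measure in the extension to assign an almost-everywhere color $\tau(\bar\alpha)$ to each $d$-tuple of columns, homogenize with the partition hypothesis, then read off dense monochromatic matrices --- and the downward absoluteness of $DSHL^{asym}$ from $V[G]$ to $V$ is fine, since the witnessing matrices have size $<\kappa$ and $\mathbb P$ is $<\kappa$-closed.

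The genuine gap is exactly where you locate it, but your proposed fix does not close it. Once you commit to a fixed $V[G]$, the branches $b_\alpha$ are concrete objects; after thinning to a homogeneous $H\in[\lambda]^{\kappa}$ there is no reason whatsoever that $\{(b_\alpha)_j:\alpha\in H^j\}$ meets every node of $T_j(\eta)[t_j]$ for arbitrary $\eta$, and enriching the coloring by the configuration below a fixed $\xi_0$ cannot control this for $\eta>\xi_0$. Invoking ``homogeneity of $Add(\kappa,\lambda)$ below a condition'' does not help: that is a statement about the forcing relation in $V$, not about a particular generic, and the abundance $|\{\alpha:b_\alpha\text{ through }t\}|=\lambda$ evaporates the moment you restrict to $\kappa$ columns chosen inside $V[G]$.

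The standard repair --- and this is precisely what the paper's tail-cone proof does --- is to stay in $V$ and work with names and conditions rather than with a fixed generic. One lets $\dot\tau(u)$, $u\in[\lambda]^d$, be $\mathbb P$-names for the almost-everywhere color, applies the partition relation in $V$ to a coloring of $[\lambda]^d$ built from these names, and obtains a homogeneous $E\in[\lambda]^{\kappa}$ together with an injective assignment $s\mapsto e_s\in E$ of tree nodes to columns. Density is then achieved by \emph{choosing} the condition: to hit a prescribed $s\in T_j(\eta)[t_j]$ one sets $p(e_s)(j)=s$, extends $p$ to decide the relevant $\dot\tau$'s and to force a level $\chi$ into the appropriate measure-one set, and reads the matrix off from the values $p(e_s)(j)\restriction\chi$. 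The freedom to specify $p(e_s)(j)$ is exactly what your in-$V[G]$ approach gives up, and it is what makes density go through; compare the role of $p'$ in~(\ref{maintaining}) and the construction of the $C_j$'s in the successor step of the paper's proof.
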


It is pointed out in \cite{DobrinenHathaway} that the hypothesis can be obtained starting from $\kappa$ $(\kappa+d$)-strong.

The notion of branches and the bounded topology on the branches will be useful in proving $d$-dimensional tail cone homogeneity implies the $d+1$-dimensional Halpern-Läuchli theorem.

\begin{definition}\label{density}
Given a nice tree $T$, and assume $T\subset \kappa$. Then $[T]$ denotes the collection of branches through $T$, namely $\{f \in \Pi_{\xi<\kappa} T(\xi): \forall \alpha<\beta<\kappa \  f(\alpha)\prec_T f(\beta)\}$. We can topologize $[T]$ with basic open sets $N^T_t=\{f\in [T]: t= f(ht_T(t))\}$ where $t\in T$. We call this topology $\tau$. For any $A\subset [T]$, we use $\exists^\tau x\in A$ to mean ``there exists a collection of elements in $A$ that is $\tau$-dense in $A$''.
\end{definition}

\section{The 1-dimensional asymmetric version of the Halpern-Läuchli theorem at a weakly compact cardinal for less than $\kappa$ many colors}\label{1-D}

In \cite{DobrinenHathaway}, $HL(1,k,\kappa)$ is established for any finite $k$ and any weakly compact $\kappa$. We pointed out that their proof works for any strongly inaccessible $\kappa$.

In the context of weakly compact cardinals, we are able to extend this result to fewer than $\kappa$ many colors.

\begin{thm}\label{WeakCompactness}
If $\kappa$ is weakly compact, then for all $\delta<\kappa$, $HL^{asym}(1,\delta,\kappa)$ holds.
\end{thm}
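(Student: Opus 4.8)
The plan is to prove $HL^{asym}(1,\delta,\kappa)$ directly, using weak compactness in the form of the tree property (or, more precisely, of $\Pi^1_1$-indescribability / the normal-filter characterization of weakly compact cardinals). The starting point is the known case $HL^{asym}(1,k,\kappa)$ for finite $k$, which holds already when $\kappa$ is strongly inaccessible. Fix a nice tree $T$ and a coloring $f\colon T\to\delta$ with $\delta<\kappa$. Since $\kappa$ is inaccessible, $|T(<\alpha)|<\kappa$ for each $\alpha<\kappa$, so $f$ restricted to each initial segment $T(<\alpha)$ uses only boundedly-many colors; the idea is to find a single club-or-stationary set of levels on which only one color is used along a suitable strong subtree, by reflecting the finite-color case down from the weak compactness of $\kappa$.

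First I would set up a reflection argument. Let $\theta$ be large and take an elementary submodel $M\prec H_\theta$ with $T, f \in M$, $|M|=\kappa$, $M\cap\kappa=\kappa$, and $M$ closed under ${<}\kappa$-sequences (possible since $\kappa^{<\kappa}=\kappa$ when $\kappa$ is inaccessible — or one builds a suitable $\kappa$-model for weak compactness). Using weak compactness, there is a transitive $N$ and an elementary embedding $j\colon M\to N$ with critical point $\kappa$. Then $j(f)\colon j(T)\to j(\delta)$, and $T = j(T)\restriction(\text{levels below }\kappa)$ while $j(\delta)$ is an ordinal ${\geq}\delta$ with $\sup j[\delta]$ possibly below $j(\delta)$. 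The key observation is: pick any node $t^* \in j(T)(\kappa)$ (a node on the $\kappa$-th level of $j(T)$, which exists since $j(T)$ is a nice tree of height $j(\kappa)>\kappa$ in $N$); its color $\gamma^* := j(f)(t^*) \in j(\delta)$. Now in $N$, consider the strong subtree of $j(T)$ living below $t^*$: for every level $\xi<\kappa$, the node $t^*\restriction\xi$ has color... — here one must be careful, as $f(t^*\restriction\xi)$ need not be constant. The actual route is to first apply the \emph{finite}-color theorem inside $M$ in a parametrized way: for each pair $(t,\gamma)$ with $t\in T$ and $\gamma<\delta$ one asks whether there is a strong subtree of $T[t]$ all of whose nodes (on its levels) get color $\gamma$.

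So the main step is a pigeonhole-plus-reflection: in $N$, the node $t^*$ on level $\kappa$ generates, via $j$, a ``blueprint'' of a strong subtree of $T$ below the root whose levels all get a single color $\gamma^*$ with $\gamma^* \in j[\delta]$ (one arranges $t^*$ to be the image of something, or uses that $j[\delta]\subseteq N$ is cofinal in the relevant part). Concretely: by elementarity it suffices to show $N\models$ ``there is a strong subtree $S$ of $j(T)[t]$ for some node $t$ on level $\kappa$, with all levels of $S$ getting color $\gamma$, $\gamma = j(f)(t)$''; pulling back along $j$ gives a strong subtree of $T$ with constant color, and if that color is $0$ one can moreover arrange the root to be $root(T)$ by re-running the argument in the ``$\gamma=0$'' branch of the finite-color statement $HL^{asym}(1,\delta,\kappa)$ restricted to the finitely-many colors actually appearing. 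Building the witness in $N$: start at $t^*$ on level $\kappa$; since $j(T)$ is perfect and ${<}j(\kappa)$-closed and $j(f)$ uses ${<}j(\kappa)$ colors, iterate the finite-color case $HL^{asym}$ (which $N$ believes at $j(\kappa)$) along an increasing $\kappa$-sequence of extensions of $t^*$, thinning at each stage to preserve one color — this is exactly where the finitely-many-colors theorem is invoked as a black box, applied cofinally often, and the ${<}\kappa$-closure of nice trees lets the construction pass through limits.

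The hard part will be controlling the color: a priori $j(f)$ restricted to the ${<}\kappa$-levels of $j(T)$ could take $\delta$-many (all) values with no single value appearing ``densely'', so the naive ``read off the color of $t^*$'' fails. The fix — and the technical heart of the argument — is to do the pigeonhole \emph{before} reflecting: inside $M$, use that for each $t\in T$ the set of colors $\gamma$ such that $T[t]$ contains a monochromatic-color-$\gamma$ strong subtree is nonempty (by $HL^{asym}(1,2,\kappa)$ applied to the two-coloring "is my color $= \gamma_0$ or not", iterated), define a function assigning to each $t$ such a good color, and then use weak compactness to reflect the \emph{tree} of attempts rather than $f$ itself; the $\Pi^1_1$-indescribability of $\kappa$ over $H_\theta$ (or just the tree property applied to a $\kappa$-tree coding partial strong subtrees and their single color) then yields a cofinal branch, which is the desired strong subtree with a single color $\gamma<\delta$, and the $\gamma=0$ clause follows by the same bookkeeping as in the finite case.
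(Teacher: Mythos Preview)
Your proposal cycles through several strategies (elementary embedding, iterating the two-color case, tree of partial attempts) without completing any, and each has a real obstacle you have not resolved. The most concrete route---iterate $HL^{asym}(1,2,\kappa)$ over all $\gamma<\delta$, at each step either winning with color $\gamma$ or passing to a strong subtree avoiding $\gamma$---breaks at limit stages: after $\lambda<\delta$ steps you have a strictly decreasing chain of strong subtrees and need a strong subtree inside their intersection; a fusion argument (freezing initial levels before shrinking) does not work here because the frozen nodes may themselves carry the very color you are trying to eliminate at that step. The ``tree of partial monochromatic strong subtrees'' approach hits the dual wall: to invoke the tree property you must first show this tree has height $\kappa$, i.e., that arbitrarily tall monochromatic partial strong subtrees exist---which is essentially what you are trying to prove, and you give no independent argument for it. You yourself flag the elementary-embedding idea as failing (``the naive `read off the color of $t^*$' fails''), and your proposed fix simply defers back to the other two approaches.

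The paper's argument avoids all of this by using weak compactness in the form: any family of $\leq\kappa$ subsets of $\kappa$ is decided by some $\kappa$-complete nonprincipal filter $\mathcal{F}$. One takes the family $A_{q,\gamma}=\{\alpha<\kappa:(\exists q'\in T(\alpha)[q])\, f(q')=\gamma\}$ for $q\in T$, $\gamma<\delta$. A short argument by contradiction (build a $\prec$-increasing $\delta$-chain $\langle q_\gamma\rangle$ with $A^c_{q_\gamma,\gamma}\in\mathcal{F}$, pass to an upper bound $q^*$, and by $\kappa$-completeness find a level $\beta>ht(q^*)$ on which no extension of $q^*$ receives any color) shows that above every node there exist $q$ and $\gamma$ with $A_{q',\gamma}\in\mathcal{F}$ for all $q'\succeq q$. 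From there the strong subtree is built level by level using $\kappa$-completeness again, and the asymmetric clause is handled by a two-case split on whether $\gamma=0$ already works from the root. Note that this is the one place where weak compactness genuinely enters; no reflection, no embedding, and no induction on the number of colors is needed.
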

\begin{proof}
Given $\delta<\kappa$ and a coloring $f: T\to \delta$ where $T$ is a nice tree, let $A_{q,\gamma}=\{\alpha<\kappa:  \exists q'\in T(\alpha)[q] \ f(q')=\gamma\}$ and consider the collection $\mathcal{X}=\{A_{q,\gamma} \subset \kappa: q\in T, \gamma<\delta\}$.

Notice that $|\mathcal{X}|\leq \kappa$, hence by weak compactness, there exists a non-principal $\kappa$-complete filter $\mathcal{F}$ on $\kappa$ that decides every member of $\mathcal{X}$, that is for each $A\in \mathcal{X}$, exactly one of $A$, $A^c$ is in $\mathcal{X}$. Notice that for any $\gamma\in \delta$ if $p \preceq  q$ then $A_{q,\gamma}\in \mathcal{F}$ implies $ A_{p,\gamma}\in \mathcal{F}$.  In other words, if $p\preceq q$ and $A_{p,\gamma}^c\in \mathcal{F}$, then $A_{q,\gamma}^c\in \mathcal{F}$.

We show the following claim:
\begin{claim}\label{ImportantClaim}
For each $p\in T$ there exist $\gamma\in \delta$ and $q \succeq p$ such that for all $q' \succeq q, \ A_{q',\gamma}\in \mathcal{F}$.
\end{claim}

\begin{proof}[Proof of the claim]
Suppose not, then there exists $p\in T$ such that for all $\gamma\in \delta$ and for all $q\succeq p$ there exists $q'\succeq q$ such that $A_{q',\gamma} \not\in \mathcal{F}$ (so $A_{q',\gamma}^c \in \mathcal{F}$).
Inductively we build a chain of nodes $\langle q_\gamma: \gamma<\delta\rangle$ such that
\begin{itemize}
\item for any $\alpha<\beta<\delta, q_{\alpha}\prec q_{\beta}$;
\item for any $\gamma\in \delta$, $A_{q_\gamma, \gamma}^c\in \mathcal{F}$.
\end{itemize}
Let $q_{-1}=p$. Inductively at stage $\alpha<\delta$ suppose we already have a $\langle q_{i}: i<\alpha\rangle $, let $q'$ be some upper bound for this sequence in $T$. By hypothesis, we can extend $q'$ to $q_\alpha$ such that $A_{q_\alpha,\alpha}^c\in \mathcal{F}$.
Hence the construction of such sequence is possible. Now let $q^*$ be some upper bound for $\langle q_{\alpha}: \alpha<\delta\rangle$. We know that for any $\gamma\in \delta,  A_{q^*, \gamma}^c\in \mathcal{F}$. By the $\kappa$-completeness of $\mathcal{F}$ we can find a $\beta\in \bigcap_{\gamma\in \delta} A^c_{q^*, \gamma}\backslash ht(q^*)+1$. On level $\beta$, for any $t\in T(\beta)[q^*]$ and any $\gamma<\delta$, $f(t)\neq \gamma$. However by the fact that the tree is well-pruned, there must exist some node on level $\beta$ extending $q^*$ which gets some color in $\delta$, which is a contradiction. This proves the claim.
\end{proof}

Let $p=root(T)$. We will pick $\gamma\in \delta$ and build a strong subtree $T'$ of color $\gamma$.
Consider the following cases:
\begin{enumerate}

\item[Case 1] For all $q'\in T[p]$, $A_{q', 0}\in \mathcal{F}$. Then $\gamma=0$ and $root(T)=root(T')$.

\item[Case 2] Case 1 is not true. Let $q^*\in T$ be such that $A_{q^*, 0}^c\in \mathcal{F}$. Note that this is also true for all extensions of $q^*$ in $T$. By Claim \ref{ImportantClaim}, we can find $q\in T[q^*]$ and $\gamma'\in \delta-\{0\}$ such that for all $q'\in T[q], \ A_{q',\gamma'}\in \mathcal{F}$. Let $q=root(T')$ and $\gamma=\gamma'$.
\end{enumerate}

With $root(T')$ and $\gamma$ we can recursively build a strong subtree $T'$ with level set $A=\{a_i: i<\kappa\}$ as follows.

At stage $\alpha+1$, assume we have constructed $T'(\leq \alpha)$. Enumerate the immediate successors of $T'(\alpha)$ in $T$ as $\{t_k: k<\beta\}$. Hence they lie on $T(a_\alpha+1)$. As $\mathcal{F}$ is $\kappa$-complete, we can find $\eta\in \bigcap_{k<\beta} A_{t_k, \gamma}\backslash a_\alpha+1$. Find extensions $t_k'$ of $t_k$ in $T(\eta)$ of color $\gamma$. Let $T'(\alpha+1)=\{t_k':k<\beta\}$ and $a_{\alpha+1}=\eta$.

At stage $\nu$ where $\nu$ is a limit, let $\{t_k: k<\beta\}$ enumerate nodes in $T(\bar{\nu})$ where $\bar{\nu}=\sup_{i<\nu} a_i$ such that each $t_k$ is on top of a branch through $T'(<\nu)$. Repeat the procedure as before to define $T'(\nu)$ and $a_\nu$.

It is easy to see from the construction that $T'$ is a strong subtree with level set $A$ that gets color $\gamma$ and if $\gamma$ is $0$, then $root(T')=root(T)$. 

\end{proof}

\begin{remark}\label{SimultaneousVersion}
We can use the same proof to get the following simultaneous/diagonal version under the same assumptions:

 For $\gamma<\kappa$, $\langle \delta_j<\kappa: j<\kappa\rangle$, a sequence of nice trees $\langle T_j: j<\kappa \rangle$ and a sequence of colorings $\langle f_j:  T_j\to \delta_j | \ j<\kappa\rangle$ there exists $A\in [\kappa]^\kappa$ enumerated increasingly as $\{a_i: i<\kappa\}$ such that for all $j<\kappa$, there exists a strong subtree $T_j'\subset T_j$ with witnessing level set $A\backslash a_j+1$ and $f_j\restriction T_j'$ is constant.

\end{remark}

\section{Tail cone homogeneity}\label{TailCone}

In this section, we establish the consistency of the following analog of a consequence of a typical fusion argument in the classical context. The following argument makes use of a combinatorial lemma due to Shelah (see \cite{MR1218224} and also a nice presentation in \cite{MR2520110}) who generalized a forcing argument by Harrington in the countable context.

\begin{thm}\label{Tail cone homogeneity in full generality}
Let $d\in \omega$, and let $\kappa, \lambda$ be cardinals such that
\begin{enumerate}
\item $\lambda$ satisfies $\lambda\to (\kappa)_{2^\kappa}^{2d}$ (when $d=1$, $\lambda=(2^\kappa)^+$ suffices);
\item $\kappa$ is inaccessible and is measurable in the forcing extension by $\mathbb{P}=Add(\kappa,\lambda)$.
\end{enumerate} 
 Then in $V$,
for any $\langle \delta_i<\kappa: i<\kappa\rangle$, $\langle T_j: j<d\rangle$ nice trees and $f_i: \Pi_{j<d}T_j\to \delta_i$ with $i<\kappa$, there exists a sequence of strong subtrees $\langle T'_j\subset T_j: j<d\rangle$ sharing the same witnessing level set such that for each $i<\kappa$, $\xi\geq i+1$, and $\bar{t}\in \Pi_{j<d}T'_j(\xi)$, $f_i(\bar{t})=f_i(\bar{t}\restriction' i+1)$. In other words, for each $i<\kappa$ and $\langle y_j: j<d\rangle\in \Pi_{j<d} T_j'(i+1)$, $f_i'' \bigcup_{i+1\leq \xi <\kappa} \Pi_{j<d} T'_j(\xi)[y_j]$ is a singleton.

\end{thm}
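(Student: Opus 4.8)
The plan is to obtain the measurable cardinal $\kappa$ together with a normal measure $U$ in the extension $V[G]$ by $\mathbb{P} = Add(\kappa,\lambda)$, pull back to $V$ via the fact that $\mathbb{P}$ is $\kappa^+$-cc and $<\kappa$-closed (so no new bounded subsets of $\kappa$, in particular no new nice trees or colorings), and run a Harrington-style forcing argument over $V$ using $Add(\kappa,\lambda)$ as the side forcing, exactly as in Shelah's proof of the Halpern--Läuchli theorem at $\kappa$ presented in \cite{MR1218224} and \cite{MR2520110}. Concretely, I would force with $\mathbb{P}$ to add $\lambda$-many ``generic branches'' through the level sets: for each $j < d$ the generic picks, coherently, a cofinal branch $b^j_\alpha$ through $T_j$ for each $\alpha < \lambda$, and one uses these branches as a device to homogenize the colorings $f_i$ simultaneously. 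Because each $f_i$ lives in $V$ and $\mathbb{P}$ is $<\kappa$-closed and $\kappa$ is measurable in $V[G]$, in $V[G]$ one can appeal to the normal measure to make level-by-level decisions about the colors seen along the generic branches.

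The key steps, in order, are: (1) set up the forcing $\mathbb{P} = Add(\kappa,\lambda)$ and, working in $V[G]$, fix a normal measure $U$ on $\kappa$; identify $\lambda$ conditions/branches in the generic object for each tree $T_j$, noting that every node of $T_j$ lies below $<\kappa$-many of these branches and that the branches are mutually generic in the appropriate sense. (2) For a fixed $i < \kappa$, analyze the color $f_i(\bar t)$ along a $d$-tuple of generic branches as a function of the level $\xi$; using the $\kappa$-completeness/normality of $U$ and the chain condition of $\mathbb{P}$, find in $V[G]$ a single color that is forced to occur densely, stabilizing $f_i$ on a ``tail cone'' above some finite stem $\bar t \restriction (i+1)$. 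Here is where the partition hypothesis $\lambda \to (\kappa)^{2d}_{2^\kappa}$ enters: it is used (as in Shelah's argument and its presentation in \cite{MR2520110}) to find, among the $\lambda$-many generic branch indices, a homogeneous set of size $\kappa$ on which the relevant $2d$-ary coloring (recording, for a pair of $d$-tuples of branch indices and their splitting levels, the color information) is constant; the $2d$ exponent reflects that a $d$-tuple of branches interacts with another $d$-tuple through their pairwise splitting pattern, and the number of colors $2^\kappa$ bounds the amount of information recorded. (3) Diagonalize over all $i < \kappa$ simultaneously: since there are only $\kappa$-many colorings and the forcing is $<\kappa$-closed, interleave the stabilization arguments — at stage $i$ one has already committed to finitely much of the strong subtrees below level $i+1$, and one extends to absorb the requirement coming from $f_i$, requiring homogeneity only above level $i+1$ for $f_i$, which is exactly the ``tail cone'' weakening that makes the simultaneous construction possible. (4) Finally, back in $V$: the homogeneous set of branch indices and the level set $A \in [\kappa]^\kappa$ obtained are, by the chain condition, covered by a ground-model set of size $<\kappa^+$, and a standard reflection/absoluteness argument (the statement ``there exist strong subtrees with the stated tail-cone homogeneity'' is $\Sigma_1$ over $V$ with parameters the trees and colorings, all in $V$, and is witnessed in $V[G]$) lets us conclude the strong subtrees exist in $V$ itself; one reads off $T'_j$ by restricting $T_j$ to the chosen branches along the levels in $A$.

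The main obstacle I anticipate is step (2)–(3): arranging a \emph{single} homogeneous branch-index set of size $\kappa$ that works for the full $d$-dimensional product and that is compatible with the diagonal requirement over all $i < \kappa$. One has to be careful that the color recorded by the $2d$-ary coloring fed into $\lambda \to (\kappa)^{2d}_{2^\kappa}$ genuinely captures everything needed to conclude $f_i(\bar t) = f_i(\bar t \restriction' (i+1))$ for all $i$ and all $\xi \geq i+1$ at once — this is why the coloring must encode, for each $i$, the ``answer'' of a name deciding the color along the branches, and why $2^\kappa$ colors are needed rather than $\kappa$. The bookkeeping tying together the normal measure in $V[G]$, the genericity of the branches, and the partition relation is delicate, but it is precisely the content of Shelah's lemma (\cite{MR1218224}, and the presentation in \cite{MR2520110}), which I would invoke in the form needed here; the dimension-$d$ and the many-colorings features are handled by the exponent $2d$ and the color bound $2^\kappa$ respectively, and the $d=1$ case needs only $\lambda = (2^\kappa)^+$ because a $2$-ary coloring on $\lambda$ with $2^\kappa$ colors is homogenized on a set of size $\kappa$ whenever $\lambda = (2^\kappa)^+$ by the Erdős–Rado theorem.
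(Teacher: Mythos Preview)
Your high-level plan is on the right track --- Harrington-style forcing, Shelah's combinatorial lemma, the measure in $V[G]$, and a diagonalization over the $\kappa$-many colorings --- and this is indeed the skeleton of the paper's proof. But there is a genuine gap in your step (4), and a related underestimation of what Shelah's lemma actually provides.

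The gap is the return to $V$. You propose to build the strong subtrees in $V[G]$ and then pull them back by ``$\Sigma_1$ absoluteness'' and chain-condition covering. This does not work: $Add(\kappa,\lambda)$ adds new subsets of $\kappa$, so a strong subtree (a size-$\kappa$ object) found in $V[G]$ need not lie in $V$, and the existence statement is not $\Sigma_1$ in any usable sense (the witness has size $\kappa$ and the homogeneity condition quantifies over all $\kappa$ levels). Covering the level set $A$ by a ground-model set of size $\kappa$ does not help either, since the \emph{choice of nodes} on each level is what matters. The paper never leaves $V$: the entire recursion of length $\kappa$ is carried out in the ground model, using names and the forcing relation. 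At each successor stage one extends a single condition $p''$ to decide the values $\dot{\tau}_\alpha(e_{\bar t})$, then further extends to $p'''$ forcing some $\chi$ into the intersection of the measure-one sets $\dot{A}^\beta_{e_{\bar t}}$, and then \emph{reads off actual ground-model nodes} $p'''(e_{t_j})(j)\restriction\chi \in T_j(\chi)$ as the next level of $T'_j$. The strong subtrees are produced in $V$ directly.

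Relatedly, you describe Shelah's lemma as merely producing a homogeneous $E\in[\lambda]^\kappa$ for a $2d$-ary coloring. The lemma gives much more: a function $W:[E]^{\le d}\to[\lambda]^{\le\kappa}$ with coherence properties (CL.1--CL.4 in the paper), in particular the copying isomorphisms $h_{W(u),W(v)}$ between the restricted forcings, preserving what is forced about $\dot{\tau}_i$, and the intersection property $W(u)\cap W(v)=W(u\cap v)$. These are what make the recursive construction go through: one maintains, for every level sequence $\bar u$ in the subtrees built so far, a condition $p_{\bar u}\in\mathbb{P}\restriction W(e_{\bar u})$, and the coherence of $W$ is exactly what guarantees that (i) the conditions on a fixed level are pairwise compatible, and (ii) copies of conditions from lower levels remain below the current ones. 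Without this structure there is no way to amalgamate the decisions for all $d$-tuples on a level into a single extendible condition, which is the heart of the argument. Your sketch omits this mechanism entirely, and with it the actual content of the proof.
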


\begin{remark} Use $HL^{tc}(d,<\kappa,\kappa)$ to denote the conclusion of the theorem, which we call \emph{tail cone homogeneity}.
If there exists $\delta<\kappa$ such that each coloring uses at most $\delta$ many colors, then the conclusion of the theorem is abbreviated as $HL^{tc}(d,\delta,\kappa)$.

\end{remark}

\begin{remark}
$HL^{tc}(d,<\omega,\omega)$ can be proved using $HL(d,<\omega, \omega)$ combined with a fusion type argument. Note also that, since $\kappa=\omega$ satisfies the hypothesis in Theorem \ref{Tail cone homogeneity in full generality}, our proof yields $HL^{tc}(d,<\omega,\omega)$ in ZFC.
\end{remark}

Without loss of generality we may assume $T_j, j<d$ are mutually disjoint. $\mathbb{P}$ consists of partial functions $p: \lambda \to \Pi_{j<d} T_j$ such that $|support(p)|<\kappa$. $\mathbb{P}$ is ordered by the following: $q\leq_\mathbb{P} p$ (namely, $q$ is stronger than $p$) iff $support(q)\supset support(p)$ and for each $\gamma\in support(p)$, $q(\gamma)(j)\succeq_{T_j} p(\gamma)(j)$ for any $j<d$. What this forcing is doing is to add $\lambda$ generic tuples of branches through $\Pi_{j<d} T_j$. For each $\eta<\lambda$, let $\dot{c}_\eta$ be the $\mathbb{P}$-name for the generic $d$-tuple of branches at $\eta$-th coordinate, namely, in $V[G]$, $(\dot{c}_\eta)_G = G(\eta)\in \Pi_{j<d} [T_j]$.

We claim that $\mathbb{P}$ is forcing equivalent to $Add(\kappa, \lambda)$. To see this, since $\mathbb{P}$ is the $<\kappa$-support product of $\lambda$ many copies of $\Pi_{j<d} T_j$ with the coordinate-wise tree order, it suffices to show each copy in the product is equivalent to $Add(\kappa,1)$. But each of these forcings is separative, $\kappa$-closed, of cardinality $\kappa$, and hence is equivalent to $Add(\kappa, 1)$. For a proof see \cite{MR2768691} for example.  Notice that for any subset $A\subset \mathbb{P}$ of cardinality $<\kappa$ such that conditions in $A$ are pairwise compatible, then there exists in $\mathbb{P}$ a greatest lower bound for elements in $A$.

\begin{definition}
For $B \subset \lambda$, $p\in \mathbb{P}$, define $\mathbb{P}\restriction B$ to be $\{p\in \mathbb{P}: support(p)\subset B\}$ and $p\restriction B $ is the condition that agrees with $p$ on $B$ and whose support is contained in $B$.
\end{definition}

\begin{definition}
Given $W_0, W_1\subset \lambda$ of the same order type, let $h_{W_0, W_1}$ be the unique order preserving map from $W_0$ to $W_1$. We can then induce a copying action $h_{W_0, W_1}^\mathbb{P}: \mathbb{P}\restriction W_0 \to \mathbb{P}\restriction W_1$ such that $\forall p\in \mathbb{P}\restriction W_0, j\in W_0  \  h_{W_0, W_1}^\mathbb{P} (p)(h_{W_0, W_1}(j))=p(j)$. With a slight abuse of notation, we write $h_{W_0, W_1}^\mathbb{P}$ as $h_{W_0, W_1}$ since it can be easily inferred from the context.
\end{definition}

The following combinatorial lemma plays a key role in the construction.

\begin{lem}[Lemma 4.1 in \cite{MR1218224} and Claim 7.2.a in \cite{MR2520110} with slight modifications, see the remarks that follow]\label{Cleanup}
Suppose $\lambda \to (\kappa)_{2^\kappa}^{2d}$ and $\langle \delta_i: i<\kappa\rangle$ is a sequence of cardinals $<\kappa$. Let $\langle \dot{\tau}_i(u): i<\kappa, u\in [\lambda]^d \rangle$ be a collection of $\mathbb{P}$-names such that each $\dot{\tau}_i(u)$ is a name for an ordinal $<\delta_i$. Then there exists $E\in[\lambda]^\kappa$ and $W: [E]^{\leq d}\to [\lambda]^{\leq \kappa}$ such that 
\begin{enumerate}[label=\textbf{CL.\arabic*}]
\item \label{CL1} For all $u\in [E]^d$, $u\subset W(u)$ and $\mathbb{P}\restriction W(u)$ contains a maximal antichain deciding the value of $\dot{\tau}_i(u)$ for each $i<\kappa$.
\item \label{CL2} For any $u,v\in [E]^d$, $type(W(u))=type(W(v))$, $h_{W(u), W(v)}(u)=v$ and for any $p\in \mathbb{P}\restriction W(u)$, for any $i<\kappa$ and $\gamma<\delta_i$, $p\Vdash \dot{\tau}_i(u)=\gamma \Leftrightarrow h_{W(u), W(v)}(p)\Vdash \dot{\tau}_i(v)=\gamma$.
\item \label{CL3}  For any $u, v \in [E]^{d}$, $W(u)\cap W(v)=W(u\cap v)$.
\item \label{CL4}  For any $u_1\subset u_2, u'_1\subset u'_2 $ where $u_2, u_2'\in [E]^{d}$, if $(u_2, u_1,<)\simeq (u_2', u_1', <)$, then $W(u_1')=h_{W(u_2), W(u_2')}( W(u_1))$.

\end{enumerate}
\end{lem}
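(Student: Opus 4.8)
The plan is to carry out Shelah's ``clean-up'' argument (Lemma~4.1 of \cite{MR1218224}, Claim~7.2.a of \cite{MR2520110}) in the present formulation. Since $\kappa$ is inaccessible, $\kappa^{<\kappa}=\kappa$ and $\mathbb{P}=Add(\kappa,\lambda)$ has the $\kappa^+$-chain condition. First I would \emph{localize} the names: for each $u\in[\lambda]^d$ and $i<\kappa$ fix a maximal antichain $A^i_u\subseteq\mathbb{P}$ of size $\le\kappa$ all of whose conditions decide $\dot\tau_i(u)$, and put $s(u)=u\cup\bigcup_{i<\kappa}\bigcup_{p\in A^i_u}\mathrm{support}(p)\in[\lambda]^{\le\kappa}$. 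Using the product decomposition $\mathbb{P}\cong(\mathbb{P}\restriction s(u))\times(\mathbb{P}\restriction(\lambda\setminus s(u)))$, each $A^i_u$ is a maximal antichain of $\mathbb{P}\restriction s(u)$, hence of $\mathbb{P}\restriction W$ for any $W\supseteq s(u)$, and $\dot\tau_i(u)$ is, modulo $\Vdash_\mathbb{P}$, a $\mathbb{P}\restriction s(u)$-name; so taking $W(u)\supseteq s(u)$ will immediately yield \ref{CL1}.

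Next I would build a \emph{canonical closure} $W_0\colon[\lambda]^{\le d}\to[\lambda]^{\le\kappa}$ by recursion on $|u|$: fixing in advance a well-ordering of $[\lambda]^{\le\kappa}$ and a structure on $\lambda$ with definable Skolem functions rich enough to see all of the above data, let $W_0(u)$ be the least set of size $\le\kappa$ that contains $u\cup s(u)\cup\bigcup_{v\subsetneq u}W_0(v)$ and is closed under those Skolem functions; then $W_0$ is $\subseteq$-monotone. Attach to each $u$ its \emph{template} $\mathrm{tp}(u)$: the object obtained by transporting $\big(u,\ \langle W_0(v):v\subseteq u\rangle,\ \langle A^i_u,\dot\tau_i(u):i<\kappa\rangle\big)$ along the order isomorphism $e_u$ of $W_0(u)$ onto $\rho_u=\mathrm{otp}(W_0(u))\le\kappa$. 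Since each $A^i_u$ has size $\le\kappa$ in the poset $\mathbb{P}\restriction\rho_u$ of size $\le\kappa$ and supports have size $<\kappa$, $\mathrm{tp}(u)$ is an object of size $\le\kappa$. Now color $[\lambda]^{2d}$ by sending $w=\{\xi_0<\cdots<\xi_{2d-1}\}$ to the (finitely many) coordinates listing, for each pair $S,S'\subseteq\{0,\dots,2d-1\}$ with $|S|,|S'|\le d$ and $u_S=\{\xi_j:j\in S\}$, the template $\mathrm{tp}(u_S)$ together with the overlap pattern of $W_0(u_S)$ and $W_0(u_{S'})$ (the order type of $W_0(u_S)\cup W_0(u_{S'})$ with $W_0(u_S)$, $W_0(u_{S'})$, $u_S$, $u_{S'}$ marked). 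This uses at most $2^\kappa$ colors, and with a standard padding a homogeneous set is also homogeneous for the induced colorings of $[\lambda]^m$, $m\le 2d$. Apply $\lambda\to(\kappa)^{2d}_{2^\kappa}$ to get $E\in[\lambda]^\kappa$ homogeneous for this coloring, and set $W=W_0\restriction[E]^{\le d}$.

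It then remains to read off \ref{CL1}--\ref{CL4}. \ref{CL1} is the first paragraph. For \ref{CL2}, homogeneity makes $\mathrm{tp}(u)$ constant on $[E]^d$, so for $u,v\in[E]^d$ the maps $e_u,e_v$ identify $\rho_u=\rho_v$, carry $u$ and $v$ to the same subset, and carry $\dot\tau_i(u)$ and $\dot\tau_i(v)$ to the same $\mathbb{P}\restriction\rho_u$-name; hence $h_{W(u),W(v)}=e_v^{-1}\circ e_u$ sends $u$ to $v$ and is an isomorphism $\mathbb{P}\restriction W(u)\to\mathbb{P}\restriction W(v)$ taking $\dot\tau_i(u)$ to $\dot\tau_i(v)$, which gives the stated forcing equivalence. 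For \ref{CL3}, ``$\supseteq$'' is monotonicity of $W_0$, while ``$\subseteq$'' follows by applying homogeneity of the overlap-pattern coordinate at $u\cup v$: it pins $W(u)\cap W(v)$ down to a set determined by that pattern, which by inspecting the configurations $u\cap v\subseteq u$ and $u\cap v\subseteq v$ inside the templates must be $W(u\cap v)$. For \ref{CL4}, given $(u_2,u_1,<)\simeq(u_2',u_1',<)$, embed both pairs into a common element of $[E]^{\le 2d}$; homogeneity of the coordinates recording the position of $W_0(u_1)$ in $W_0(u_2)$ (resp.\ of $W_0(u_1')$ in $W_0(u_2')$) shows $h_{W(u_2),W(u_2')}$ restricts to the order isomorphism $W(u_1)\to W(u_1')$, i.e.\ $W(u_1')=h_{W(u_2),W(u_2')}(W(u_1))$.

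The delicate point, which I expect to be the main obstacle, is the exactness in \ref{CL3} (and correspondingly in \ref{CL4}): one must arrange the canonical closure $W_0$ tightly enough that, \emph{after} homogenizing, $W(u)\cap W(v)$ collapses all the way down to $W(u\cap v)$ rather than merely containing it. Controlling the relative position of $W_0(u)$ and $W_0(v)$ is a property of the $(\le 2d)$-element set $u\cup v$, which is exactly why the hypothesis is stated with exponent $2d$ rather than $d$; when $d=1$, $u\cup v$ has at most two elements and a direct $\Delta$-system style argument recovers the conclusion from just $\lambda=(2^\kappa)^+$.
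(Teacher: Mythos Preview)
Your outline correctly captures the architecture (localize the names via $\kappa^+$-c.c., package the local data into templates of size $\le\kappa$, color $[\lambda]^{2d}$, homogenize), and you rightly flag \ref{CL3} as the crux. But the argument you give for \ref{CL3} has a genuine gap. Knowing that the overlap pattern of $(W_0(u),W_0(v))$ is constant on the homogeneous set tells you that $W_0(u)\cap W_0(v)$ has a fixed order type and a fixed position inside $W_0(u)\cup W_0(v)$; it does \emph{not} force that intersection to collapse to $W_0(u\cap v)$. It is perfectly consistent with homogeneity that $W_0(u)\cap W_0(v)\supsetneq W_0(u\cap v)$ \emph{systematically}, for every pair $u,v$ of a given shape. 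Your phrase ``by inspecting the configurations \dots\ must be $W(u\cap v)$'' is precisely the step that needs an additional idea, and no amount of staring at the templates supplies it.

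The paper's proof (Appendix~\ref{AddedProof}) resolves this not by a cleverer coloring but by a \emph{second stage} of the construction. One first obtains, essentially via your homogenization, a preliminary $E'\in[\lambda]^\kappa$ and $W'\colon[E']^{\le d}\to[\lambda]^{\le\kappa}$ satisfying the analogues of \ref{CL1}, \ref{CL2}, monotonicity, and a uniformity for finite families (Fact~\ref{preprocessed}); this is roughly your $W_0$. One then \emph{redefines}
\[
W(u)=\bigcup\Bigl\{\,\textstyle\bigcap_{v\in X}W'(v)\ :\ X\subseteq[E']^{\le d},\ \bigcap X\subseteq u\,\Bigr\},
\]
and thins $E'$ to $E=\{\gamma_{\omega\nu}:\nu<\kappa\}$, i.e.\ to elements of $E'$ with plenty of room between consecutive members. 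The intersection formula is exactly what manufactures \ref{CL3}: if $\xi\in W(u)\cap W(v)$, say $\xi\in\bigcap_{w\in X}W'(w)$ with $\bigcap X\subseteq u$ and $\xi\in\bigcap_{w\in Y}W'(w)$ with $\bigcap Y\subseteq v$, then $\xi\in\bigcap_{w\in X\cup Y}W'(w)$ and $\bigcap(X\cup Y)\subseteq(\bigcap X)\cap(\bigcap Y)\subseteq u\cap v$, so $\xi\in W(u\cap v)$. The thinning to $E$ is what makes \ref{CL4} go through: one must choose, for each representative family $X_p$ of an isomorphism type over $u_2$, a matching family $X'_p$ over $u'_2$ with the right joint pattern, and this requires unused elements of $E'$ between consecutive elements of $E$ (see Figure~\ref{Demo}). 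Your one-stage proposal recovers what the paper calls $W'$, but omits this second stage; without the intersection formula \ref{CL3} simply does not follow. Your remark on $d=1$ via a $\Delta$-system at $\lambda=(2^\kappa)^+$ matches the paper.
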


\begin{remark}
Lemma 4.1 appeared in \cite{MR1218224} where for each $u\in [\lambda]^d$ the size of $\{\dot{\tau}_i(u): i<\kappa\}$ is at most countable while in Claim 7.2.a in \cite{MR2520110} the requirement (4) is not present and there exists a uniform upper bound $\delta<\kappa$ for $\langle \delta_i: i<\kappa\rangle$. However the lemma with these modifications still follow from the proof presented in \cite{MR2520110}. For the sake of completeness, we include a sketch of this lemma in the Appendix \ref{AddedProof}.
We point out that when $d=1$, $\lambda= (2^\kappa)^+$ suffices. Note that $(2^\kappa)^+$ is smaller than the least cardinal $\lambda$ satisfying $\lambda \to (\kappa)_{2^\kappa}^2$. 

\end{remark}

\begin{proof}[Proof of Theorem \ref{Tail cone homogeneity in full generality}]

Recall that for each $\xi<\lambda$, $\dot{c}_\xi$ is a name for the $\xi$-th generic tuple of branches through $\Pi_{j<d} T_j$.
Fix $\{u_0<\cdots < u_{d-1}\}=u\in [\lambda]^d$ and $i<\kappa$. In $V[G]$, let $D$ be a $\kappa$-complete ultrafilter on $\kappa$. 
Hence there exists a unique color $\tau_i(u)<\delta_i$
such that
 \begin{equation}\label{property}
 A_u^i :=\{\alpha<\kappa: f_i(c_{u_0}(0)\restriction \alpha, \cdots, c_{u_{d-1}}(d-1)\restriction \alpha)=\tau_i(u)\}\in D.
\end{equation}

 Let $\dot{D}$, $\dot{\tau}_i(u)$ and $\dot{A}^i_u$ be the respective names such that the relevant facts above are forced by $1_\mathbb{P}$.

Apply Lemma \ref{Cleanup} to get the desired $E$ and $W$ with respect to $\langle \dot{\tau}_i(u): i<\kappa, u\in [\lambda]^d\rangle$. Let an injective association $\{e_s\in E: s\in \bigcup_{j<d} T_j\}$ be such that: 

\begin{equation}\label{ordertype}
\text{for each } \xi<\kappa, j_0< j_1<d, s_{0}\in T_{j_0}(\xi) \text{ and } s_{1}\in T_{j_1}(\xi),  \text{we have } e_{s_{0}}<e_{s_{1}}.
\end{equation}

Given any $\bar{u}\subset \bigcup_{j<d} T_j$, we use $e_{\bar{u}}$ to denote $\{e_{s}:  s\in u\}$. If $\bar{u}\in \Pi_{j<d} T_j$, then use $e_{\bar{u}}$ to denote $\{e_{s}: \text{for some } j<d,  u_j=s\}$.

We will recursively construct the desired strong subtrees $T_j'\subset T_j, j<d$ with the same witnessing level set $A=\{a_i: i<\kappa\}$. Along the way we will define $\{p_{\bar{u}}\in \mathbb{P}\restriction W(e_{\bar{u}}): \bar{u}\in \bigcup_{i<\kappa}\Pi_{j<d} T'_j(i)\}$ and $\{j_{i,\bar{v}}<\delta_i: \bar{v}\in \Pi_{j<d} T'_j(i+1), i<\kappa\}$ maintaining the following construction invariants: 
\begin{enumerate}[label=\textbf{CI.\arabic*}]
\item \label{1} For any $i<\kappa, \bar{v}\in \Pi_{j<d}T'_j(i+1)\subset \Pi_{j<d}T_j(a_{i+1})$, $p_{\bar{v}} \Vdash \dot{\tau}_i(e_{\bar{v}})=j_{i,\bar{v}}$  and  for any level sequence $\bar{v}'\in \Pi_{j<d}T'_j$ with $\bar{v} \prec \bar{v}'$, $p_{\bar{v}'} \Vdash \dot{\tau}_i (e_{\bar{v}'})=j_{i, \bar{v}}$ and $f_i(\bar{v}')=j_{i,\bar{v}}$;
\item \label{2} For any $i<\kappa$, $\bar{u}\in \Pi_{j<d}T'_j(i)$, for each $j<d$, $p_u(e_{u_j})(j)=u_j$ and $(\forall z\in dom(p_{\bar{u}})) (\forall j<d) \ ht_{T_j}(p_{\bar{u}}(z, j))\leq a_i$ (\emph{We often times write $p_{\bar{u}}(a)(b)$ as $p_{\bar{u}}(a,b)$ by currying}); 
\item \label{3} For any $i<\kappa$, $\{p_{\bar{u}}: \bar{u}\in \Pi_{j<d}T'_j(i)\}$ consists of pairwise compatible conditions, hence has a greatest lower bound;
\item  \label{4} For any level sequences $\bar{u}, \bar{u}'$ in $\Pi_{j<d} T_j'$ with $\bar{u}\prec \bar{u}'$, $p_{\bar{u}'} \leq_{\mathbb{P}}  h_{W(e_{\bar{u}}), W(e_{\bar{u}'})}(p_{\bar{u}})$.
\end{enumerate}

The \emph{key point} of this construction is that by \ref{1} we are ensuring that for any $i<\kappa$, any level sequence $\bar{v}\in \Pi_{j<d} T_j'(i+1)$, any level sequence $\bar{v}'$ above $\bar{v}$ in $\Pi_{j<d} T_j'$ gets color $j_{i,\bar{v}}$ under $f_i$. This verifies the desired tail cone homogeneity we are asking for. The rest of the invariants are technical and are intended for the induction to go through.

To start the construction, let $root(T_j')=root(T_j)$ for all $j<d$ and $a_0=0$.

\textbf{Stage $\alpha+1$}: Suppose we have defined 
$T'(\leq \alpha)$ and $\langle a_i: i\leq \alpha\rangle$.
For each $j<d$ let $B_j=\{t^j_k: k<\beta_j\}$ enumerate the immediate successors of the nodes on $T_j'(\alpha)$ in $T_j$. Thus $B_j$ is a subset of $T_j(a_\alpha+1)$.

We remind the reader of Notation \ref{restriction} as restrictions with respect to different trees will be taken in the following construction.

Let $\bar{t}\in \Pi_{j<d} B_j$ and $\bar{s}\in \Pi_{j<d} T'_j(\alpha)$ such that $\bar{s}\prec \bar{t}$.

\begin{claim}\label{3.10}
$\{h_{W(e_{\bar{s}\restriction' \beta}), W(e_{\bar{t}})}(p_{\bar{s}\restriction' \beta}): \beta \leq \alpha\}$ consists of pairwise compatible conditions, and in fact it is decreasing as $\beta$ increases.
\end{claim}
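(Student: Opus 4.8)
The plan is to read the claim off construction invariant \ref{4}, the order‑type homogeneity of the sets $W(u)$ supplied by \ref{CL2}, and the functoriality of the copying action. By the inductive hypothesis at stage $\alpha+1$ we already have all the conditions $p_{\bar{u}}$ for level sequences $\bar{u}\in \Pi_{j<d}T'_j(i)$ with $i\leq \alpha$, each lying in $\mathbb{P}\restriction W(e_{\bar{u}})$, so every object named in the claim is defined.

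First I would observe that for every level sequence $\bar{u}\in \Pi_{j<d}T_j$ the set $e_{\bar{u}}=\{e_{u_j}:j<d\}$ is a genuine $d$‑element subset of $E$: the association $s\mapsto e_s$ is injective and the coordinates $u_j$ lie in the pairwise disjoint trees $T_j$, so $e_{\bar{u}}\in [E]^d$. Hence by \ref{CL2} all the sets $W(e_{\bar{u}})$ occurring in the claim have one and the same order type, and therefore each map $h_{W(e_{\bar{u}}), W(e_{\bar{v}})}$ used below is a genuine isomorphism of $\mathbb{P}\restriction W(e_{\bar{u}})$ onto $\mathbb{P}\restriction W(e_{\bar{v}})$, in particular order preserving. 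I would also record the functoriality identity $h_{W_1,W_2}\circ h_{W_0,W_1}=h_{W_0,W_2}$, which is immediate since the underlying order‑isomorphisms of the index sets compose to the order‑isomorphism $W_0\to W_2$ and the copying action is defined coordinatewise.

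Now fix $\beta<\beta'\leq \alpha$. Since $\bar{s}\restriction' \beta\prec \bar{s}\restriction' \beta'$ are level sequences in $\Pi_{j<d}T'_j$, invariant \ref{4} gives
\[
p_{\bar{s}\restriction' \beta'}\ \leq_{\mathbb{P}}\ h_{W(e_{\bar{s}\restriction' \beta}),\, W(e_{\bar{s}\restriction' \beta'})}(p_{\bar{s}\restriction' \beta}).
\]
Applying the order‑preserving isomorphism $h_{W(e_{\bar{s}\restriction' \beta'}),\, W(e_{\bar{t}})}$ to both sides and collapsing the composition on the right via functoriality yields
\[
h_{W(e_{\bar{s}\restriction' \beta'}),\, W(e_{\bar{t}})}(p_{\bar{s}\restriction' \beta'})\ \leq_{\mathbb{P}}\ h_{W(e_{\bar{s}\restriction' \beta}),\, W(e_{\bar{t}})}(p_{\bar{s}\restriction' \beta}),
\]
which is exactly the asserted monotonicity in $\beta$; pairwise compatibility is then automatic, since a $\leq_{\mathbb{P}}$‑decreasing family is trivially pairwise compatible. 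I do not expect a real obstacle here: the only thing needing attention is the bookkeeping on order types and domains that makes the copying maps literal isomorphisms and lets them compose, and this is precisely what \ref{CL2} was arranged to deliver.
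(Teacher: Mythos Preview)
Your argument is correct and is essentially the same as the paper's: invoke invariant \ref{4} for the pair $\bar{s}\restriction'\beta\prec \bar{s}\restriction'\beta'$, then push forward by the order-preserving copying map into $W(e_{\bar t})$ and collapse the composition. If anything, you spell out the general case $\beta<\beta'\leq\alpha$ explicitly, whereas the paper only writes the case $\beta'=\alpha$, but the idea is identical.
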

\begin{proof}[Proof of the claim]
Notice by \ref{4} of the construction invariants we have for any $\beta<\alpha$, $p_{\bar{s}}\leq h_{W(e_{\bar{s}\restriction' \beta}), W(e_{\bar{s}})}(p_{\bar{s}\restriction' \beta})$, hence 
\begin{equation*}
h_{W(e_{\bar{s}}), W(e_{\bar{t}})}(p_{\bar{s}}) \leq h_{W(e_{\bar{s}}), W(e_{\bar{t}})} (h_{W(e_{\bar{s}\restriction' \beta}), W(e_{\bar{s}})}(p_{\bar{s}\restriction' \beta}))=h_{W(e_{\bar{s}\restriction' \beta}), W(e_{\bar{t}})}(p_{\bar{s}\restriction' \beta}).
\end{equation*}

\end{proof}

\begin{claim}\label{3.11}
$C=\{h_{W(e_{\bar{s}}), W(e_{\bar{t}})}(p_{\bar{s}}): \bar{t}\in \Pi_{j<d} B_j, \bar{s}=\bar{t}\restriction a_{\alpha}\}$ consists of pairwise compatible conditions.
\end{claim}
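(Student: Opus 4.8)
The plan is to reduce the general pairwise-compatibility statement to the special case already handled by Claim \ref{3.10} together with the coherence properties of the copying maps supplied by Lemma \ref{Cleanup}, in particular \ref{CL3} and \ref{CL4}. So take two elements of $C$, coming from level sequences $\bar{t}, \bar{t}' \in \Pi_{j<d} B_j$ with respective restrictions $\bar{s} = \bar{t}\restriction' a_\alpha$ and $\bar{s}' = \bar{t}'\restriction' a_\alpha$ in $\Pi_{j<d} T'_j(\alpha)$; we must show $h_{W(e_{\bar s}), W(e_{\bar t})}(p_{\bar s})$ and $h_{W(e_{\bar s'}), W(e_{\bar t'})}(p_{\bar s'})$ are compatible. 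First I would locate the ``meet'' level sequence $\bar{r}\in \Pi_{j<d}T'_j$, namely the longest common initial segment of $\bar t$ and $\bar t'$ along each coordinate (using that $T'_j(\alpha)$ sits inside a single tree $T_j$ and the $B_j$'s branch off nodes of $T'_j(\alpha)$); note $\bar r \preceq \bar s, \bar s'$ and $\bar r = \bar t \restriction' \beta = \bar t' \restriction' \beta$ for the appropriate $\beta \le \alpha$. By Claim \ref{3.10} applied to $\bar t$ and (separately) to $\bar t'$, the images $h_{W(e_{\bar r}), W(e_{\bar t})}(p_{\bar r})$ and $h_{W(e_{\bar r}), W(e_{\bar t'})}(p_{\bar r})$ lie above our two target conditions (they extend them via \ref{4}, run through $\bar r \prec \bar s$ and $\bar r \prec \bar s'$). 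So it suffices to show these two images of the \emph{single} condition $p_{\bar r} \in \mathbb{P}\restriction W(e_{\bar r})$ are compatible in $\mathbb{P}$.

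The core point is then that two copies $h_{W(e_{\bar r}), W(e_{\bar t})}(p_{\bar r})$ and $h_{W(e_{\bar r}), W(e_{\bar t'})}(p_{\bar r})$ of the same condition agree on the intersection of their supports. Here I would invoke \ref{CL3}: $W(e_{\bar t}) \cap W(e_{\bar t'}) = W(e_{\bar t} \cap e_{\bar t'}) = W(e_{\bar r})$ — using that the association $s\mapsto e_s$ is injective so that $e_{\bar t}\cap e_{\bar t'} = e_{\bar r}$ exactly when $\bar t, \bar t'$ share the coordinatewise initial segment $\bar r$ — and \ref{CL4}, which says the copying maps commute with the $W$-structure, so that $h_{W(e_{\bar r}), W(e_{\bar t})}$ and $h_{W(e_{\bar r}), W(e_{\bar t'})}$ restricted to $W(e_{\bar r})$ are \emph{both} the identity-on-$W(e_{\bar r})$-composed-appropriately, i.e. they agree on the overlap $W(e_{\bar r})$. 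Concretely: for $\gamma \in W(e_{\bar t}) \cap W(e_{\bar t'}) = W(e_{\bar r})$, both copies assign to $\gamma$ the value $p_{\bar r}(\gamma)$ (after checking, via \ref{CL4} and the fact that $e_{\bar r} \subseteq e_{\bar t}, e_{\bar t'}$, that $h$ fixes $W(e_{\bar r})$ pointwise on supports). Hence the two conditions have a common lower bound, namely the condition whose support is $W(e_{\bar t}) \cup W(e_{\bar t'})$ and which equals each copy on its respective support — well-defined precisely because they agree on the overlap.

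The main obstacle I anticipate is bookkeeping the interaction between the combinatorial ``meet'' of the two level sequences and the set-theoretic intersection of the index sets $W(e_{\bar t}), W(e_{\bar t'})$: one has to be careful that $e_{\bar t} \cap e_{\bar t'}$ is genuinely $e_{\bar r}$ and not something smaller (this uses injectivity of $s \mapsto e_s$ and the order-type condition \eqref{ordertype}), and that the hypotheses of \ref{CL4} are literally met, i.e. $(e_{\bar t}, e_{\bar r}, <) \simeq (e_{\bar t'}, e_{\bar r}, <)$, which follows because all the relevant nodes of $T_j$ live on the same two levels $a_\alpha, a_\alpha+1$ and \eqref{ordertype} pins down the order type of $e_{\bar t}$ and $e_{\bar t'}$ uniformly. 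Once that is in place, \ref{CL3} and \ref{CL4} do the work and the compatibility is immediate; the verification that $C$ has pairwise compatible elements is then complete, and in fact this also prepares the ground for extracting a greatest lower bound of $C$ in the next step of the construction, using the $<\kappa$-closure remark about $\mathbb{P}$ noted after the forcing equivalence with $Add(\kappa,\lambda)$.
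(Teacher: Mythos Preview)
There are two genuine gaps in your argument.

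First, the reduction step is logically backwards. You correctly observe (via Claim \ref{3.10} and \ref{4}) that the target conditions $h_{W(e_{\bar s}), W(e_{\bar t})}(p_{\bar s})$ and $h_{W(e_{\bar s'}), W(e_{\bar t'})}(p_{\bar s'})$ are \emph{stronger} than the images $h_{W(e_{\bar r}), W(e_{\bar t})}(p_{\bar r})$ and $h_{W(e_{\bar r}), W(e_{\bar t'})}(p_{\bar r})$. But compatibility of two weaker conditions says nothing about compatibility of conditions below them; the implication runs the other way. So reducing to the images of $p_{\bar r}$ gains nothing.

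Second, the identification $e_{\bar t}\cap e_{\bar t'}=e_{\bar r}$ (and the related claim $e_{\bar r}\subseteq e_{\bar t}$) is false. Since $s\mapsto e_s$ is injective and the $T_j$ are disjoint, one has $e_{\bar t}\cap e_{\bar t'}=\{e_{t_j}:t_j=t'_j\}$, i.e.\ the labels of the coordinates where $\bar t$ and $\bar t'$ literally coincide on level $a_\alpha+1$. Your $\bar r$, by contrast, consists of \emph{ancestor} nodes at a strictly lower level, so $e_{\bar r}$ is disjoint from both $e_{\bar t}$ and $e_{\bar t'}$; in particular the hypothesis of \ref{CL4} you want ($e_{\bar r}\subseteq e_{\bar t}$) is never met, and the copying map $h_{W(e_{\bar r}),W(e_{\bar t})}$ does not fix $W(e_{\bar r})$ pointwise (it sends $e_{\bar r}$ to $e_{\bar t}\neq e_{\bar r}$). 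The paper's argument avoids all of this: it works directly with $t^*=\{t_j:t_j=t'_j\}$ and the corresponding $s^*=\{s_j:t_j=t'_j\}\subset\bigcup_j T'_j(\alpha)$, uses \ref{3} to see that $p_{\bar s}$ and $p_{\bar s'}$ already agree on $W(e_{s^*})$, and then invokes \ref{CL4} (with the order-type hypothesis supplied by \eqref{ordertype}) to show the two copying maps carry $W(e_{s^*})$ onto $W(e_{t^*})=W(e_{\bar t})\cap W(e_{\bar t'})$ identically, so the copied conditions agree on the overlap. No passage through a common-ancestor condition $p_{\bar r}$ is needed.
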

\begin{proof}[Proof of the claim]
Let $\bar{t}_i\in \Pi_{j<d} B_j$, $\bar{s}_i \in \Pi_{j<d} T_j'(\alpha)$ with $\bar{s}_i\prec \bar{t}_i$, $i<2$. Consider $t^* = \{(t_0)_j: j<d\} \cap \{(t_1)_j: j<d\} , s^*=\{x\in \bigcup_{j<d} T'_j(\alpha): \text{for some } y\in t^*,\ x\prec y\}$. By \ref{CL3} in Lemma \ref{Cleanup}, we have $W(e_{\bar{t}_0})\cap W(e_{\bar{t}_1})=W(e_{t^*})$. Since by (\ref{ordertype}), for $i<2$, $(e_{\bar{t}_i}, e_{t^*}, <)\simeq (e_{\bar{s}_i}, e_{s^*}, <)$, we know $h_{W(e_{\bar{s}_i})), W(e_{\bar{t}_i})}(W(e_{s^*}))=W(e_{t^*})$ by \ref{CL4} in Lemma \ref{Cleanup}. Since $p_{\bar{s}_0}$ and $p_{\bar{s}_1}$ are compatible by \ref{3}, and in particular they agree on $W(e_{s^*})$, so $h_{W(e_{\bar{s}_0}), W(e_{\bar{t}_0})}(p_{\bar{s}_0})$ and $h_{W(e_{\bar{s}_1}), W(e_{\bar{t}_1})}(p_{\bar{s}_1})$ are compatible.
\end{proof}

Now we let $p^*$ be the greatest lower bound for $C$. Note that for each $j<d$, $k<\beta_j$, $p^*(e_{t^j_k},j)=t^j_k\restriction a_\alpha$.
To see this, let $\{s_0,\cdots, s_{d-1}\}=\bar{s}\in \Pi_{j<d} T_j'$, $\{t_0,\cdots, t_{d-1}\}=\bar{t}\in \Pi_{j<d} B_j$ be such that $t_j=t^j_k$ and $s_j=t^j_k\restriction a_\alpha$. As $p^*\leq h_{W(e_{\bar{s}}), W(e_{\bar{t}})}(p_{\bar{s}})$ and $p_{\bar{s}}(e_{s_j}, j)=s_j$ by \ref{2}, we know $h_{W(e_{\bar{s}}), W(e_{\bar{t}})}(p_{\bar{s}})(e_{t_j},j)=s_j$, hence $p^*(e_{t_j}, j)\succeq s_j$. Again by \ref{2} we know that $ht_{T_j} (p^*(e_{t_j}, j)) \leq a_{\alpha}$, so $p^*(e_{t_j}, j)=p^*(e_{t^j_k},j)= s_j$.

Further extend $p^*$ to $p'$ to ensure that 
\begin{equation}\label{maintaining}
\text{for each $j<d$ and $k<\beta_j$}, p'(e_{t^j_k},j)=t^j_k.
\end{equation} 

\begin{claim}\label{alpha}
There exist $p''\leq p'$ and $\{j_{\alpha, \bar{t}}<\delta_\alpha:  \bar{t}\in \Pi_{j<d} B_j\}$ such that for any $\bar{t}\in \Pi_{j<d} B_j \ p'' \restriction W(e_{\bar{t}})\Vdash  \dot{\tau}_{\alpha}(e_{\bar{t}})=j_{\alpha, \bar{t}}$.
\end{claim}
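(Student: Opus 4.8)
The plan is to enumerate the finitely... wait — the index set $\Pi_{j<d} B_j$ need not be finite, but it has size $<\kappa$, since each $B_j\subset T_j(a_\alpha+1)$ and $|T_j(a_\alpha+1)|<\kappa$ by the $\kappa$-tree property, so $|\Pi_{j<d}B_j| = \prod_{j<d}\beta_j<\kappa$ (here $\kappa$ inaccessible is used crucially). Fix an enumeration $\langle \bar{t}^\rho : \rho < \mu\rangle$ of $\Pi_{j<d}B_j$ with $\mu<\kappa$. The idea is to build a decreasing $\le_{\mathbb{P}}$-chain $\langle q_\rho : \rho\le\mu\rangle$ below $p'$, where at step $\rho$ we shrink the current condition on the piece $W(e_{\bar{t}^\rho})$ so that it decides $\dot\tau_\alpha(e_{\bar{t}^\rho})$. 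Concretely, set $q_0 = p'$; at a successor step $\rho+1$, since $\mathbb{P}\restriction W(e_{\bar{t}^\rho})$ contains a maximal antichain deciding $\dot\tau_\alpha(e_{\bar{t}^\rho})$ by \ref{CL1}, extend $q_\rho\restriction W(e_{\bar{t}^\rho})$ to some $r$ in $\mathbb{P}\restriction W(e_{\bar{t}^\rho})$ that forces $\dot\tau_\alpha(e_{\bar{t}^\rho}) = j_{\alpha,\bar{t}^\rho}$ for some ordinal $j_{\alpha,\bar{t}^\rho}<\delta_\alpha$, and let $q_{\rho+1}$ agree with $r$ on $W(e_{\bar{t}^\rho})$ and with $q_\rho$ off $W(e_{\bar{t}^\rho})$; this is a legitimate condition because $q_\rho\restriction W(e_{\bar{t}^\rho})$ and $r$ are compatible (the latter extends the former) so the amalgamation over the two coordinate-blocks is well defined, and $q_{\rho+1}\le_{\mathbb{P}}q_\rho$. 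At limit steps $\rho\le\mu$ take greatest lower bounds: the chain has length $<\kappa$ and $\mathbb{P}$ is $<\kappa$-closed, so $q_\rho = \bigwedge_{\sigma<\rho} q_\sigma$ exists. Finally put $p'' = q_\mu$.

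It remains to check that $p''$ works, i.e. that for every $\bar t\in\Pi_{j<d}B_j$ we have $p''\restriction W(e_{\bar t})\Vdash \dot\tau_\alpha(e_{\bar t}) = j_{\alpha,\bar t}$. Writing $\bar t = \bar t^\rho$, we have $q_{\rho+1}\restriction W(e_{\bar t^\rho})\Vdash \dot\tau_\alpha(e_{\bar t^\rho}) = j_{\alpha,\bar t^\rho}$ by construction, and since $p'' = q_\mu \le_{\mathbb{P}} q_{\rho+1}$, also $p''\restriction W(e_{\bar t^\rho}) \le_{\mathbb{P}\restriction W(e_{\bar t^\rho})} q_{\rho+1}\restriction W(e_{\bar t^\rho})$, so $p''\restriction W(e_{\bar t^\rho})$ forces the same equality. (One uses here that restriction to $\mathbb{P}\restriction B$ is order-preserving and that $\dot\tau_\alpha(e_{\bar t})$ is decided inside $\mathbb{P}\restriction W(e_{\bar t})$, so forcing statements about it are absolute between $\mathbb{P}$ and $\mathbb{P}\restriction W(e_{\bar t})$ in the relevant direction.) Also $p''\le p'$ since $p'' = q_\mu \le q_0 = p'$.

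The main obstacle, and the point requiring care, is the bookkeeping at successor steps: I must make sure that modifying the condition only on the block $W(e_{\bar t^\rho})$ does not destroy the decisions made at earlier blocks $W(e_{\bar t^\sigma})$, $\sigma<\rho$. This is where it matters that we only ever \emph{extend} within each $W$-block rather than changing the behaviour outside it, so that $q_{\rho+1}\restriction W(e_{\bar t^\sigma}) \le q_\rho\restriction W(e_{\bar t^\sigma})\le\cdots$ and earlier forced values are preserved down the chain; the blocks $W(e_{\bar t})$ may overlap (indeed $W(e_{\bar t})\cap W(e_{\bar t'}) = W(e_{t^*})$ by \ref{CL3}), but since each modification is a genuine strengthening in $\mathbb{P}$, monotonicity of $\Vdash$ under $\le_{\mathbb{P}}$ handles the overlaps automatically. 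A secondary point worth stating explicitly is the cardinal arithmetic $|\Pi_{j<d}B_j|<\kappa$, which is what licenses both the length-$<\kappa$ recursion and the use of $<\kappa$-closure of $\mathbb{P}$ at limits.
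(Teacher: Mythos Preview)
Your proof is correct and follows essentially the same approach as the paper: enumerate $\Pi_{j<d}B_j$ (which has size $<\kappa$), build a decreasing chain of conditions below $p'$ using $<\kappa$-closure of $\mathbb{P}$, and at each step use \ref{CL1} to extend the restriction to $W(e_{\bar{t}})$ so as to decide $\dot\tau_\alpha(e_{\bar{t}})$ there. The only cosmetic difference is that the paper first extends the whole condition to decide $\dot\tau_\alpha(e_{\bar{t}_l})$ and then projects into $\mathbb{P}\restriction W(e_{\bar{t}_l})$ via \ref{CL1}, whereas you directly extend $q_\rho\restriction W(e_{\bar{t}^\rho})$ inside the block and amalgamate; both are fine.
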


\begin{proof}[Proof of the claim]
Enumerate $\Pi_{j<d} B_j =\{\bar{t}_l: l<\epsilon\}$. We will build recursively a decreasing chain $\langle q_{l}: l<\epsilon\rangle$ such that for each $l<\epsilon$, $q_l \restriction W(e_{\bar{t}_l}) \Vdash \dot{\tau}_\alpha (e_{\bar{t}_l})= j_{\alpha, \bar{t}_l}$. At stage $l<\epsilon$, let $q^*$ be a lower bound for $\langle q_w: w<l\rangle$ and $j_{\alpha, \bar{t}_l}<\delta_\alpha$ such that $q^*\Vdash \dot{\tau}_\alpha (e_{\bar{t}_l})= j_{\alpha, \bar{t}_l}$. Since $\mathbb{P}\restriction W(e_{\bar{t}_l})$ contains a maximal antichain deciding the values of $\dot{\tau}_\alpha (e_{\bar{t}_l})$, there exists $q'\in \mathbb{P}\restriction W(e_{\bar{t}_l})$ and $q'\leq q^*\restriction W(e_{\bar{t}_l})$ such that $q'\Vdash \dot{\tau}_\alpha (e_{\bar{t}_l})=j_{\alpha,\bar{t}_l}$. Pick $q_l \leq  q', q^*$, then it would be as desired. Let $p''$ be a lower bound for $\langle q_l: l<\epsilon\rangle$, then it works.
\end{proof}

\begin{claim}\label{3.8}
$p''\Vdash \forall \beta < \alpha \; \forall \bar{t}\in \Pi_{j<d}B_j\ \dot{\tau}_{\beta} (e_{\bar{t}})=j_{\beta, \bar{t}\restriction a_{\beta+1}}$.
\end{claim}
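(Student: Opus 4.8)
The goal of Claim \ref{3.8} is to show that the single condition $p''$, built to decide the colors $\dot\tau_\alpha(e_{\bar t})$ at the current level, simultaneously still decides all the \emph{earlier} colors $\dot\tau_\beta(e_{\bar t})$ in the way predicted by the previously chosen values $j_{\beta,\bar t\restriction a_{\beta+1}}$. The plan is to fix $\beta<\alpha$ and $\bar t\in\Pi_{j<d}B_j$, set $\bar v=\bar t\restriction' a_{\beta+1}\in\Pi_{j<d}T'_j(\beta+1)$ (using that $B_j$ consists of immediate successors of nodes of $T'_j(\alpha)$, so its restrictions to earlier witnessing levels land in $T'_j$), and then chase the inequalities among conditions: $p''\le p^*=\bigwedge C\le h_{W(e_{\bar s}),W(e_{\bar t})}(p_{\bar s})$ where $\bar s=\bar t\restriction' a_\alpha\in\Pi_{j<d}T'_j(\alpha)$ and in turn $p_{\bar s}\le h_{W(e_{\bar v}),W(e_{\bar s})}(p_{\bar v})$ by \ref{4}. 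Composing the copying maps as in the proof of Claim \ref{3.10} gives $p''\le h_{W(e_{\bar v}),W(e_{\bar t})}(p_{\bar v})$.

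Next I would invoke the two defining properties of $p_{\bar v}$ from \ref{1}: first $p_{\bar v}\Vdash\dot\tau_\beta(e_{\bar v})=j_{\beta,\bar v}$, and then — this is the content of \ref{1} applied to the level sequence $\bar v$ itself as the ``$\bar v$'' and $\bar v$ as the ``$\bar v'$'', or rather the clause that \emph{all} level sequences above $\bar v$ get value $j_{\beta,\bar v}$ — we have $p_{\bar v}\Vdash\dot\tau_\beta(e_{\bar v'})=j_{\beta,\bar v}$ for every level sequence $\bar v'\succ\bar v$ in $\Pi_{j<d}T'_j$. Wait: $\bar t$ itself need not lie in $\Pi_{j<d}T'_j$, since $B_j\subseteq T_j(a_\alpha+1)$ but the witnessing level of $T'_j$ at stage $\alpha+1$ has not yet been fixed. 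So the clean route is instead to transport the \emph{forcing} statement along the copying map: applying $h=h_{W(e_{\bar v}),W(e_{\bar t})}$ to ``$p_{\bar v}\restriction W(e_{\bar v})\Vdash\dot\tau_\beta(e_{\bar v})=j_{\beta,\bar v}$'' and using \ref{CL2} of Lemma \ref{Cleanup} (the equivalence $p\Vdash\dot\tau_i(u)=\gamma\Leftrightarrow h(p)\Vdash\dot\tau_i(v)=\gamma$ with $u=e_{\bar v}$, $v=e_{\bar t}$, noting $h(e_{\bar v})=e_{\bar t}$ by the order-type normalization \eqref{ordertype} and \ref{CL2}), we conclude $h(p_{\bar v})\Vdash\dot\tau_\beta(e_{\bar t})=j_{\beta,\bar v}$, hence $p''\le h(p_{\bar v})$ forces the same. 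Since $j_{\beta,\bar v}=j_{\beta,\bar t\restriction' a_{\beta+1}}=j_{\beta,\bar t\restriction a_{\beta+1}}$ by the definition of $\bar v$, this is exactly the claim for the fixed $\beta,\bar t$; as $\beta<\alpha$ and $\bar t\in\Pi_{j<d}B_j$ were arbitrary, $p''$ forces the universally quantified statement.

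I would spell out the one genuinely delicate point, namely the compatibility of the copying maps $h_{W(e_{\bar v}),W(e_{\bar s})}$ and $h_{W(e_{\bar s}),W(e_{\bar t})}$ with composition and with the order-preserving bijection sending $e_{\bar v}\mapsto e_{\bar t}$. This is where \ref{CL4} of Lemma \ref{Cleanup} is used: from $\bar v\restriction' a_\beta$-type considerations one gets $e_{\bar v}\subseteq W(e_{\bar s})$-images matching up, i.e.\ the three sets $W(e_{\bar v})\subseteq W(e_{\bar s})\subseteq$ (ambient) are carried to $W(e_{\bar t\restriction a_{\beta+1}})\subseteq W(e_{\bar t})$ compatibly, so $h_{W(e_{\bar s}),W(e_{\bar t})}\circ h_{W(e_{\bar v}),W(e_{\bar s})}=h_{W(e_{\bar v}),W(e_{\bar t})}$ on $\mathbb P\restriction W(e_{\bar v})$, which is the displayed computation in the proof of Claim \ref{3.10}. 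The main obstacle is purely bookkeeping: making sure the finitely many copying maps are applied to the right domains and that \eqref{ordertype} really does guarantee $h_{W(e_{\bar v}),W(e_{\bar t})}(e_{\bar v})=e_{\bar t}$ (one needs that $\bar v=\bar t\restriction' a_{\beta+1}$ is coordinate-wise the restriction and that $e_{(\cdot)}$ is order-preserving across coordinates on each level, which is precisely \eqref{ordertype}). No new idea beyond Lemma \ref{Cleanup} and the invariants is needed.
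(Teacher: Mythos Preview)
Your argument is correct and is exactly the paper's proof: set $\bar x=\bar t\restriction a_{\beta+1}$, observe $p''\le h_{W(e_{\bar x}),W(e_{\bar t})}(p_{\bar x})$ via Claim~\ref{3.10} and the definition of $p^*$, then transport $p_{\bar x}\Vdash\dot\tau_\beta(e_{\bar x})=j_{\beta,\bar x}$ along $h_{W(e_{\bar x}),W(e_{\bar t})}$ using \ref{CL2}.

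One small correction to your commentary: the composition identity $h_{W(e_{\bar s}),W(e_{\bar t})}\circ h_{W(e_{\bar v}),W(e_{\bar s})}=h_{W(e_{\bar v}),W(e_{\bar t})}$ does \emph{not} use \ref{CL4}, and your claim that $W(e_{\bar v})\subseteq W(e_{\bar s})$ is false (indeed by \ref{CL3} these two sets meet only in $W(\emptyset)$, since $e_{\bar v}\cap e_{\bar s}=\emptyset$ by injectivity of the association). The composition holds simply because all three $W$-sets have the same order type by \ref{CL2}, and the order-preserving bijection between two well-ordered sets of a given type is unique; no further lemma is required.
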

\begin{proof}[Proof of the claim]
Let $\bar{x}=\bar{t}\restriction a_{\beta+1} \in \Pi_{j<d} T'_j(\beta+1)$.
Notice that $p''\leq h_{W(e_{\bar{x}}), W(e_{\bar{t}})}(p_{\bar{x}})$ and $p_{\bar{x}}\Vdash \dot{\tau}_\beta (e_{\bar{x}})=j_{\beta, \bar{x}}$. By \ref{CL2} in Lemma \ref{Cleanup}, 
we know that $h_{W(e_{\bar{x}}), W(e_{\bar{t}})}(p_{\bar{x}}) \Vdash \dot{\tau}_\beta(e_{\bar{t}})= j_{\beta, \bar{x}}$. The claim then follows.
\end{proof}

Now find $p'''\leq p''$ and $\chi<\kappa$ such that 
\begin{equation}\label{choice of chi}
\chi > \sup \{ ht_{T_j} (p''(i)(j)): i\in dom(p''), j<d\}
\end{equation} 
 and $p'''\Vdash \chi \in \bigcap \{ \dot{A}_{e_{\bar{t}}}^\beta: \beta\leq \alpha, \bar{t}\in \Pi_{j<d}B_j\}$. The reason why we can do this is $\dot{D}$ is forced to be $\kappa$-complete.
Extending $p'''$ if necessary we may assume that for any $l\in dom(p''')$ and  $j<d$, we have $ht_{T_j}(p'''(l)(j))\geq \chi$. 

Consider $C_j=\{p'''(e_{t})(j)\restriction \chi: t\in B_j\}$ for each $j<d$.
For any $\bar{t}=\{t_0, t_1,\cdots, t_{d-1}\}\in \Pi_{j<d} B_j$, 
\begin{equation*}
 p'''\Vdash f_\alpha (\dot{c}_{e_{t_0}}(0)\restriction \chi, \cdots, \dot{c}_{e_{t_{d-1}}}(d-1)\restriction \chi)=\dot{\tau}_\alpha(e_{\bar{t}})=j_{\alpha,\bar{t}},
\end{equation*} which implies
\begin{equation}\label{color1}
 f_{\alpha}(p'''(e_{t_0})(0)\restriction \chi, \cdots, p'''(e_{t_{d-1}})(d-1)\restriction\chi)=j_{\alpha, \bar{t}} 
\end{equation}
and similarly for any $\beta<\alpha$,  \begin{equation*}
 p'''\Vdash f_\beta (\dot{c}_{e_{t_0}}(0)\restriction \chi, \cdots, \dot{c}_{e_{t_{d-1}}}(d-1)\restriction \chi)=\dot{\tau}_\beta(e_{\bar{t}})=j_{\beta,\bar{t}\restriction a_{\beta+1}},
\end{equation*} 
which implies
\begin{equation}\label{color2}
 f_{\beta}(p'''(e_{t_0})(0)\restriction \chi , \cdots, p'''(e_{t_{d-1}})(d-1)\restriction\chi )=j_{\beta, \bar{t}\restriction a_{\beta+1}}.
\end{equation}

Note that for any $t\in B_j$, $p'''(e_t,j)\restriction \chi \succ p''(e_t,j) \succ p'(e_t,j)=t$ by (\ref{maintaining}). Let $C_j=T'_j(\alpha+1), j<d$ and let $a_{\alpha+1}=\chi$. For any $j<d$, and any $t\in B_j$, there exists a unique $t'\in C_j$, equivalently $t'\in T_j'(\alpha+1)$, such that $t'\succ t$. Thus this ensures the trees being constructed are strong subtrees.

Let $q\in \mathbb{P}$ be such that $dom(q)=dom(p''')$ and for any $k\in dom(q), j<d$, $q(k)(j)=p'''(k)(j)\restriction \chi$. Notice by the choice of $\chi$ as in (\ref{choice of chi}), $q\leq p''$.

For each $\bar{y}\in \Pi_{j<d} C_j$, let $\bar{t}=\bar{y}\restriction a_{\alpha}+1$ so that $\bar{t}\in \Pi_{j<d}B_j$ and define
\begin{enumerate}[label=\textbf{R.\arabic*}]
\item \label{Req1} $j_{\alpha, \bar{y}}:=j_{\alpha, \bar{t}}$
\item \label{Req2} $p_{\bar{y}}:=h_{W(e_{\bar{t}}), W(e_{\bar{y}})}(q\restriction W(e_{\bar{t}})).$

\end{enumerate}

Fix $\{y_0,\cdots, y_{d-1}\}=\bar{y}\in \Pi_{j<d} T_j'(\alpha+1)$ and let $\{t_0,\cdots, t_{d-1}\}=\bar{t}=\bar{y}\restriction a_{\alpha}+1$ so that $\bar{t}\in \Pi_{j<d}B_j$. We will verify the construction invariants are maintained. 

\begin{enumerate}

\item[\ref{1}] Since by Claim \ref{alpha}, $p'' \restriction W(e_{\bar{t}})\Vdash \dot{\tau}_\alpha (e_{\bar{t}})=j_{\alpha, \bar{t}}$ and $q\restriction W(e_{\bar{t}})\leq p''\restriction W(e_{\bar{t}})$, using \ref{CL2} in Lemma \ref{Cleanup} we know $p_{\bar{y}}\Vdash \dot{\tau}_\alpha (e_{\bar{y}})=j_{\alpha, \bar{t}}=j_{\alpha, \bar{y}}$. Similarly let $\bar{x}=\bar{t}\restriction a_{\beta+1}$ and for any $\beta<\alpha$, we have
\begin{equation*}
q \restriction W(e_{\bar{t}})\leq p''\restriction W(e_{\bar{t}})\leq h_{W(e_{\bar{x}}), W(e_{\bar{t}})}(p_{\bar{x}}).
\end{equation*} 
Since $p_{\bar{x}}\Vdash \dot{\tau}_\beta(e_{\bar{x}})=j_{\beta,\bar{x}}$ by \ref{1} in the induction hypothesis, we know $p_{\bar{y}}\Vdash \dot{\tau}_\beta (e_{\bar{y}})=j_{\beta,\bar{x}}$. By \ref{color2}, $f_\beta (\bar{y})=j_{\beta, \bar{x}}$.
This is as desired since $\bar{x}=\bar{y}\restriction' \beta+1$.

\item[\ref{2}] Observe that for any $k\in  dom(p_{\bar{y}}), j<d$, $ht_{T_j}(p_{\bar{y}}(k)(j))\leq \chi=a_{\alpha+1}$ and 
\begin{equation*}
 p_{\bar{y}}(e_{y_j},j)=h_{W(e_{\bar{t}}), W(e_{\bar{y}})}(q\restriction W(e_{\bar{t}}))(e_{y_j},j)=q(e_{t_j}, j)=p'''(e_{t_j})(j)\restriction\chi = y_j.
\end{equation*} 
  
\item[\ref{3}] It follows from the similar proof as in Claim \ref{3.11}. More precisely, given $\bar{y}_0, \bar{y}_1\in \Pi_{j<d} C_j = \Pi_{j<d} T'_j(\alpha+1)$, we need to show $p_{\bar{y}_0}$ and $p_{\bar{y}_1}$ are compatible. Let $\bar{t}_0 =\bar{y}_0\restriction a_{\alpha}+1, \bar{t}_1=\bar{y}_1\restriction a_{\alpha}+1$. By \ref{Req2}, $p_{\bar{y}_0}=h_{W(e_{\bar{t}_0}),W(e_{\bar{y}_0})}(q\restriction W(e_{\bar{t}_0}))$ and $p_{\bar{y}_1}=h_{W(e_{\bar{t}_1}),W(e_{\bar{y}_1})}(q\restriction W(e_{\bar{t}_1}))$. Let $y^*=\{(y_0)_j: j<d\}\cap \{(y_1)_j: j<d\}$ and $t^*=\{t\in (\bigcup \bar{t}_0 \cup \bigcup \bar{t}_1): \text{for some } h\in y^*, t\prec h \}$. We know that $(e_{\bar{y}_i}, e_{y^*},<)\simeq (e_{\bar{t}_i}, e_{t^*},<)$ for $i<2$. By \ref{CL3} in Lemma \ref{Cleanup}, we know $W(e_{\bar{y}_0})\cap W(e_{\bar{y}_1})=W(e_{y^*})$. By \ref{CL4} we get $h_{W(e_{\bar{t}_i}), W(e_{\bar{y}_i})}(W(e_{t^*}))=W(e_{y^*})$ for $i<2$. As $q\restriction W(e_{\bar{t}_0})$ agrees with $q\restriction W(e_{\bar{t}_1})$ on $W(e_{t^*})$, it follows that $p_{\bar{y}_0}$ agrees with $p_{\bar{y}_1}$ on $W(e_{y^*})$. But $dom(p_{\bar{y}_0})\cap dom(p_{\bar{y}_1})\subset W(e_{\bar{y}_0})\cap W(e_{\bar{y}_1})=W(e_{y^*})$. Hence these two conditions are compatible.

\item[\ref{4}] For any $\beta\leq \alpha$, let $\bar{s}=\bar{y}\restriction' \beta$ and recall the choice of $p^*$ following Claim \ref{3.11}, we have 

\begin{equation}
p^*\restriction W(e_{\bar{t}}) \leq  h_{W(e_{\bar{s}}), W(e_{\bar{t}})} (p_{\bar{s}}).
\end{equation}
Therefore, 
\begin{equation}
p_{\bar{y}}=h_{W(e_{\bar{t}}), W(e_{\bar{y}})}(q\restriction W(e_{\bar{t}})) \leq h_{W(e_{\bar{t}}), W(e_{\bar{y}})} (p^*\restriction W(e_{\bar{t}})) \leq h_{W(e_{\bar{s}}), W(e_{\bar{y}})}(p_{\bar{s}})
\end{equation}
as desired.
\end{enumerate}

\textbf{Stage $\alpha$ when $\alpha$ is a limit}.
The construction will be very similar to the successor case so we will just point out differences. Suppose we have constructed $\{T_j'(<\alpha): j<d\}$ and $\{a_i: i<\alpha\}$ satisfying the construction invariants. For $j<d$, let $B_j = \{t_k^j: k<\beta_j\}$ enumerate the nodes on $T_j(\bar{\alpha})$ where $\bar{\alpha}=\sup_{\gamma<\alpha} a_\gamma$ such that they are the upper bounds of branches through $T'_j(<\alpha)$.

We need the following claims, that are analogous to Claim \ref{3.10} and Claim \ref{3.11} respectively.
\begin{claim}\label{An1}
For each $\bar{t}\in \Pi_{j<d} B_j$, $\{h_{W(e_{\bar{t}\restriction a_\beta}), W(e_{\bar{t}})}(p_{\bar{t}\restriction a_\beta}): \beta<\alpha\}$ consists of pairwise compatible conditions, in fact it is decreasing as $\beta$ increases. 
\end{claim}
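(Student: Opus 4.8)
The plan is to mimic exactly the computation in Claim \ref{3.10}, replacing the successor ordinal $\alpha$ with the limit ordinal $\alpha$ and the witnessing levels $a_\beta$ for $\beta\le\alpha$ with $a_\beta$ for $\beta<\alpha$. Fix $\bar t\in\Pi_{j<d}B_j$ and, for $\beta<\alpha$, write $\bar t\restriction' a_\beta$ for the restriction of $\bar t$ to $\Pi_{j<d}T'_j(\beta)$ (which lies on level $a_\beta$ of each $T_j$). The point is that the images of the conditions $p_{\bar t\restriction' a_\beta}$ under the copying maps into $W(e_{\bar t})$ all sit inside $\mathbb{P}\restriction W(e_{\bar t})$, and their compatibility — indeed their being a descending chain — is forced by construction invariant \ref{4}.

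First I would fix $\beta<\gamma<\alpha$ and set $\bar s=\bar t\restriction' a_\beta$, $\bar s'=\bar t\restriction' a_\gamma$, so that $\bar s\prec\bar s'\prec\bar t$ as level sequences in $\Pi_{j<d}T'_j$ (the last inequality because $\bar t$ is an upper bound of a branch through $T'_j(<\alpha)$, hence extends $\bar s'$). By \ref{4} applied to $\bar s\prec\bar s'$, we have $p_{\bar s'}\le_\mathbb{P} h_{W(e_{\bar s}),W(e_{\bar s'})}(p_{\bar s})$. Since copying maps are order preserving, applying $h_{W(e_{\bar s'}),W(e_{\bar t})}$ to both sides gives
\begin{equation*}
h_{W(e_{\bar s'}),W(e_{\bar t})}(p_{\bar s'})\ \le\ h_{W(e_{\bar s'}),W(e_{\bar t})}\bigl(h_{W(e_{\bar s}),W(e_{\bar s'})}(p_{\bar s})\bigr)\ =\ h_{W(e_{\bar s}),W(e_{\bar t})}(p_{\bar s}),
\end{equation*}
where the equality is the functoriality of the copying action, which holds because $h_{W(e_{\bar s'}),W(e_{\bar t})}\circ h_{W(e_{\bar s}),W(e_{\bar s'})}$ and $h_{W(e_{\bar s}),W(e_{\bar t})}$ are both the order isomorphism $W(e_{\bar s})\to W(e_{\bar t})$ (one uses here that $\mathrm{type}(W(u))$ depends only on $u$, by \ref{CL2}, and that these sets are appropriately nested, cf. \ref{CL4}). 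This shows the family is $\subseteq$-decreasing in $\beta$, hence in particular pairwise compatible. The only subtlety I would flag explicitly is that $h_{W(e_{\bar s}),W(e_{\bar t})}(p_{\bar s})$ genuinely lies in $\mathbb{P}\restriction W(e_{\bar t})$ so that the chain lives in a single $\kappa$-closed poset and has a lower bound — but $p_{\bar s}\in\mathbb{P}\restriction W(e_{\bar s})$ by the construction and the copying map lands in $\mathbb{P}\restriction W(e_{\bar t})$ by definition, so this is immediate.

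I do not expect any real obstacle here: this claim is purely a bookkeeping consequence of invariant \ref{4} and the functoriality of the copying maps, entirely parallel to Claim \ref{3.10}. The only thing one must be slightly careful about is that at a limit stage the relevant restrictions are indexed by $\beta<\alpha$ rather than $\beta\le\alpha$, and that $\sup_{\beta<\alpha}a_\beta=\bar\alpha$ is the level on which the $t^j_k$ sit; this is why $\bar t$ extends every $\bar t\restriction' a_\beta$ and the argument above applies verbatim. (The companion compatibility statement across distinct $\bar t$'s — the limit analogue of Claim \ref{3.11} — will be proved separately using \ref{CL3} and \ref{CL4} exactly as in the successor case.)
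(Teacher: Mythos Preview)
Your proof is correct and follows essentially the same approach as the paper: fix $\beta<\gamma<\alpha$, apply construction invariant \ref{4} to the pair of restrictions, and push forward by the copying map $h_{W(e_{\bar s'}),W(e_{\bar t})}$ using functoriality. One small notational slip: what you write as $\bar t\restriction' a_\beta$ should be either $\bar t\restriction a_\beta$ (restriction in $T_j$) or $\bar t\restriction' \beta$ (restriction in $T_j'$), but you explain the intended meaning correctly, so no harm done.
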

\begin{proof}[Proof of the claim]
Given $\beta_0< \beta_1<\alpha$, let $\bar{x}=\bar{t}\restriction a_{\beta_0}$ and $\bar{y}=\bar{t}\restriction a_{\beta_1}$. By \ref{4} of the construction invariants, $p_{\bar{y}}\leq h_{W(e_{\bar{x}}), W(e_{\bar{y}})}(p_{\bar{x}})$. Hence 
\begin{equation*}
h_{W(e_{\bar{y}}), W(e_{\bar{t}})}(p_{\bar{y}})\leq h_{W(e_{\bar{y}}), W(e_{\bar{t}})}(h_{W(e_{\bar{x}}), W(e_{\bar{y}})}(p_{\bar{x}}))=h_{W(e_{\bar{x}}), W(e_{\bar{t}})}(p_{\bar{x}}).
\end{equation*}

\end{proof}

For each $\bar{t}\in \Pi_{j<d} B_j$, let $q_{\bar{t}}$ denote the greatest lower bound for the set as in Claim \ref{An1}.

\begin{claim}
$\{q_{\bar{t}}: \bar{t}\in \Pi_{j<d} B_j\}$ consists of pairwise compatible conditions.
\end{claim}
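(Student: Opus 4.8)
The plan is to run the argument for the successor analogue, Claim \ref{3.11}, essentially verbatim; the one genuinely new feature is that $q_{\bar t}$ is now the greatest lower bound of the decreasing chain $\langle h_{W(e_{\bar t\restriction a_\beta}),W(e_{\bar t})}(p_{\bar t\restriction a_\beta}):\beta<\alpha\rangle$ produced in Claim \ref{An1}, rather than a single copied condition, so the bookkeeping has to be carried out one level at a time. I would fix $\bar t_0,\bar t_1\in\Pi_{j<d}B_j$, abbreviate $h^i_\beta:=h_{W(e_{\bar t_i\restriction a_\beta}),W(e_{\bar t_i})}(p_{\bar t_i\restriction a_\beta})$, and record that since $\langle h^i_\beta:\beta<\alpha\rangle$ is decreasing we have $dom(q_{\bar t_i})=\bigcup_{\beta<\alpha}dom(h^i_\beta)\subseteq W(e_{\bar t_i})$ with the domains increasing in $\beta$. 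Then, setting $t^*=\bar t_0\cap\bar t_1$ (the at most $d$ common nodes of the two level sequences), we get $e_{\bar t_0}\cap e_{\bar t_1}=e_{t^*}$, so by \ref{CL3} in Lemma \ref{Cleanup}, $dom(q_{\bar t_0})\cap dom(q_{\bar t_1})\subseteq W(e_{\bar t_0})\cap W(e_{\bar t_1})=W(e_{t^*})$. It thus suffices to show $q_{\bar t_0}$ and $q_{\bar t_1}$ agree on $W(e_{t^*})$.

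The preliminary step is to localize the copying maps to this common block. Write $t^*\restriction a_\beta$ for the set of $a_\beta$-level predecessors of the nodes of $t^*$ (playing the role of $s^*$ in Claim \ref{3.11}); this is the same whether we descend from $\bar t_0$ or $\bar t_1$. By the ordering convention (\ref{ordertype}), $(e_{\bar t_i\restriction a_\beta},e_{t^*\restriction a_\beta},<)\simeq(e_{\bar t_i},e_{t^*},<)$ for $i<2$, so \ref{CL4} gives $h_{W(e_{\bar t_i\restriction a_\beta}),W(e_{\bar t_i})}(W(e_{t^*\restriction a_\beta}))=W(e_{t^*})$; since an order isomorphism between two linear orders is unique, the restriction of $h_{W(e_{\bar t_i\restriction a_\beta}),W(e_{\bar t_i})}$ to $W(e_{t^*\restriction a_\beta})$ must equal $h_{W(e_{t^*\restriction a_\beta}),W(e_{t^*})}$, and this is the same map for $i=0$ and $i=1$. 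Consequently $h^i_\beta\restriction W(e_{t^*})=h_{W(e_{t^*\restriction a_\beta}),W(e_{t^*})}(p_{\bar t_i\restriction a_\beta}\restriction W(e_{t^*\restriction a_\beta}))$ for every $\beta<\alpha$ and $i<2$.

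Finally I would take $z\in dom(q_{\bar t_0})\cap dom(q_{\bar t_1})\subseteq W(e_{t^*})$ and let $z_\beta$ be its preimage in $W(e_{t^*\restriction a_\beta})$ under $h_{W(e_{t^*\restriction a_\beta}),W(e_{t^*})}$, which does not depend on $i$. Since the domains $dom(h^i_\beta)$ increase with $\beta$ and union up to $dom(q_{\bar t_i})$, for all sufficiently large $\beta<\alpha$ we have $z\in dom(h^0_\beta)\cap dom(h^1_\beta)$, and then $z_\beta\in dom(p_{\bar t_0\restriction a_\beta})\cap dom(p_{\bar t_1\restriction a_\beta})$; these two conditions are compatible by \ref{3} (applied at level $\beta$), so $p_{\bar t_0\restriction a_\beta}(z_\beta)=p_{\bar t_1\restriction a_\beta}(z_\beta)$, whence $h^0_\beta(z)=h^1_\beta(z)$. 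Taking greatest lower bounds coordinatewise, which exist because each $T_j$ is $<\kappa$-closed, gives $q_{\bar t_0}(z)=q_{\bar t_1}(z)$, so $q_{\bar t_0}$ and $q_{\bar t_1}$ are compatible. The one point where I expect to have to be genuinely careful is the interchange of ``restricting to $W(e_{t^*})$'' with the greatest-lower-bound operation and with the chain of copying maps; beyond that, \ref{CL3}, \ref{CL4}, and \ref{3} play exactly the same roles here as in the proof of Claim \ref{3.11}.
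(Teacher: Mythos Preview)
Your approach is essentially the paper's, which runs the same argument by contradiction: assuming $q_{\bar t_0}(\nu,j)$ and $q_{\bar t_1}(\nu,j)$ are incomparable in $T_j$ for some $\nu,j$, the paper descends to a single $\beta<\alpha$ large enough that the copied conditions $h_{W(e_{\bar t_i\restriction a_\beta}),W(e_{\bar t_i})}(p_{\bar t_i\restriction a_\beta})$ are already incomparable at $(\nu,j)$, and then uses \ref{CL3}, \ref{CL4}, and \ref{3} exactly as you do to reach a contradiction.

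One slip in your write-up: compatibility of $p_{\bar t_0\restriction a_\beta}$ and $p_{\bar t_1\restriction a_\beta}$ via \ref{3} only yields that $p_{\bar t_0\restriction a_\beta}(z_\beta)(j)$ and $p_{\bar t_1\restriction a_\beta}(z_\beta)(j)$ are \emph{comparable} in $T_j$, not equal, so you cannot conclude $h^0_\beta(z)=h^1_\beta(z)$ nor $q_{\bar t_0}(z)=q_{\bar t_1}(z)$. This does not break the argument: two $\prec_{T_j}$-increasing sequences whose $\beta$-th terms are comparable for every $\beta$ have all their terms lying along a single branch of $T_j$, so their suprema are comparable; hence $q_{\bar t_0}(z)(j)$ and $q_{\bar t_1}(z)(j)$ are comparable for each $j<d$, which is precisely what compatibility requires. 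With ``equal'' replaced by ``comparable'' in your last paragraph, the proof goes through and is the direct form of the paper's contrapositive.
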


\begin{proof}[Proof of the claim]
Suppose for the sake of contradiction that there exists $\bar{t}_0, \bar{t}_1 \in \Pi_{j<d} B_j$ such that $q_{\bar{t}_0} \bot q_{\bar{t}_1}$. Then there exists $\nu  \in dom(q_{\bar{t}_0}) \cap dom(q_{\bar{t}_1}), j<d$ such that $q_{\bar{t}_0}(\nu,j)$ and $q_{\bar{t}_1}(\nu,j)$ are incompatible in $(T_j, <_{T_j})$. Since for $i<2$, $q_{\bar{t}_i}(\nu,j) = \sup_{\beta<\alpha} h_{W(e_{\bar{t}_i \restriction a_\beta}), W(e_{\bar{t}_i})}(p_{\bar{t}_i\restriction a_\beta})(\nu, j)$, we can find some $\beta<\alpha$ large enough such that if we let $\bar{s}_0=\bar{t}_0 \restriction a_\beta, \bar{s}_1=\bar{t}_1 \restriction a_\beta$, both in $\Pi_{j<d} T'_j(\beta)$, then $h_{W(e_{\bar{s}_0}), W(e_{\bar{t}_0})}(p_{\bar{s}_0})(\nu,j)$ and $h_{W(e_{\bar{s}_1}), W(e_{\bar{t}_1})}(p_{\bar{s}_1})(\nu,j)$ are incompatible in $T_j$. Let $t^* = \bar{t}_0\cap \bar{t}_1$ then $W(e_{t^*})=W(e_{\bar{t}_0})\cap W(e_{\bar{t}_1})$. Let $s^*=\{z\in \bigcup_{j<d} T_j'(\beta): \text{for some }y\in t^*, z\prec y\}$. As in Claim \ref{3.11} we have $h_{W(e_{\bar{s}_i}), W(e_{\bar{t}_i})}(W(e_{s^*}))=W(e_{t^*})$. Note that $h_{W(e_{\bar{s}_0}), W(e_{\bar{t}_0})}$ and $h_{W(e_{\bar{s}_1}), W(e_{\bar{t}_1})}$ actually agree on $W(e_{s^*})$. Now let $\nu'\in W(e_{s^*})$ be such that $h_{W(e_{\bar{s}_i}), W(e_{\bar{t}_i})}(\nu')=\nu$. Then for $i<2$, 
\begin{equation*}
p_{\bar{s}_i}(\nu',j)=h_{W(e_{\bar{s}_i}), W(e_{\bar{t}_i})}(p_{\bar{s}_i})(\nu,j).
\end{equation*} 
By the assumption we have that $p_{\bar{s}_0}(\nu',j)$ is incompatible with $p_{\bar{s}_1}(\nu',j)$, but this contradicts with our induction hypothesis on the construction invariants.
\end{proof}

The rest of the proof is almost identical with the successor case except that we do not need to determine the corresponding values for $\dot{\tau}_\alpha$ now. More precisely, we follow the proof after Claim \ref{3.11} but skip Claim \ref{alpha}.

\end{proof}

\begin{remark}\label{WC}
If inaccessible $\kappa$ satisfies $HL^{tc}(1,2,\kappa)$, then $\kappa$ is weakly compact. To see this, given $g: [\kappa]^2\to 2$, we have $g_\alpha: \kappa \to 2$ for each $\alpha<\kappa$ such that $g_{\alpha}(\beta)=g(\alpha,\beta)$. Fix a bijection $h$ between $\kappa$ and $2^{<\kappa}$. We may view $g_\alpha$ as colorings of $2^{<\kappa}$ via $h$. By tail cone homogeneity, we have a strong subtree $T'\subset 2^{<\kappa}$ that is tail cone homogeneous with respect to $\langle g_\alpha: \alpha<\kappa\rangle$. Pick a branch through $T'$, then this is a subset of $2^{<\kappa}$ of size $\kappa$ such that any $g_{\alpha}$ is constant except for a subset of size $<\kappa$. But this clearly implies that $\kappa$ is weakly compact.
\end{remark}

The following is a straightforward recursive construction.

\begin{lem}\label{trimming}
For any nice tree $T$, any strong subtree $T'\subset T$ with witnessing level set $A'\in [\kappa]^\kappa$ and any $A\subset A'$ with $|A|=\kappa$, there exists a strong subtree $T^*\subset T'$ such that $T^*$ is a strong subtree of $T$ with witnessing level set $A$.
\end{lem}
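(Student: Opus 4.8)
The plan is to build $T^*$ level by level, using $A = \{a_\xi : \xi < \kappa\}$ (increasing enumeration) as the intended witnessing level set, and at each stage selecting a single node of $T'$ above each previously chosen node, sitting at a level of $T'$ whose height lies in $A$. The key point is that since $A \subseteq A'$ and $A'$ is the witnessing level set of $T'$ over $T$, every $a_\xi$ is also the height in $T$ of some full level of $T'$; write $A' = \{a'_\eta : \eta < \kappa\}$ and let $\iota(\xi)$ be the unique $\eta$ with $a_\xi = a'_\eta$, so that $T'(a_\xi) = T'$-level number $\iota(\xi)$, i.e. $T'$ restricted to $T$-height $a_\xi$ is exactly a level of the strong subtree $T'$.

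First I would set $root(T^*) = root(T')$. At a successor stage $\xi+1$, assuming $T^*(\leq \xi)$ has been constructed with $T^*(\zeta) \subseteq T'(a_\zeta)$ for $\zeta \leq \xi$, I consider each $s \in T^*(\xi)$ and each immediate successor $t$ of $s$ in $T'$ (there are at least two, by perfectness of $T'$, and exactly the successors one wants to keep): since $T'$ is well-pruned and $a_{\xi+1} \geq a_\xi + 1$ is the $T$-height of a level of $T'$ above the level of $t$, there is exactly one node of $T'(a_{\xi+1})$ extending $t$; collect these nodes into $T^*(\xi+1)$. At a limit stage $\nu$, for each branch $b$ through $T^*(<\nu)$, the nodes along $b$ form an increasing sequence in $T'$, which has a unique upper bound at $T'$-level $\bar\nu := \sup_{\zeta<\nu} a_\zeta$ inside $T'$ (using that $T'$ is $<\kappa$-closed and well-pruned, so limits of branches are unique); then I extend that upper bound to the unique node of $T'(a_\nu)$ above it (again using $a_\nu \in A'$ and well-prunedness), and let these form $T^*(\nu)$.

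Finally I would verify $T^*$ is a nice tree: it is a $\kappa$-tree since each $|T^*(\xi)| \leq |T'(a_\xi)| < \kappa$; it is well-pruned and $<\kappa$-closed by the construction at limits; and it is perfect because at each successor step we keep at least one extension of each immediate successor in $T'$ of each node, and $T'$ itself is perfect, so every node of $T^*$ has two incomparable extensions in $T^*$. Conditions (1) and (2) of Definition \ref{StrongSubtree} hold by construction with witnessing level set $A$: (1) is the invariant $T^*(\xi) \subseteq T(a_\xi)$ (which follows from $T^*(\xi) \subseteq T'(a_\xi) \subseteq T(a_\xi)$), and (2) holds because for $s \in T^*(\xi)$ and $t \in Succ_T(s)$, $t$ lies in $T'$ (as $s \in T'$ and $T'$ has a successor at each immediate $T$-level, being strong in $T$) and then has a unique extension in $T^*(\xi+1) \subseteq T'(a_{\xi+1})$. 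The same argument shows $T^*$ is a strong subtree of $T'$ with witnessing level set $\{a'_{\iota(\xi)} : \xi<\kappa\}$.

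The only mildly delicate point — and the one I would write out carefully — is the limit stage: one must check that the unique upper bound in $T'$ of a branch through $T^*(<\nu)$ really lands on the $T'$-level whose $T$-height is $\bar\nu$ (rather than higher), which is exactly the well-prunedness clause for limit levels of $T'$ together with the fact that $\bar\nu = \sup_{\zeta<\nu} a_\zeta$ is a limit of heights of $T'$-levels and hence itself the $T$-height of a $T'$-level; and that $a_\nu$, being in $A'$, is the $T$-height of a further $T'$-level above it, so the final extension step is well-defined and unique. Everything else is the routine bookkeeping of a strong-subtree construction.
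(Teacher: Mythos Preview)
Your approach is exactly the ``straightforward recursive construction'' the paper asserts and leaves unproved. Two small corrections to make it go through cleanly: first, $root(T^*)$ should be any node of $T'$ at $T$-height $a_0$, not $root(T')$, since $a_0$ need not equal $a'_0$; second, in the successor step the extension of an immediate $T'$-successor $t$ of $s$ up to $T$-height $a_{\xi+1}$ inside $T'$ is \emph{not} in general unique (because $\iota(\xi+1)$ may exceed $\iota(\xi)+1$ and $T'$ is perfect), so you should simply \emph{choose} one such extension rather than invoke well-prunedness for uniqueness. With those fixes the verification of both strong-subtree conditions (over $T$ with level set $A$, and over $T'$ with level set $\{\iota(\xi):\xi<\kappa\}$) goes through as you describe. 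Also watch the notation: in the paper $T'(a_\xi)$ means the $a_\xi$-th level of $T'$, not $T'\cap T(a_\xi)$; you want the latter, which is $T'(\iota(\xi))$.
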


\begin{cor}\label{tailconeapp}
Let $0<i<d\in \omega$, $\delta<\kappa$ be given and assume $HL^{tc}(i,\delta,\kappa)$. For any sequence of nice trees $\langle T_j: j<d\rangle$, $f: \Pi_{j<d} T_j\to \delta$ and $B\subset d$ with $|d\backslash B|=i$, there exist strong subtrees $\langle T'_j\subset T_j: j<d\rangle$ sharing the same witnessing level subset satisfying the following: 

For any $\bar{t}=\langle t_j: j\in B \rangle\in \Pi_{j\in B} T'_j$ with $\xi=\max \{ht_{T_j'}(t_j): j\in B\}$ and any level sequence $\bar{s}\in \Pi_{j\in d-B} T'_j$ with height greater than $\xi$, $f(\bar{s}\cup \bar{t})=f((\bar{s}\restriction' \xi+1) \cup \bar{t})$. Note that here we do not require $\bar{t}$ to be a level sequence.
\end{cor}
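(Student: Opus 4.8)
The plan is to reduce the $d$-dimensional situation to an $i$-dimensional instance of tail cone homogeneity, where $i = |d\setminus B|$, by ``freezing'' the coordinates in $B$ at all possible nodes and colorings. First I would fix, without loss of generality, that $d\setminus B = \{0,1,\dots,i-1\}$ after relabeling (or simply work with $d\setminus B$ as the index set for the $i$-dimensional statement). For each $j\in B$, the tree $T_j$ has at most $\kappa$ many nodes, and for each choice of a tuple $\bar t = \langle t_j : j\in B\rangle \in \Pi_{j\in B}T_j$ (not necessarily a level sequence) sitting below some level $\eta<\kappa$, together with the restriction $\bar t \restriction \xi+1$ for each $\xi$, the function $f$ restricted to $\{\bar s \cup \bar t\}$ gives a coloring of $\Pi_{j\in d\setminus B} T_j$ into $\delta$. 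Since there are only $\kappa$ many pairs $(\bar t, \eta)$ with $\bar t \in \Pi_{j\in B} T_j(<\eta)$ (each $T_j(<\eta)$ has size $<\kappa$, and $\kappa^{<\kappa}$-many such tuples would be too many in general, but we only need to index by a single ordinal: enumerate all such tuples in a sequence of length $\kappa$, using that $\kappa$ is inaccessible so $|\Pi_{j\in B}T_j(<\eta)| < \kappa$ for each $\eta$ and there are $\kappa$ many $\eta$). This gives a $\kappa$-indexed family of colorings $\langle f_m : \Pi_{j\in d\setminus B} T_j \to \delta : m<\kappa\rangle$, to which I would apply $HL^{tc}(i,\delta,\kappa)$.

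The application of $HL^{tc}(i,\delta,\kappa)$ produces strong subtrees $\langle T'_j \subset T_j : j\in d\setminus B\rangle$ sharing a common witnessing level set $A$ such that for each $m<\kappa$ and each level sequence $\bar s \in \Pi_{j\in d\setminus B} T'_j$ of height $\geq m+1$, we have $f_m(\bar s) = f_m(\bar s \restriction m+1)$. To get the strong subtrees on the coordinates $j\in B$ as well, I would apply Lemma \ref{trimming} to thin each $T_j$ (for $j\in B$) to a strong subtree $T'_j$ whose witnessing level set is a cofinal subset of $A$ — in fact I will arrange all $d$ trees to share the same witnessing level set $A'' \subset A$ by first intersecting: since $A$ has size $\kappa$ and each $T_j$ for $j\in B$ is a nice tree, Lemma \ref{trimming} lets me find $T'_j \subset T_j$ with witnessing level set $A''$ for a suitable $A''\in[A]^\kappa$; then re-apply Lemma \ref{trimming} to the already-constructed $T'_j$, $j\in d\setminus B$, to bring them down to witnessing level set $A''$ too. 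The point is that the coloring identities established for the $d\setminus B$ coordinates are preserved under passing to further strong subtrees, because the tail cone conclusion is a statement quantifying over all level sequences in the subtree.

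Finally I would check that the indexing works out: given $\bar t = \langle t_j : j\in B\rangle \in \Pi_{j\in B} T'_j$ with $\xi = \max\{ht_{T'_j}(t_j) : j\in B\}$, the relevant coloring $f(\cdot \cup \bar t)$ should coincide with some $f_m$ where the index $m$ is chosen so that $m+1 = \xi+1$ in the sense needed, i.e.\ I need to set up the enumeration of the freezing data so that the tuple $(\bar t$, level $\xi+1)$ is assigned an index $m$ with $m \leq \xi$. This is the delicate bookkeeping step and the main obstacle: the naive enumeration of all tuples $\bar t \in \Pi_{j\in B} T_j$ in order type $\kappa$ need not respect the height function, so I would instead enumerate in blocks — at stage $\xi<\kappa$ put all tuples $\bar t$ with $\max_j ht(t_j) = \xi$ (there are $<\kappa$ many, since $\kappa$ is inaccessible) into indices in the interval $[\,\sup\text{ of previous block},\, \text{next point of }A\,)$, using that $A$ is unbounded — so that the coloring $f(\cdot\cup\bar t)$ for such $\bar t$ gets an index $m$ which is at most the $\xi$-th element of the level set of the final subtrees, matching what ``$\bar s$ of height greater than $\xi$'' demands after the $\restriction'$ operation. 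Once this alignment is in place, the conclusion $f(\bar s \cup \bar t) = f((\bar s\restriction'\xi+1)\cup\bar t)$ follows directly from the tail cone homogeneity identity $f_m(\bar s) = f_m(\bar s\restriction m+1)$, noting that $\bar s \restriction' \xi+1$ and $\bar s \restriction m+1$ denote the same level sequence once the level sets are aligned. I expect the content of the argument to be entirely in this alignment of enumerations, the rest being a routine unwinding of definitions together with Lemma \ref{trimming}.
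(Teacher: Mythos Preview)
Your overall strategy matches the paper's: freeze the $B$-coordinates to produce a $\kappa$-indexed family of colorings of $\Pi_{j\in d\setminus B} T_j$, apply $HL^{tc}(i,\delta,\kappa)$ to that family, then use Lemma~\ref{trimming} to put all $d$ trees on a common level set. The substance is correct.

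There is, however, a circularity in your proposed bookkeeping. You want to enumerate the tuples $\bar t$ in blocks so that those with $\max_j ht(t_j)=\xi$ receive indices in $[\sup\text{ of previous block},\ \text{next point of }A)$ --- but $A$ is the level set \emph{produced by} the application of $HL^{tc}$, which in turn takes the enumeration (via the family $\langle f_m\rangle$) as input. You cannot define the enumeration in terms of $A$. Relatedly, ``$m\leq\xi$'' is the wrong target: there may be far more than $\xi$ tuples of height $\leq\xi$, so no enumeration can guarantee that.

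The paper resolves this by doing the alignment \emph{after} the tail cone step rather than before. Enumerate $\Pi_{j\in B}T_j=\{\bar t^l:l<\kappa\}$ arbitrarily, apply $HL^{tc}$ to obtain $T^*_j$ ($j\in d\setminus B$) with level set $A'=\{a'_k\}$, and then thin $A'$ to $A=\{a_k\}\subset A'$ recursively so that for each $k$, every $\bar t^l$ with all coordinates of $T_j$-height $\leq a_k$ has $l+1$ at most the position of $a_{k+1}$ in $A'$. This is possible because for each $k$ there are $<\kappa$ such indices $l$ and $A'$ is unbounded. With $A$ in hand, Lemma~\ref{trimming} gives strong subtrees $T'_j\subset T_j$ for all $j<d$ (with $T'_j\subset T^*_j$ when $j\in d\setminus B$) sharing level set $A$, and now the tail cone identity for $f_l$ at $T^*_j$-level $\geq l+1$ translates directly to the desired identity $f(\bar s\cup\bar t)=f((\bar s\restriction'\xi+1)\cup\bar t)$ in the $T'_j$-indexing. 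The alignment is achieved by shrinking the level set post hoc, not by pre-arranging the enumeration.
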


\begin{proof}
Enumerate $\Pi_{j\in B} T_j$ as $\{\bar{t}^l\in \Pi_{j\in B} T_j : l<\kappa\}$. Define $f_l: \Pi_{j\in d-B} T_j\to \delta$ such that $f_l(\bar{s})=f(\bar{t}^l \cup \bar{s})$. Applying $HL^{tc}(i,\delta,\kappa)$, we get a sequence of strong subtrees $\langle T^*_j: j\in d-B\rangle$ sharing the same level subset $A'\in [\kappa]^\kappa$ with increasing enumeration $\langle a'_k: k<\kappa \rangle$ so that for any $k<\kappa$, any level sequence $\bar{s}\in \Pi_{j\in d-B} T^*_j$ with $(\Pi_{j\in d-B} T_j)$-height greater than $a'_k$, $f_k (\bar{s})=f_k(\bar{s}\restriction^* k+1)=f_k(\bar{s}\restriction a'_{k+1})$. Thin $A'$ to $A\subset A'$ with increasing enumeration $\langle a_k: k<\kappa\rangle$ such that for any $\bar{t}^i\in \Pi_{j\in B} T_j (\leq a_k)$, any level sequence $\bar{s}\in \Pi_{j\in d-B} T_j^*$ with $(\Pi_{j\in d-B} T_j)$-height greater or equal to $a_{k+1}$, $f(\bar{t}^i\cup \bar{s})=f_i(\bar{s})=f_i(\bar{s}\restriction a_{k+1})=f(\bar{t}^i \cup (\bar{s}\restriction a_{k+1}))$.
 
Apply Lemma \ref{trimming} to get strong subtrees $T'_j\subset T_j$ for $j<d$ with $A$ being the common witnessing level set and whenever $j\in d-B$, $T'_j\subset T_j^*$. It is now easy to verify that these trees are as desired.
\end{proof}

\begin{thm}\label{DimInduct}
Let $d\in \omega$ and assume $HL^{tc}(d,\delta,\kappa)$. Then $HL(d+1,\delta,\kappa)$ holds.
\end{thm}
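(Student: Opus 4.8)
The plan is to deduce the $(d+1)$-dimensional strong subtree Halpern--L\"auchli theorem from the $d$-dimensional tail cone version by ``peeling off'' one coordinate. Given nice trees $\langle T_j : j < d+1\rangle$ and a coloring $f : \Pi_{j<d+1}T_j \to \delta$, I would first apply Corollary \ref{tailconeapp} with $B = \{d\}$ (so $i = d$, $d\setminus B$ has size $d$): this yields strong subtrees $\langle T'_j : j < d+1\rangle$ sharing a common witnessing level set such that the color $f(\bar s \cup \langle t_d\rangle)$ of a level sequence $\bar s$ in $\Pi_{j<d}T'_j$ together with a node $t_d \in T'_d$ stabilizes once the height of $\bar s$ exceeds $ht(t_d)$; concretely $f(\bar s \cup \langle t_d\rangle) = f((\bar s\restriction' ht(t_d)+1)\cup\langle t_d\rangle)$. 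So the value only depends on $t_d$ and on the level-$(ht(t_d)+1)$ truncation of $\bar s$ — a bounded amount of information in the first $d$ coordinates.

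Next I would use this stabilization to define, on the first $d$ trees, an induced coloring to which $HL^{tc}(d,\delta,\kappa)$ or $HL(d,\delta,\kappa)$ (the latter follows from the former restricted to a constant sequence of colorings, or one can cite Theorem \ref{EquiForm}) applies. The natural move: for each level $\alpha$ in the common level set and each node $t_d \in T'_d(\alpha)$, and each level sequence $\bar r \in \Pi_{j<d}T'_j(\alpha+1)$ lying above-compatibly with... — more cleanly, enumerate $T'_d$ as $\langle t^l_d : l < \kappa\rangle$ and for each $l$ define $g_l : \Pi_{j<d}T'_j \to \delta$ by $g_l(\bar s) = f(\bar s \cup \langle t^l_d\rangle)$ for $\bar s$ of height above $ht(t^l_d)$ (and, say, $0$ below). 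Apply $HL^{tc}(d,\delta,\kappa)$ to $\langle g_l : l<\kappa\rangle$ to obtain strong subtrees $\langle T''_j : j<d\rangle$ sharing a level set $C$ such that $g_l$ is constant on the tail cone above any node of $T''_j$ on level $l+1$. Then thin $C$ and pass to a strong subtree $T''_{d}\subset T'_d$ with witnessing level set (a tail of) $C$ via Lemma \ref{trimming}, so that all $d+1$ trees share a common level set and, on the product of their level sets, the color $f(\bar t)$ for $\bar t$ of level $\xi$ depends only on $t_d$ and on a fixed bounded truncation of $\langle t_j : j<d\rangle$.

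At this point the residual coloring lives on a ``finite-dimensional'' object: by tail cone homogeneity in the first $d$ coordinates the value of $f$ on $\Pi_{j<d+1}T''_j(\xi)$, for $\xi$ large, is a function of $t_d$ alone — i.e. we have reduced to a $1$-dimensional coloring of $T''_d$ — and then $HL^{asym}(1,\delta,\kappa)$ (Theorem \ref{WeakCompactness}, valid since $\kappa$ is weakly compact by Remark \ref{WC}), or just the simultaneous version in Remark \ref{SimultaneousVersion}, produces a final strong subtree $T'''_d \subset T''_d$ on which this $1$-dimensional coloring is constant; pulling the common level set back through Lemma \ref{trimming} to all $d+1$ coordinates gives strong subtrees sharing one witnessing level set on which $f$ is constant, which is exactly $HL(d+1,\delta,\kappa)$. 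I would carry the steps out in this order: (i) Corollary \ref{tailconeapp} to stabilize coordinate $d$; (ii) build the family $\langle g_l\rangle$ and apply $HL^{tc}(d,\delta,\kappa)$; (iii) synchronize level sets with Lemma \ref{trimming}; (iv) collapse to a $1$-dimensional coloring and finish with the weakly compact $1$-dimensional result.

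The main obstacle I anticipate is bookkeeping the interaction between the two stabilizations so that a \emph{single} witnessing level set works for all $d+1$ trees simultaneously: after applying Corollary \ref{tailconeapp} the color depends on the truncation of the first $d$ coordinates at level $ht(t_d)+1$, but after applying $HL^{tc}(d,\delta,\kappa)$ to the $g_l$ the relevant truncation level is governed by the \emph{index} $l$ of $t_d$ in an enumeration, not by its tree-height, so one must choose the enumeration of $T''_d$ and thin the level set $C$ compatibly (as in the proof of Corollary \ref{tailconeapp}) so that ``index $l$'' and ``height $\le$ some $a_{l}$'' line up; getting a genuinely constant (not merely tail-constant) color at the end is then a matter of absorbing the finitely-many exceptional bottom levels by passing to $T[\,r\,]$ for a suitable node $r$, exactly as in Case 2 of the proof of Theorem \ref{WeakCompactness}.
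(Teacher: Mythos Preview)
Your plan has a genuine gap at step (iv). After applying $HL^{tc}(d,\delta,\kappa)$ to the family $\langle g_l:l<\kappa\rangle$ you obtain strong subtrees $T''_j$, $j<d$, such that for each $l$ and each $\bar v\in\Pi_{j<d}T''_j(l+1)$ the coloring $g_l$ is constant on the cone above $\bar v$. But this says only that $f(\bar s,t^l_d)=f(\bar s\restriction'' l{+}1,\,t^l_d)$; the right-hand side still depends on $\bar s\restriction'' l{+}1$, which ranges over all of $\Pi_{j<d}T''_j(l+1)$. Different cones may carry different colors, so the value of $f$ on a level sequence is \emph{not} a function of $t_d$ alone, and you have not reduced to a one-dimensional problem on $T''_d$. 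Your earlier stabilization (i) does not rescue this: it tells you the color depends on a (different) truncation of $\bar s$, and combining two truncations still leaves a dependence on the cone containing $\bar s$. No amount of level-set bookkeeping repairs this, because the obstruction is intrinsic: tail cone homogeneity is a statement about each cone separately, not across cones.

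The paper proceeds quite differently after the analogue of your step (i). Rather than iterating tail cone homogeneity, it aims only for the localized $SDHL'(d+1,\delta,\kappa)$ (which suffices by Theorem~\ref{EquiForm}, since $\kappa$ is weakly compact by Remark~\ref{WC}). Having stabilized one coordinate, it parametrizes induced level colorings $f_x$ of the remaining $d$ trees by \emph{branches} $x\in[T'_0]$, applies $DSHL(d,\delta,\kappa)$ to each $f_x$ to obtain a dense-set witness $(\bar t_x,\gamma_x)$, and then uses a pigeonhole on the $\tau$-topology of $[T'_0]$ (there are only $\kappa$ possible witnesses) to find a basic open $N^{T'_0}_s$ containing a $\tau$-dense set of branches with common witness $(\bar t,\gamma)$. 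From these data a single monochromatic somewhere-dense matrix in $\Pi_{j\le d}T'_j$ is built by hand, the step-(i) stabilization being used to propagate the color from lower matrices up to the top one. The move you are missing is this passage to branches and the density argument on $[T'_0]$; that is what substitutes for the unavailable collapse to one dimension.
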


\begin{proof}
The hypothesis implies that $\kappa$ is weakly compact by Remark \ref{WC}. By Theorem \ref{EquiForm} it suffices to show $SDHL'(d+1,\delta,\kappa)$.
Given nice trees $\langle T_j: j\leq d\rangle$ and a coloring $f: \Pi_{j\leq d} T_j \to \delta$, apply Corollary \ref{tailconeapp}, we can find strong subtrees $T_j'$ sharing the same level set such that for any $x\in T_0'$ with $T_0'$-height $\xi<\kappa$ and any level sequence $\bar{y}\in \Pi_{1\leq j\leq d} T_j'(\zeta)$ for some $\zeta>\xi$, then $f(x,\bar{y})=f(x,\bar{y}\restriction' \xi+1)$. Our goal is find a somewhere dense monochromatic matrix for $\Pi_{j\leq d} T_j'$.

Apply Lemma \ref{trimming} to find a sequence of strong subtrees $\langle T_j^* \subset T'_j: j<d \rangle$ with witnessing level set $\{\xi+1: \xi<\kappa\}$ for $1\leq j\leq d$. Notice that for any $\xi<\kappa,  T_j^*(\xi) \subset T_j'(\xi+1)$ and for any $ r\in T_j^*, Succ_{T_j^*} (r) = Succ_{T_j'}(r)$.

For each $x\in [T_0']$, consider the following induced level coloring $f_x: \Pi_{1\leq j\leq d} T_j^* \to \delta$ defined as follows: for any $\chi<\kappa, \bar{y}\in \Pi_{1\leq j\leq d} T^*_j(\chi)$, $f_x (\bar{y})=f(x(\chi), \bar{y})$. Recall $HL^{tc}(d,\delta,\kappa)$ implies $\kappa$ is weakly compact and $HL(d,\delta,\kappa)$ which in turn implies $DSHL(d,\delta,\kappa)$ by Theorem \ref{EquiForm}. Apply $DSHL(d, \delta,\kappa)$ to $\Pi_{1\leq j\leq d} T_j^*$ and $f_x$. We know there exists a level sequence $\bar{t}_x\in \Pi_{1\leq j\leq d} T_j^*$ of height $\nu$ and $\gamma_x<\delta$ such that for any $\nu'\in \kappa\backslash (\nu+1)$ there exists a $\bar{t}_x$-$\nu'$-dense level matrix $\bar{M}$ with $f_x'' \bar{M}=\{\gamma_x\}$. Note that any density notion here is with respect to $\Pi_{1\leq j \leq d} T_j^*$.

Now consider the map $x\in [T_0']\mapsto (\bar{t}_x, \gamma_x)$. This is a partition of $[T_0']$ into $\kappa$ many pieces. Therefore, there exists $(\bar{t},\gamma)$ and $s\in T_0'$ such that $\exists^\tau x\in N^{T_0'}_{s} \ (\bar{t}_x, \gamma_x)=(\bar{t},\gamma)$. Recall Definition \ref{density} for the meaning of $\exists^\tau x\in N^{T_0'}_{s}$. Extending if necessary we might assume $ht_{T_0'}(s)=\beta+1$ and $ht_{\Pi_{1\leq j\leq d} T_j^*} (\bar{t})=\beta$. Let $\bar{t}=\langle t_j:1\leq  j\leq d\rangle$. Hence we have the following:

For each $x\in T_0'[s], \nu<\kappa$, there exist $\nu'<\kappa, \nu' \geq ht_{T_0'}(x)$, $x'\in T_0'(\nu')[x]$ and level matrix $\bar{M}=M_1\times \cdots \times M_d \subset \Pi_{1\leq j\leq d} T_j^*(\nu')[t_j]$ that is $\bar{t}$-$\nu$-dense such that $f'' x' \times \bar{M} =\{\gamma\}$.

Now enumerate the immediate successors in $T_0'$ of $s$ as $\{s_k: k\leq \mu \}$. Note $s_\mu$ is the last element in this enumeration. Recursively we build level matrices with increasing heights $\{\bar{M}_k \subset \Pi_{1\leq j\leq d} T^*_j: k\leq \mu\}$ and $\{s_k'\in T'_0[s_k]: k\leq \mu \}$ such that 
\begin{enumerate}
\item $\bar{M}_0$ is $\bar{t}$-$(\beta+1)$-dense in $\Pi_{1\leq j\leq d} T^*_j$ and $\forall k\leq \mu $, $\bar{M}_k$ is $\bar{t}$-$\bar{\alpha}$-dense in $\Pi_{1\leq j \leq d} T_j^*$ where $\bar{\alpha}=\sup_{l<k} ht_{\Pi_{1\leq j \leq d} T_j^*} (\bar{M}_l)$;
\item for all $k\leq \mu $, $ht_{T_0'}(s_k')=ht_{\Pi_{1\leq j\leq d} T^*_j}(\bar{M}_k)$;
\item for all $k\leq \mu$, $f(\{s_k'\}\times \bar{M}_k)=\{\gamma\}$.
\end{enumerate}

Pick level matrix $\bar{N}\subset \bar{M}_\mu $ such that for each $1\leq j\leq d$, $N_j$ is $t_j$-$(\beta +1)$-dense in $T_j^*$ and $\bar{x}\in \bar{N}$ implies that for all $k<\beta \ \exists \lambda \  \bar{x}\restriction^* \lambda \in \bar{M}_k$.

We now check that $\{s_k': k\leq \mu\}\times \bar{N}$ is $(s, \bar{t})$-$(\beta+2)$-dense matrix in $\Pi_{j\leq d} T_j'$ of color $\gamma$. The density is clear since $T_j^*(\beta+1)[t_j]=T_j'(\beta+2)[t_j], 1\leq j \leq d$. For any $k\leq \beta$ and $\bar{y}\in \bar{N}$, we need to show $f(s'_k,\bar{y})=\gamma$. If $k=\mu$ we are done. If $k<\mu $, by the choice of $\bar{N}$, there exists $\lambda<\kappa$ such that $\bar{x}=\bar{y}\restriction^* \lambda \in \bar{M}_k$. Note that $f(s'_k, \bar{x})=\gamma$.
We have $ht_{T'_0}(s'_k)=ht_{\Pi_{1\leq j\leq d} T^*_j}(\bar{x})=\lambda$ so $ht_{\Pi_{1\leq j\leq d} T'_j}(\bar{x})=\lambda+1$. Since $\bar{y}$ is also a level sequence in $\Pi_{1\leq j\leq d}T_j'$, we know $f(s_k', \bar{y})= f(s_k', \bar{y}\restriction' \lambda+1)=f(s_k', \bar{x})=\gamma$.
\end{proof}

It is a reasonable question to ask if the tail cone version of the Halpern-Läuchli theorem is true at all assuming $\kappa$ is some large enough cardinal.
In particular: 
\begin{question}
Is $HL^{tc}(1,\delta,\kappa), \delta<\kappa$ true in ZFC for some large enough cardinal $\kappa$? 
\end{question}

A positive answer will enable us to establish results of higher dimension via the inductive argument as in Theorem \ref{DimInduct}.

\section{A polarized partition relation for large saturated linear orders}\label{polarize}

We show that tail cone homogeneity can be used to prove a higher analog of a theorem by Laver \cite{MR754925} (the 2-dimensional case is due to Galvin) regarding a polarized partition relation for dense linear orders.


\begin{notation}
We use $\forall^{<\kappa}$ to mean ``for all except for $<\kappa$ many''. For example, let $\varphi(x)$ be a formula with a free variable $x$, then $\forall^{<\kappa} x \ \varphi(x)$ means $|\{x: \neg\varphi(x)\}|<\kappa$.
\end{notation}

\begin{lem}\label{AlmostAll}
Let $d\in \omega, d\geq 2$, $\langle T_j: j<d\rangle$ nice trees and a coloring $f: \Pi_{j<d}T_j\to \delta$ for some $\delta<\kappa$ be given. Assume $HL^{tc}(d-1,\delta,\kappa)$. There exist strong subtrees $T_j^*\subset T_j$ not necessarily sharing the same level set with respect to $T_j, j<d$, such that there exists $\gamma\in \delta$, $(\forall^{<\kappa} x_0\in T^*_0) \cdots (\forall^{<\kappa} x_{d-1}\in T^*_{d-1}) \ f(x_0,\cdots, x_{d-1})=\gamma$.

\end{lem}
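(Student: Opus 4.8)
The plan is to induct on $d$ and peel off one coordinate at a time, using $HL^{tc}(d-1,\delta,\kappa)$ to control the behavior of $f$ on the remaining $d-1$ coordinates once the last coordinate is fixed. Concretely, for each $x\in T_{d-1}$ consider the induced coloring $f_x\colon \Pi_{j<d-1}T_j\to\delta$ where on each level $\chi<\kappa$ one sets $f_x(\bar y)=f(\bar y, x\restriction\chi)$ (restricting $x$ to its length-$\chi$ initial segment), exactly as in the proof of Theorem \ref{DimInduct} and Corollary \ref{tailconeapp}. Enumerating $T_{d-1}=\{x^l:l<\kappa\}$ gives a $\kappa$-indexed family of colorings $\langle f_{x^l}:l<\kappa\rangle$ of $\Pi_{j<d-1}T_j$, so the hypothesis $HL^{tc}(d-1,\delta,\kappa)$ (in its full diagonal form, i.e.\ the conclusion of Theorem \ref{Tail cone homogeneity in full generality}) applies and yields strong subtrees $\langle T_j^{\sharp}\subset T_j:j<d-1\rangle$ sharing a common witnessing level set $A=\{a_i:i<\kappa\}$ such that for each $i<\kappa$, each $\bar v\in\Pi_{j<d-1}T_j^{\sharp}(i+1)$, and each level sequence $\bar v'$ above $\bar v$ in $\Pi_{j<d-1}T_j^{\sharp}$, one has $f_{x^i}(\bar v')=f_{x^i}(\bar v)$. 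In particular, for each $i<\kappa$ the color $f(\bar v', x^i\restriction ht(\bar v'))$ depends only on $\bar v=\bar v'\restriction (i+1)$, hence is determined by a value $g(i,\bar v)<\delta$.

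Next I would want to make this stabilization uniform in the $T_{d-1}$-coordinate as well. The key observation is that we have only $\delta<\kappa$ many colors and, after passing to a strong subtree of $T_{d-1}$ via Lemma \ref{trimming} so that its witnessing level set is contained in $A$, we can align the level of $x^i$ with the level of the tuples $\bar v$. For a fixed level $a_{i+1}$ and a fixed $\bar v\in\Pi_{j<d-1}T_j^{\sharp}(i+1)$, the function sending a node $x\in T_{d-1}$ on level $a_{i+1}$ to the eventual color $f(\bar v', x')$ (for $\bar v'$ above $\bar v$, $x'$ above $x$) is a coloring of $<\kappa$ many nodes into $\delta$ colors; but more importantly, running $HL^{tc}(1,\delta,\kappa)$ — which follows from $HL^{tc}(d-1,\delta,\kappa)$ since $d-1\geq 1$, or simply is available under the ambient hypothesis — on $T_{d-1}$ against the family of colorings indexed by pairs $(i,\bar v)$ lets us pass to a further strong subtree $T_{d-1}^{*}$ of $T_{d-1}$ on which, for each such pair, the color is eventually constant down each cone. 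Putting the two stabilizations together and then trimming all trees via Lemma \ref{trimming} to a common tail of their level sets (or just leaving them with possibly different level sets, as the statement permits), we obtain strong subtrees $T_j^{*}$ and a function assigning, to each ``sufficiently high'' choice of a node in each coordinate, a color that no longer changes when any single coordinate is pushed further up. A pigeonhole over $\delta$ then isolates a single dominant color $\gamma$.

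The last step is to convert ``eventually constant down every cone, in every coordinate separately'' into the $\forall^{<\kappa}$ statement $(\forall^{<\kappa}x_0\in T_0^{*})\cdots(\forall^{<\kappa}x_{d-1}\in T_{d-1}^{*})\,f(x_0,\dots,x_{d-1})=\gamma$. This is a bookkeeping argument: the nodes of $T_j^{*}$ that are ``too low'' to have stabilized — i.e.\ below the level at which the relevant cone-color settled to $\gamma$ — form a subtree of height $<\kappa$ when $\gamma$ is the dominant color, hence a set of size $<\kappa$ (using that each level of a nice tree has size $<\kappa$ and $\kappa$ is inaccessible); the remaining ``high enough'' nodes, for all but $<\kappa$ choices in each successive coordinate, force the value $\gamma$ by the stabilization invariants, precisely because the defining property of $HL^{tc}$ says the color of a tuple equals the color of its restriction to a fixed low level.

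\textbf{Main obstacle.} The hard part will be the second paragraph: coordinating the stabilization in the ``peeled-off'' coordinate $T_{d-1}$ with the stabilization already obtained in the first $d-1$ coordinates, so that the surviving color genuinely does not change when any one coordinate is raised. The subtlety is that $HL^{tc}$ as stated gives a tail-cone invariance indexed by the level, and to invoke it against a $\kappa$-indexed family of colorings of $T_{d-1}$ one must arrange that each such coloring is well-defined (e.g.\ a level coloring after suitable trimming) and that the resulting common level set can be reconciled with $A$; getting all the level sets lined up — or carefully tracking which trees end up with which level sets, as the statement explicitly allows them to differ — is where the real care is needed, and I expect this is exactly why the statement does not demand a common witnessing level set.
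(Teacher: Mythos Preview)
Your outline has a genuine gap at the step you label ``a pigeonhole over $\delta$ then isolates a single dominant color $\gamma$.'' After your two stabilization passes you know that moving any single coordinate higher does not change the value of $f$; but the value still depends on the low-level data in all $d$ coordinates simultaneously. Concretely, after your step one you have a function $g(i,\bar v)$ with $\bar v\in\Pi_{j<d-1}T_j^\sharp(i+1)$, and after your step two you have a further function depending on a low node in $T_{d-1}^*$ as well. Reducing this residual $d$-dimensional coloring to a single value is not a pigeonhole; it is exactly a $d$-dimensional Halpern--L\"auchli problem on the low-level tuples, and nothing in your outline addresses it. (There is also a smaller issue: your definition of $f_x$ mixes nodes and branches. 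If $x\in T_{d-1}$ is a node then $x\restriction\chi$ is undefined for $\chi>ht(x)$, so $f_x$ is not a total level coloring; the analogue in Theorem~\ref{DimInduct} uses branches $x\in[T'_0]$, not nodes.)

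The paper's proof supplies precisely the missing piece: it invokes the dimension boost $HL^{tc}(d-1,\delta,\kappa)\Rightarrow HL(d,\delta,\kappa)$ from Theorem~\ref{DimInduct}. The argument runs in two stages. First, by iterating Corollary~\ref{tailconeapp} $d-1$ times (peeling off one coordinate at a time, as you intended), one passes to strong subtrees $T_j'$ with the property that if $\bar x\in\Pi_{j<d}T_j'$ has strictly increasing $T_j'$-heights with $\xi=ht_{T_0'}(x_0)$, then $f(\bar x)=f(x_0,x_1\restriction'\xi,\dots,x_{d-1}\restriction'\xi)$; i.e.\ any such tuple collapses to a level sequence at the lowest height. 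Second, one applies the full $HL(d,\delta,\kappa)$ to $f$ restricted to $\Pi_{j<d}T_j'$ to obtain strong subtrees $T_j^*\subset T_j'$ sharing a common witnessing level set in $\Pi_{j<d}T_j'$ (hence possibly not in $\Pi_{j<d}T_j$) on which all level sequences get a single color $\gamma$. Combining, every tuple in $\Pi_{j<d}T_j^*$ with strictly increasing heights gets color $\gamma$; the $\forall^{<\kappa}$ quantifiers then fall out, since for fixed $x_0,\dots,x_{k-1}$ only the $<\kappa$ many $x_k$ of $T_k^*$-height $\le ht_{T_{k-1}^*}(x_{k-1})$ can fail. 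The step you were missing is the second one: you need $HL(d,\delta,\kappa)$, not a pigeonhole, to extract the single color.
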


\begin{proof}

We first shrink the trees to satisfy the following claim.

\begin{claim}
There exist strong subtrees $T_j'\subset T_j, j<d$ not necessarily sharing the same level set such that for any $x\in T_0'$ with $ht_{T_0'}(x)=\xi$ and any $y_j\in T_j'$ whose heights are increasing with $ht_{T_1'}(y_1)> \xi$, 
\begin{equation*}
f(x,y_1,\cdots , y_{d-1})=f(x, y_1\restriction' \xi, \cdots, y_{d-1}\restriction' \xi).
\end{equation*} 

\end{claim}
\begin{proof}[Proof of the claim]
As the restrictions taken in the following proof are with respect to different subtrees, we remind the reader of Notation \ref{restriction}.
We shrink the trees in $d$ stages. For each $j<d$, let $T^0_j=T_j$. Assume at stage $0\leq i < d-1$, we already have strong subtrees $\{T^i_j: j<d\}$ such that if $\bar{x}\in \Pi_{j<d} T^i_j$ is such that its respective heights are increasing with $\xi=ht_{T^i_{d-1-i}}(x_{d-1-i})$, then \begin{equation}\label{IH}
\begin{multlined}
f(x_0,\cdots, x_{d-1-i}, x_{d-i},\cdots,  x_{d-1})= f(x_0, \cdots, x_{d-1-i}, x_{d-i}\restriction^i \xi, \cdots, x_{d-1}\restriction^i \xi).
\end{multlined}
\end{equation}

Stage $i+1: $ Apply Corollary \ref{tailconeapp} and find strong subtrees $U_j\subset T^i_j$ sharing the same level set $A$ with respect to $\Pi_{j<d} T^i_j$ such that for any $\bar{x}\in \Pi_{j<d-1-i} U_j$ with increasing heights and $\xi=ht_{U_j}(x_{d-2-i})$ and any $\chi>\xi, \bar{y}\in \Pi_{d-1-i\leq j<d} U_{j}(\chi)$, $f(\bar{x}^\frown \bar{y})=f(\bar{x}^\frown (\bar{y}\restriction_{\Pi_{d-1-i\leq j<d}U_j} \xi+1))$.
Now let $T^{i+1}_j = U_j$ for $j<d-1-i$ and for $d-1-i\leq j<d$ apply Lemma \ref{trimming} to get strong subtrees $T^{i+1}_j$ of $ U_j$ with witnessing level set $\{\xi+1: \xi<\kappa\}$, hence $T^{i+1}_j(\xi)\subset U_j(\xi+1)$ for all $\xi<\kappa$. We need to verify that for any
$\bar{x}\in \Pi_{j<d} T^{i+1}_j$ whose respective heights are increasing with $\xi=ht_{T^{i+1}_{d-2-i}}(x_{d-2-i})$, then
 \begin{equation*}
 \begin{multlined}
f(x_0,\cdots, x_{d-2-i}, x_{d-1-i},\cdots,  x_{d-1})=  \\ f(x_0, \cdots, x_{d-2-i}, x_{d-1-i}\restriction^{i+1} \xi, \cdots, x_{d-1}\restriction^{i+1} \xi).
 \end{multlined}
\end{equation*}
Let $\xi'=ht_{T^{i+1}_{d-1-i}}(x_{d-1-i})$.
Note that 
\begin{equation*}
\begin{multlined}
f(x_0,\cdots, x_{d-2-i}, x_{d-1-i},\cdots,  x_{d-1})= \\ f(x_0, \cdots, x_{d-2-i}, x_{d-1-i},x_{d-i} \restriction^{i+1} \xi', \cdots, x_{d-1}\restriction^{i+1}\xi').
\end{multlined}
\end{equation*}
The reason is that $\langle T^{i+1}_j: d-1-i\leq j<d\rangle$ share the same level set with respect to $\langle T^i_j: d-1-i\leq j<d \rangle$ so it follows from the induction hypothesis (\ref{IH}).
But now it follows that 
\begin{equation*}
\begin{split}
f(x_0, \cdots, x_{d-2-i}, x_{d-1-i},x_{d-i} \restriction^{i+1} \xi', \cdots, x_{d-1}\restriction^{i+1} \xi')=\\
f(x_0, \cdots, x_{d-2-i}, x_{d-1-i},x_{d-i} \restriction_{U_{d-i}} \xi'+1, \cdots, x_{d-1}\restriction_{U_{d-1}}\xi'+1)=\\
f(x_0, \cdots, x_{d-2-i}, x_{d-1-i}\restriction_{U_{d-1-i}} \xi+1,x_{d-i} \restriction_{U_{d-i}} \xi+1, \cdots, x_{d-1}\restriction_{U_{d-1}}\xi+1)=\\
f(x_0, \cdots, x_{d-2-i}, x_{d-1-i}\restriction^{i+1} \xi, \cdots, x_{d-1}\restriction^{i+1} \xi),
\end{split}
\end{equation*}
since for all $d-1-i\leq j <d, \xi<\kappa$, $T^{i+1}_j (\xi) \subset U_j(\xi+1)$. This finishes the inductive construction. Finally let $T_j'=T_j^{d-1}, j<d$ and it is straightforward to see that they are as desired.

\end{proof}

Now consider the coloring $f$ restricted to $\Pi_{j<d}T_j'$. Apply $HL(d,\delta,\kappa)$, which we have by Theorem \ref{DimInduct}. We can find strong subtrees $T_j^*$ of $T_j'$ for all $j<d$ sharing the same witnessing level set with respect to $\langle T_j': j<d\rangle$ such that $f'' \bigcup_{\xi<\kappa}\Pi_{j<d} T^*_j(\xi)=\{\gamma\}$ for some $\gamma<\delta$. To see this is desired, given $\bar{x}\in \Pi_{j<d} T^*_j$ whose respective heights are increasing and $\xi=ht_{T_0^*}(x_0)$, we see that 
\begin{equation*}
f(x_0,x_1,\cdots, x_{d-1})=f(x_0, x_1\restriction^* \xi,\cdots, x_{d-1}\restriction^* \xi)=\gamma
\end{equation*}
 as desired.

\end{proof}

\begin{thm}\label{mainproof}
Let $d\in \omega$ and suppose $HL^{tc}(d,<\kappa,\kappa)$. Then \begin{equation}\label{polarizerelation}
\begin{pmatrix}
\eta_\kappa \\
\vdots \\
\eta_\kappa
\end{pmatrix} \to \begin{pmatrix}
\eta_\kappa \\
\vdots \\
\eta_\kappa
\end{pmatrix}^{\underbrace{1,\cdots, 1}_{d+1}}_{<\kappa, (d+1)!}
\end{equation}
\end{thm}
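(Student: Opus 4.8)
The goal is to prove the polarized partition relation $\binom{\eta_\kappa}{\vdots}\to\binom{\eta_\kappa}{\vdots}^{1,\dots,1}_{<\kappa,(d+1)!}$ with $d+1$ rows, given $HL^{tc}(d,<\kappa,\kappa)$. The first move is to translate the statement about $\mathbb{Q}_\kappa$ into a statement about nice trees: since $\kappa^{<\kappa}=\kappa$ (as $\kappa$ is inaccessible), any nice tree $T$ with $(T,<_{lex})\simeq\mathbb{Q}_\kappa$, so it suffices to work with $d+1$ disjoint copies of a fixed nice tree $T$ (say $2^{<\kappa}$) and show: given $\delta<\kappa$ and $f:\Pi_{j\le d}T_j\to\delta$, there are strong subtrees $T'_j\subseteq T_j$, each of $<_{lex}$-type $\eta_\kappa$, such that $|f''\Pi_{j\le d}T'_j|\le (d+1)!$.

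\smallskip

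Next I would split the product $\Pi_{j\le d}T_j$ according to the relative order of the heights of the coordinates. Given $\bar{x}=(x_0,\dots,x_d)$ with all heights distinct, there is a permutation $\pi$ of $\{0,\dots,d\}$ putting the heights in increasing order; there are $(d+1)!$ such permutations. For each $\pi$ define $f_\pi$ to be $f$ restricted to the tuples ordered by $\pi$, and the strategy is to produce strong subtrees so that on each of the $(d+1)!$ ``order regions'' the coloring is constant — together with a separate argument handling tuples where some heights coincide, which we can fold into the same regions by passing to subtrees with disjoint level sets (so that in the final trees no two coordinates ever share a level). This is exactly the setup where Lemma~\ref{AlmostAll} applies: for a fixed order region, $HL^{tc}(d,<\kappa,\kappa)$ (hence $HL^{tc}(d-1,\delta,\kappa)$, and also $HL(d+1,\delta,\kappa)$ via Theorem~\ref{DimInduct}) gives strong subtrees $T^*_j$, not necessarily sharing a level set, and a color $\gamma_\pi$ with $(\forall^{<\kappa}x_0)\cdots(\forall^{<\kappa}x_d)\,f_\pi(\bar{x})=\gamma_\pi$.

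\smallskip

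The heart of the argument is then to upgrade ``for all but $<\kappa$ many in each coordinate, the color is $\gamma_\pi$'' to ``genuinely all tuples get color $\gamma_\pi$'' by passing to further strong subtrees, and to do this simultaneously for all $(d+1)!$ permutations. Here I would iterate: handle the $(d+1)!$ order regions one at a time, each time applying Lemma~\ref{AlmostAll} (in the subtrees produced so far), then use a fusion-style recursive construction along the tree to build, from the ``almost all'' statement, an honest strong subtree on which the region is monochromatic of color $\gamma_\pi$ — the point being that the bad set in each coordinate has size $<\kappa$, so at each of the $\kappa$ levels of the construction we can, using $\kappa^{<\kappa}=\kappa$ and the $<\kappa$-closure of nice trees, choose successors avoiding all previously-committed bad nodes. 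After cycling through all $(d+1)!$ permutations we apply Lemma~\ref{trimming} to align the witnessing level sets of all $T'_j$ to a common set $A\in[\kappa]^\kappa$, obtaining strong subtrees of $<_{lex}$-type $\eta_\kappa$ on which $f$ takes at most one value per order region, hence at most $(d+1)!$ values total.

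\smallskip

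\textbf{The main obstacle.} The delicate point is passing from Lemma~\ref{AlmostAll}'s ``$\forall^{<\kappa}$ in each coordinate'' to an actual monochromatic strong subtree, while doing so for all $(d+1)!$ permutations without the regions interfering: shrinking a tree to fix one order region may reintroduce non-monochromaticity problems controlled in another region, so the recursion must be organized so that each ``almost all'' statement is re-derived in the current (already-shrunk) trees before the next fusion step, and one must check that the bad sets remain of size $<\kappa$ and that the fusion can always avoid them at limit levels — this is where the $<\kappa$-closure of nice trees and the inaccessibility of $\kappa$ are essential.
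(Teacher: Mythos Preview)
Your plan is essentially the paper's: apply Lemma~\ref{AlmostAll} once for each of the $(d{+}1)!$ permutations (the paper does this upfront in a single Claim) to obtain strong subtrees $T'_j$ on which all the nested $\forall^{<\kappa}$ statements hold simultaneously, and then build the final trees by a $\kappa$-stage recursion that at each step avoids the fewer-than-$\kappa$ forbidden nodes determined by the already-committed partial tuples.

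Two minor divergences worth flagging. The paper carries out the recursion in \emph{one pass} treating all permutations together: it records a construction order $\sqsubset$ (a node precedes another if it was picked at an earlier stage), maintains the invariant that for every already-chosen partial chain $t_0\sqsubset\cdots\sqsubset t_l$ the residual $\forall^{<\kappa}$ tail still holds, and concludes that each full tuple gets color $\gamma_\pi$ where $\pi$ is its $\sqsubset$-type --- so the $(d{+}1)!$ regions are \emph{construction}-order regions, not height regions, and the output is only a nice (not necessarily strong) subtree. Your permutation-at-a-time iteration also works, and in fact your stated obstacle is not one: shrinking to subtrees can only remove tuples, so monochromaticity already achieved is never lost, and each $\forall^{<\kappa}$ chain restricts verbatim to any subtree, so there is no need to ``re-derive'' anything via a fresh application of Lemma~\ref{AlmostAll} at each step. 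Finally, note a small inconsistency in your outline: you correctly want pairwise \emph{disjoint} level sets to eliminate height ties, yet you finish by invoking Lemma~\ref{trimming} to pass to a \emph{common} level set --- drop that last step and keep the level sets disjoint.
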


\begin{proof}
Given $f: \Pi_{j<d+1}2^{<\kappa} \to \delta$ for some $\delta<\kappa$, our aim is to find for $j<d+1$, $T_j\subset 2^{<\kappa}$ such that $(T_j, <_{lex})$ contains a copy of a $\kappa$-saturated linear order and $|f'' \Pi_{j<d+1}T_j |\leq (d+1)!$.

\begin{claim}
There exists strong subtrees $T_j'\subset 2^{<\kappa}, j<d+1$, not necessarily sharing the same witnessing level set with respect to $2^{<\kappa}$, such that for any permutation $\pi: d+1\to d+1$, there exists $\gamma_\pi<\delta$ such that
\begin{equation}\label{Cleaning}
(\forall^{<\kappa} t_0\in T_{\pi(0)}') \cdots (\forall^{<\kappa} t_d\in T_{\pi(d)}') \ f(t_{\pi^{-1}(0)},\cdots,t_{\pi^{-1}(d)})=\gamma_\pi.
\end{equation}
 Furthermore, by possibly throwing away $<\kappa$ many nodes from each tree, we can assume the first quantification of Equation \ref{Cleaning} is actually $\forall t_0\in T_{\pi(0)}'$
\end{claim}
\begin{proof}[Proof of the claim]
Enumerate all permutations of $d+1$ and iteratively apply Lemma \ref{AlmostAll} finitely many times.
\end{proof}

Given the claim with $\langle T_j': j<d+1\rangle, \{\gamma_\pi: \pi \in Perm(d+1)\}$ where $Perm(S)$ is the collection of permutations of set $S$, we can recursively build nice subtrees $\langle T_j: j<d+1\rangle$. Recall the definition of a nice tree as in Definition \ref{tree}. Note that we do not require $T_j'$ to be a strong subtree of $T_j$ for $j<d+1$.

We will construct these trees in $\kappa$ stages so that each stage we only pick $<\kappa$ many elements. For $j< d+1$ and each $\beta<\kappa$, let $\{s_\beta^{j,k}: k<i_{\beta,j}\}$ be the set of nodes picked in $T_j'$ at stage $(d+1)\beta+j$. 
Suppose for some $p\leq d$ and some ordinal $\alpha<\kappa$ the construction reaches stage $(d+1)\alpha+p$. Let $A^j_{\alpha,p}=\{ (s^{j,k}_{\beta})_{k<i_{\beta,j}}: \beta<\alpha\} $ if $ j\geq p$ and $A^j_{\alpha,p}=\{ (s^{j,k}_{\beta})_{k<i_{\beta,j}}: \beta\leq \alpha\}$ if $ j<p$. Note that $i_{\beta, j}<\kappa$ if defined.

Define a partial binary relation $\sqsubset$ on $\bigcup_{j<d} T_j$ such that $s\sqsubset s'$ iff there exist $\beta, \beta'<\kappa, j \neq j' < d, k<i_{\beta,j}, k'<i_{\beta',j'} $ such that $s=s^{j,k}_\beta, s'=s^{j',k'}_{\beta'}$ and $\beta<\beta'$ or ($\beta=\beta'$ and $j<j'$). Notice that if $s\sqsubset s'$, then $s$ and $s'$ must have been picked during the construction. Intuitively one element precedes the other if it appears earlier in the construction. Further suppose during the construction we ensure 
\begin{enumerate}
\item  $ (\forall j<d) (\forall \beta<\alpha) (\forall k<i_{\beta,j}) \ s^{j,k}_\beta$ has $T_j'$-height greater than $\beta$ and $(\forall j<p) (\forall k<i_{\alpha,j})\ s^{j,k}_\alpha$ has $T_j'$-height greater than $\alpha$;
\item \label{Second} 
For 
\begin{itemize}
\item any $l\leq d$
\item any permutation $\langle j_0, \cdots, j_l\rangle$ of a subset of $d+1$ of size $l+1$
\item any $t_0 \in T'_{j_0}, \cdots, t_l\in T'_{j_l}$ satisfying that for $t_0\sqsubset t_1\sqsubset \cdots \sqsubset t_l$, and any $\pi\in Perm(d+1)$ with $\pi(w)=j_w$ for $w\leq l$ (in other words, $\pi$ extends the partial permutation $\langle j_0, \cdots, j_l\rangle$)
\end{itemize}
 we have
\begin{equation}\label{Forbidden}
(\forall^{<\kappa} t_{l+1} \in T'_{{\pi(l+1)}}) \cdots (\forall^{<\kappa} t_{d}\in T'_{{\pi(d)}}) f(t_{\pi^{-1}(0)},\cdots, t_{\pi^{-1}(l)}, t_{\pi^{-1}(l+1)},\cdots, t_{\pi^{-1}(d)})=\gamma_\pi.
\end{equation}
In particular this implies 
\begin{equation*}
(\forall \bar{x}\in \Pi_{j<d}A^j_{\alpha,p}) \ f(\bar{x})\in \{\gamma_\pi: \pi\in Perm(d+1)\}.
\end{equation*}
\end{enumerate}
At stage $(d+1)\alpha + p$: Enumerate the terminal nodes in $A^p_{\alpha,p}$ or top nodes for branches through $A^p_{\alpha,p}$ as $\{w_j: j<\chi\}$. For each $w_j$ we aim to add incompatible $w^0_j, w^1_j$ extending it of height $>\alpha$ satisfying the 2 requirements above. This is possible since for $l<d$ and for each $t_0, \cdots, t_l \in \bigcup_{j<d, j\neq p} A^j_{\alpha,p}$ satisfying that $t_0\sqsubset t_1\sqsubset \cdots \sqsubset t_l$ and no two nodes belong to the same tree, only $<\kappa$ many nodes are forbidden. Hence in total only $<\kappa$ many nodes are forbidden by (\ref{Forbidden}) and by the remark following (\ref{Cleaning}). Hence we can find such $w_j^0, w_j^1$ above $w_j$ of heights greater than $\alpha$ maintaining (\ref{Second}). $\{w_j^{k}: j<\chi, k<2\}$ will be picked from $T_j'$. In other words, $\{s_{\alpha}^{p,k}: k<i_{\alpha,p}\}= \{w_j^{k}: j<\chi, k<2\}$.

For each $j<d+1$, consider $T_j=\bigcup_{\alpha<\kappa} A^j_{\alpha,0}$. Then by construction each $T_j$ is rooted, perfect, and every maximal branch through $T_j$ is of order type $\kappa$. Thus each $T_j$ is nice. It is easy to see that for any $j<d+1$, $(T_j, <_{lex})$ contains a copy of a $\kappa$-saturated linear order. But now we are done as $f'' \Pi_{j<d+1} T_j \subset\{\gamma_\pi: \pi\in Perm(d+1)\}$.

\end{proof}

\begin{remark}
$(d+1)!$ is the best possible in the theorem above. To see $(d+1)!-1$ is not enough, simply define a coloring on $(2^{<\kappa}, <_{lex})^{d+1}$ with $Perm(d+1)$ being the color set such that whenever $\bar{x}\in (2^{<\kappa}, <_{lex})^{d+1}$ is such that the respective heights are distinct, then it gets color $\pi$ where $\pi\in Perm(d+1)$ satisfies that $ht (x_{\pi(0)})<  \cdots < ht(x_{\pi(d)})$.
\end{remark}

\begin{remark}
For $\triangleleft$ a well ordering of $\eta_\kappa$ of order type $\kappa$ such that any $(x_0,\cdots, x_d) \in \eta_\kappa ^{d+1}$ is associated with a unique type $\pi\in Perm(d+1)$ where $x_{\pi(0)}\triangleleft \cdots \triangleleft x_{\pi(d)}$, then essentially the same proof shows that there exist $X_j\subset \eta_\kappa$ of order type $\eta_\kappa$ for $j\leq d$ such that tuples from $\Pi_{j<d+1} X_j$ of the same type get the same color. 

\end{remark}

\begin{remark}
$
\begin{pmatrix}
\eta_\kappa \\
\eta_\kappa
\end{pmatrix} \to \begin{pmatrix}
\eta_\kappa \\
\eta_\kappa
\end{pmatrix}^{2}_{<\kappa, 2}
$ implies 
$
\begin{pmatrix}
\kappa \\
\kappa
\end{pmatrix} \to \begin{pmatrix}
\kappa \\
\kappa
\end{pmatrix}^{2}_{\omega, 2}.
$
To see this, let $h$ be a bijection between $\kappa$ and $2^{<\kappa}$. Given $f: \kappa\times \kappa\to \omega$, we use $h$ to transfer $f$ to a coloring $g: 2^{<\kappa}\times 2^{<\kappa}\to \omega$. The assumption implies that there exists $T_0, T_1$ that are nice subtrees of $2^{<\kappa}$ such that $|g'' T_0\times T_1|\leq 2$. Now letting $A_0= h^{-1} T_0, A_1 = h^{-1} T_1$ we have $|h'' A_0\times A_1|\leq 2$.
The latter by a Theorem of Todorcevic implies $\neg \square(\kappa)$ (see for example Theorem 6.3.2 in \cite{MR2355670}), which in turn implies $\kappa$ is weakly compact in $L$ by a theorem of Jensen. Note that $HL^{tc}(1,2,\kappa)$ implies $\kappa$ is weakly compact (see Remark \ref{WC}), which in turn implies $\kappa$ is weakly compact in $L$ by downward absoluteness.
\end{remark}

Theorem \ref{main} easily follows from Theorem \ref{Tail cone homogeneity in full generality} and Theorem \ref{mainproof}. One might ask: 
\begin{question}
Is the conclusion of Theorem \ref{mainproof} a consequence of some large cardinal hypothesis? 
\end{question}

Very recently, Dobrinen and Hathaway \cite{DobrinenHathaway2} showed that if $HL^{tc}(d,<\kappa,\kappa)$ holds in the ground model, then after forcing with a poset of size $<\kappa$, $HL^{tc} (d,<\kappa,\kappa)$ remains true.

It was pointed out in \cite{MR2520110} that Theorem \ref{unpo} is not a consequence of any large cardinal hypothesis. More precisely, by a theorem of Hajnal and Komj\'ath \cite{MR1606036}, there exists a forcing of size $\aleph_1$ that adds a linear order $\theta$ of size $\aleph_1$, that is strongly non-Ramsey, in the sense that for any linear ordering $\varphi$, $\varphi \not \to [\theta]^2_{\omega_1}$, namely for any $\omega_1$-coloring of $[\varphi]^2$, any copy of $\theta$ in $\varphi$ gets all colors.

Combining these two observations, it is consistent that 
$\eta_\kappa \not \to (\eta_\kappa)^2_{<\kappa, <\omega_1}$ while $(\ref{polarizerelation})$ holds for all $d\in \omega$.


\section{The Halpern-Läuchli theorem at $\kappa$ does not imply $\kappa$ is weakly compact}\label{NonImp}

In this section, we show that relative to the existence of a measurable cardinal it is possible that $\kappa$ is a strongly inaccessible cardinal but not weakly compact yet $HL(1,\delta,\kappa)$ holds for all $\delta<\kappa$. At the cost of stronger hypothesis, we show it is consistent that for all $d\in \omega$ and $\delta<\kappa,  HL(d,\delta,\kappa)$ holds while $\kappa$ is strongly inaccessible but not weakly compact, in contrast with the situation in the tail cone version of the Halpern-Läuchli theorem and in Theorem 2.5 of \cite{MR2520110} (regarding coloring unordered $m$-sized antichains of a single tree for finite $m$) where any inaccessible $\kappa$ satisfying those theorems must be weakly compact.

Recall the following definitions.

\begin{definition}
Let $\kappa, \lambda, \eta$ be cardinals. $I\subset P(\kappa)$ an ideal on $\kappa$ is \begin{itemize}
\item non-trivial if $\kappa\not \in I$;
\item $\lambda$-complete if for any $\alpha<\lambda$ and $X_i\in I, i<\delta$, $\bigcup_{i<\alpha} X_i\in I$;
\item $\eta$-saturated if $P(\kappa)/I$ has $\eta$-c.c, namely, any collection $X\subset P(\kappa)$ such that for any $A,B\in X$, $A,B\not \in I$ and $A\cap B\in I$, in other words, $A$ and $B$ are almost disjoint modulo $I$, satisfies that $|X|<\eta$. 
\end{itemize}
\end{definition}

For the rest of the section, fix an inaccessible cardinal $\kappa$ and a $\kappa$-complete $\kappa$-saturated ideal $I$ on $\kappa$.

We list some standard facts, which can be found in \cite{MR2768692}. 

\begin{fact}\label{IdealFacts}
Let $G$ be a generic ultrafilter on $\mathbb{P}=P(\kappa)/I$ over $V$ then 
\begin{enumerate}

\item $I$ is precipitous, namely, in $V[G]$, the ultrapower $Ult(V,G)=\{[f]_G: f\in V\}$ is well-founded.
Let $j: V\to M\simeq Ult(V,U)$ be the ultrapower map in $V[G]$ and $M$ is a transitive class.
\item $V[G]\models M^\kappa \subset M$.\label{point}

\item $G$ is $V\textendash \kappa$-complete, meaning for any $\alpha<\kappa$ and any $\langle A_i : i<\alpha\rangle \in V$ with $A_i\in G$ for all $i<\alpha$, $\bigcap_{i<\alpha} A_i \in G$.
\end{enumerate}
\end{fact}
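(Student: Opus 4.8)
The plan is to take the three items in increasing order of difficulty, deriving each from the hypotheses that $I$ is $\kappa$-complete and $\kappa$-saturated together with the genericity of $G$. Item (3) is nearly immediate; items (1) and (2) are the two places where saturation does the real work, and the closure statement (2) is the main obstacle.

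For (3) I would argue by density. Fix $\langle A_i : i<\alpha\rangle\in V$ with $\alpha<\kappa$ and $[A_i]_I\in G$ for all $i$, and consider in $V$ the set $\mathcal{D}$ of all $I$-positive $B$ such that either $B\setminus\bigcap_{i<\alpha}A_i\in I$, or $B\cap A_i\in I$ for some $i<\alpha$. This set is dense: given any positive $C$, if $C\setminus\bigcap_i A_i=\bigcup_{i<\alpha}(C\setminus A_i)\in I$ we are already in the first case; otherwise $\kappa$-completeness of $I$ together with $\alpha<\kappa$ forces some $C\setminus A_i\notin I$, so $B=C\setminus A_i\leq C$ lands in the second case. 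Since $G$ is generic it meets $\mathcal{D}$. A member $B$ of the second kind would give $[B]_I\wedge[A_i]_I=[B\cap A_i]_I=0$ with both $[B]_I,[A_i]_I\in G$, which is impossible; hence the member is of the first kind and witnesses $[\bigcap_{i<\alpha}A_i]_I\in G$, as desired.

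For (1), precipitousness, I would run the standard ``no infinite descending sequence'' argument. Suppose toward a contradiction that some condition $A$ forces $[\dot f_0]_G>[\dot f_1]_G>\cdots$. Working below $A$, I would build a refining $\omega$-sequence of maximal antichains $\langle\mathcal{A}_n : n<\omega\rangle$ and, for each $B\in\mathcal{A}_n$, a ground-model function $g_B$ with $B\Vdash[\dot f_n]_G=[g_B]_G$; here $\kappa$-saturation keeps each $\mathcal{A}_n$ of size $<\kappa$ and $\kappa$-completeness lets me disjointify the antichains, so that for each $n$ a single $g_n\in V$ is well defined by $g_n(\gamma)=g_B(\gamma)$ for the unique $B\in\mathcal{A}_n$ containing $\gamma$. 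The forced inequalities give that $\{\gamma : g_{n+1}(\gamma)<g_n(\gamma)\}$ is co-null (its complement lies in $I$) for every $n$, and since $I$ is in particular $\sigma$-complete the intersection of these co-null sets is co-null, hence nonempty; any $\gamma$ in it produces an infinite descending sequence of ordinals, a contradiction. This yields well-foundedness, after which $Ult(V,G)$ is identified with its transitive collapse $M$ and with the induced embedding $j$.

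The main obstacle is (2), closure of $M$ under $\kappa$-sequences in $V[G]$, and this is where $\kappa$-saturation is used most essentially. Given $\langle a_\xi : \xi<\kappa\rangle\in V[G]$ with each $a_\xi\in M$, I would first, for each $\xi$, use that every element of $M$ has a representative in $V$ together with the $\kappa$-c.c.\ of $\mathbb{P}$ to fix in $V$ a maximal antichain $\{B^\xi_i : i<\mu_\xi\}$ with $\mu_\xi<\kappa$ and functions $f^\xi_i\in V$ such that $B^\xi_i\Vdash a_\xi=[f^\xi_i]_G$; by $\kappa$-completeness I may assume the $B^\xi_i$ are pairwise disjoint with co-null union, so that for $I$-almost every $\gamma$ there is a unique index $i(\xi,\gamma)$ with $\gamma\in B^\xi_{i(\xi,\gamma)}$. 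Then I would define in $V$ a single function $h$ on $\kappa$ by letting $h(\gamma)$ be the $\kappa$-sequence $\langle f^\xi_{i(\xi,\gamma)}(\gamma) : \xi<\kappa\rangle$. Using $\mathrm{crit}(j)=\kappa$ (so that $j(\xi)=\xi$ for $\xi<\kappa$ and $\kappa\in M$) and the {\L}o\'{s} theorem, I would check that $[h]_G\restriction\kappa\in M$ satisfies $([h]_G\restriction\kappa)(\xi)=a_\xi$ for every $\xi<\kappa$, because exactly one $B^\xi_i$ lies in $G$ and on that set $\gamma\mapsto f^\xi_{i(\xi,\gamma)}(\gamma)$ agrees with the chosen representative $f^\xi_i$. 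Hence $\langle a_\xi : \xi<\kappa\rangle=[h]_G\restriction\kappa\in M$. The delicate points, and the reason saturation is indispensable, are the reduction of each coordinate to a ground-model antichain of size $<\kappa$ and the matching of domains using that $j$ fixes $\kappa$ pointwise; all three items are standard and may be found in \cite{MR2768692}.
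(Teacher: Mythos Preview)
Your argument is correct and follows the standard lines found in the literature. There is nothing to compare against, however: the paper does not prove Fact~\ref{IdealFacts} at all, but simply records it as a list of standard facts with a citation to \cite{MR2768692}. So your proposal supplies a proof where the paper deliberately gives none.

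One small imprecision in your treatment of (1): the set $\{\gamma : g_{n+1}(\gamma)<g_n(\gamma)\}$ is not literally co-null in $\kappa$, only $I$-almost all of $A$ (its complement \emph{within $A$} lies in $I$); since $A\notin I$ and $I$ is $\sigma$-complete, the countable intersection is still $I$-positive and hence nonempty, which is all you need. Likewise in (2) you should make explicit that the entire family $\langle B^\xi_i, f^\xi_i : \xi<\kappa,\ i<\mu_\xi\rangle$ lies in $V$ because it is extracted in $V$ from a single $\mathbb{P}$-name for the sequence; this is implicit in your wording but is the point that makes the definition of $h$ legitimate.
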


\begin{thm}
Suppose $\kappa$ is an inaccessible cardinal that carries a $\kappa$-saturated $\kappa$-complete ideal, then for any $ \delta<\kappa, \  HL(1,\delta,\kappa)$.
\end{thm}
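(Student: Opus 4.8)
The plan is to run the forcing argument à la Harrington/Dobrinen--Hathaway using the generic ultrafilter arising from the saturated ideal $I$ in place of a measure on $\kappa$. First I would set up the forcing $\mathbb{P}$ that adds $\kappa$ many generic branches through the nice tree $T$ under $<\kappa$-support, observing (as in the proof of Theorem \ref{Tail cone homogeneity in full generality}) that this product is $<\kappa$-closed, separative, of size $\kappa$, hence equivalent to $Add(\kappa,\kappa)$ and in particular preserves that $\kappa$ is inaccessible. The key point is that the saturated ideal gives, after forcing with $\mathbb{Q}=P(\kappa)/I$, a $V$-$\kappa$-complete generic ultrafilter $G$ with a well-founded ultrapower $j\colon V\to M$, $M^\kappa\subset M$ in $V[G][H]$ (Fact \ref{IdealFacts}); this is enough $\kappa$-completeness to run the one-dimensional tree argument, even though $G$ is only generic and only $V$-$\kappa$-complete.

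The heart of the argument is a cofinally-often color-stabilization claim proved in a two-step generic extension. Given a coloring $f\colon T\to\delta$, $\delta<\kappa$, work over $V$ with the $\mathbb{P}$-name $\dot c$ for the $0$-th generic branch (or work with all $\kappa$ generic branches simultaneously and fuse). In $V[G_\mathbb{P}]$, force further with $\mathbb{Q}=P(\kappa)/I$ to obtain the generic ultrafilter and the elementary embedding $j$; then for each node $q\in T$ that lies on the generic branch, the set $\{\alpha<\kappa : f(\dot c\restriction\alpha)=\gamma\}$ must, for a unique $\gamma<\delta$, belong to $G$ — this uses that $G$ decides $\delta<\kappa$ many alternatives and is closed under $V$-indexed intersections of length $<\kappa$. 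Pulling this back with names and using the $\kappa$-c.c.\ (or rather $\kappa$-closure of $\mathbb{P}$ and the saturation of $I$) one extracts, back in $V$, a condition and an assignment of a "persistent" color to a cofinal set of nodes, exactly as in Claim \ref{ImportantClaim} of Theorem \ref{WeakCompactness}. Concretely I would prove the analogue of that claim: for every $p\in T$ there are $\gamma<\delta$ and $q\succeq p$ such that every $q'\succeq q$ has $\kappa$-many $\gamma$-colored extensions on cofinally many levels, the "many" being measured by the generic filter. The well-pruned-ness of $T$ forces some color to be hit on every level, which rules out the bad case.

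Once the persistent color $\gamma$ and root node are fixed, the construction of the strong subtree $T'$ is the same straightforward $<\kappa$-closed recursion as in the last paragraph of the proof of Theorem \ref{WeakCompactness}: at successor stages, enumerate the immediate successors $\{t_k:k<\beta\}$ of the current top level, find a common level $\eta$ above the current height on which every $t_k$ has a $\gamma$-colored extension (possible because the relevant sets are in the generic ultrafilter and $G$ is $V$-$\kappa$-complete, and $\beta<\kappa$ since $T$ is a $\kappa$-tree), and pick one such extension for each; at limit stages take upper bounds, using $<\kappa$-closure of $T$. The resulting $T'$ is a strong subtree with a constant color $\gamma$ on all its level-tuples.

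The main obstacle, and the point requiring care, is that $G$ is not an ultrafilter in $V$ — it lives only in the generic extension $V[G_\mathbb{P}][H]$ — so all the combinatorial choices ("find $\eta$ on which every $t_k$ has a $\gamma$-extension", "find the persistent $\gamma$") are a priori statements about the extension, not about $V$. The fix is the standard names-and-maximal-antichain manipulation: because $\mathbb{Q}=P(\kappa)/I$ is $\kappa$-c.c.\ (saturation!) and $\mathbb{P}$ is $<\kappa$-closed, any $\mathbb{Q}$-name for a bounded subset of $\kappa$, or for one of finitely/${<}\kappa$-many colors, is decided by a small antichain, and the relevant facts ("$A_{q,\gamma}\in G$") translate into statements in $V$ of the form "a large Boolean combination of the $A_{q,\gamma}$'s is $I$-positive", which one then manipulates using $\kappa$-completeness of $I$. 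I would isolate this as a lemma: for the sequence $\langle A_{q,\gamma}\subset\kappa : q\in T,\gamma<\delta\rangle$ in $V$, since $|T|=\kappa$ and $I$ is $\kappa$-complete and $\kappa$-saturated, there is in $V$ a single $I$-positive set $S$ and an assignment $q\mapsto\gamma_q$ such that $A_{q,\gamma_q}=^* S$ modulo $I$ along a cofinal branch of decisions — this is where precipitousness and saturation do the real work, replacing the normal measure used in Theorem \ref{DHmainThm}.
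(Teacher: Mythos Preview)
Your core instinct—replace the $\kappa$-complete filter in Theorem~\ref{WeakCompactness} by the generic ultrafilter $G$ coming from $P(\kappa)/I$—is exactly the paper's approach. But two features of your plan do not work as written.

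The branch-adding forcing $\mathbb{P}$ is unnecessary for $d=1$ and actively harmful here. The sets $A_{q,\gamma}=\{\alpha<\kappa:\exists q'\in T(\alpha)[q]\ f(q')=\gamma\}$ already lie in $V$; no generic branches are needed to define them, and the paper forces only with $P(\kappa)/I$. If you do force $\mathbb{P}$ first and then $P^V(\kappa)/I$, the resulting $G$ is merely a $V$-ultrafilter and cannot measure the $V[G_\mathbb{P}]$-sets $\{\alpha:f(\dot c\restriction\alpha)=\gamma\}$ that your argument relies on; nor is it clear that the ideal generated by $I$ in $V[G_\mathbb{P}]$ retains the properties you need.

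The real gap is the return to $V$. Your length-$\kappa$ recursion building $T'$ invokes at every stage the statement ``$\bigcap_{k}A_{t_k,\gamma}\in G$'' to locate the next level; but $G$ exists only in the extension, $P(\kappa)/I$ is only $\kappa$-c.c., and there is no reason the resulting $\kappa$-sequence of choices lies in $V$. The lemma you propose (a single $I$-positive $S$ with $A_{q,\gamma_q}=^*S$ along some ``branch of decisions'') is not something saturation or precipitousness delivers. The paper sidesteps the whole issue by \emph{not} building $T'$ directly: after proving the analogue of Claim~\ref{ImportantClaim} in $V[G]$ (using that $P(\kappa)/I$ adds no new ${<}\kappa$-sequences, so the length-$\delta$ chain lies in $V$), it fixes the resulting $q\in T$ and $\gamma<\delta$ and verifies $DSHL(1,\delta,\kappa)$ instead. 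For each $\xi$, the $V$-set $\bigcap\{A_{q_i,\gamma}:q_i\in T(\xi)[q]\}\setminus(\xi+1)$ belongs to $G$, hence is nonempty, so a level $\beta$ and a $q$-$\xi$-dense monochromatic $D\subset T(\beta)$ exist in $V$. This is a local, absolute statement requiring no $\kappa$-length construction, and Theorem~\ref{EquiForm} (whose implication $DSHL\Rightarrow HL$ needs only inaccessibility) then yields $HL(1,\delta,\kappa)$.
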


\begin{proof}
Let $T$, a nice tree and $f: T\to \delta$ be given.
Without loss of generality, we might assume $T$ is a subtree of $\kappa^{<\kappa}$.
For each $q\in T, \gamma\in \delta$, define $A_{q,\gamma}$ as in Theorem \ref{WeakCompactness}.
Let $G$ be $\mathbb{P}=P(\kappa)/I$-generic over $V$. In $V[G]$, let $j: V\to M\simeq Ult(V,G)$ be the generic ultrapower embedding.

\begin{claim}\label{NoNewBoundedSubset}
$(\kappa^{<\kappa})^M=(\kappa^{<\kappa})^{V[G]}=(\kappa^{<\kappa})^V$ and $\kappa$ remains strongly inaccessible in both $V[G]$ and $M$.
\end{claim}
\begin{proof}[Proof of the claim]
The first equality follows from Fact \ref{IdealFacts}. To show the second equality, first note that $\kappa$ remains regular in $V[G]$ hence in $M$ since forcing with $\mathbb{P}=P(\kappa)/I$ is $\kappa$-c.c. Given any $x\in \kappa^{<\kappa}$ in $V[G]$, by closure we know that $x\in M$. By the regularity of $\kappa$ in $M$, we can find sufficiently large $\theta \in \kappa $ and $\alpha<\kappa$ such that $x\in (\theta^{\alpha})^M$. However since $j$ is elementary we have $j((\theta^\alpha)^V)= (\theta^\alpha)^M=(\theta^\alpha)^V$ since $(\theta^\alpha)^V\in H(\kappa)^V$ and $j\restriction (H(\kappa))^V = id \restriction (H(\kappa))^V$. Hence we conclude no bounded subset of $\kappa$ is added after forcing with $\mathbb{P}$ and then $\kappa$ remains strongly inaccessible in $V[G]$.
\end{proof}

\begin{claim}
In $V[G]$,
 $(\forall p\in T) (\exists q \succeq p) (\exists \gamma\in \delta) (\forall q' \succeq q) \   A_{q',\gamma}\in G$.
\end{claim} 
 
 \begin{proof}[Proof of the claim]
 Note $T=j(T)\cap \kappa^{<\kappa}\in M$.
  Suppose the claim is not true, in $V[G]$, $(\exists p\in T) (\forall \gamma\in \delta) (\forall q\succeq p) (\exists q'\succeq q) \ A_{q',\gamma}^c\in G$. In $V[G]$ we build an increasing chain of nodes in $T$, $\langle p_\gamma: \gamma<\delta\rangle$ such that $p_0=p$ and $\forall \gamma\in \delta$, $A_{p_\gamma,\gamma}^c\in G$. As by Claim \ref{NoNewBoundedSubset} $\langle p_\gamma :\gamma<\delta\rangle \in V$, we can pick $q^*\in T$ that is an upper bound for $\langle p_\gamma: \gamma <\delta\rangle$ in $T$. 
However, we know $\bigcap_{\gamma<\delta} A_{q^*, \gamma}^c \in G$ by the construction and the fact that $G$ is $V$-$\kappa$-complete, $\langle p_\gamma: \gamma<\delta\rangle \in V$. Work in V. We know that $\bigcap_{\gamma<\delta} A_{q^*, \gamma}^c$ is not bounded in $\kappa$.
Let $\beta\in \bigcap_{\gamma<\delta} A_{q^*, \gamma}^c$ and $\beta>ht(q^*)$. Then there is no $q''\in T(\beta)[q^*]$ that gets color $\gamma$ for any $\gamma<\delta$.   
 This is clearly a contradiction.
 \end{proof}

Now in $V[G]$, pick the witness $q\in T, \gamma\in \delta$ for the $root(T)$. We claim that in $V$ for any $\xi > ht(q)$, there exists $\beta>\xi$ and $D\subset T(\beta)$ such that all elements in $D$ get color $\gamma$ and $D$ is $\xi$-$q$-dense. That is to say, $DSHL(1,\delta,\kappa)$ is true, which then implies $HL(1,\delta,\kappa)$ by Theorem \ref{EquiForm}. Let $\langle  q_i: i<\alpha\rangle$ enumerate $T(\xi)[q]$. We know in $V[G]$ that $\bigcap_{i<\alpha} A_{q_i,\gamma}\backslash \xi+1\in G$. In particular, it can't have empty intersection in $V$. Pick any $\beta\in \bigcap_{i<\alpha} A_{q_i, \gamma}\backslash \xi+1$, then by definition for each $i<\alpha$, there exists $q_i'\in T(\beta)[q_i]$ such that $f(q_i')=\gamma$. Thus $D=\{q_i'\in T(\beta): i<\alpha\}$ works.

\end{proof}

In fact an almost identical proof shows the asymmetric version, for any $\delta<\kappa$, $HL^{asym}(1,\delta ,\kappa)$, is true. Combine with the following theorem by Kunen: 
\begin{thm}[Kunen \cite{MR495118}]
It is consistent relative to the existence of a measurable cardinal that there exists a strongly inaccessible cardinal $\kappa$ that is not weakly compact but carries a $\kappa$-saturated $\kappa$-complete ideal.
\end{thm}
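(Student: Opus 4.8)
This is a theorem of Kunen; I describe the shape of the standard argument and flag the delicate points. The plan is to begin from a model $V$ with a measurable cardinal $\kappa$ --- after a preliminary reverse-Easton forcing we may assume GCH --- and to fix there an embedding $j\colon V\to M$ with $\mathrm{crit}(j)=\kappa$, $j(\kappa)>\kappa$, ${}^{\kappa}M\subseteq M$, so that GCH gives $|j(\kappa)|=\kappa^{+}$ and $M\models\mathrm{GCH}$. One then forces with a $<\!\kappa$-closed poset $\mathbb{Q}$ chosen so that: (i) $\kappa$ remains strongly inaccessible in $V[G]$, which is automatic since $<\!\kappa$-closure adds no bounded subsets of $\kappa$; (ii) $\kappa$ is no longer weakly compact in $V[G]$, because $\mathbb{Q}$ adds a combinatorial object no weakly compact cardinal can carry --- the natural candidate being a $\kappa$-Kurepa tree $T$, since weakly compact cardinals have no Kurepa trees; and (iii), crucially, $\mathbb{Q}$ is arranged so that the ground-model embedding $j$ lifts, possibly only in a further generic extension, to a generic embedding witnessing a $\kappa$-complete, $\kappa$-saturated ideal $I$ on $\kappa$ in $V[G]$, so that Fact \ref{IdealFacts} applies to $I$.

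For (iii) with $\mathbb{Q}$ the Kurepa-tree forcing --- conditions $(t,b)$ with $t$ a normal tree of successor height $<\kappa$ whose levels have size below the target cardinal, and $b$ a bounded partial assignment of the $\kappa^{+}$ branch-indices, again of size below the target --- in $M$ the poset $j(\mathbb{Q})$ is the same forcing at $j(\kappa)$, and a \emph{master condition} $p^{\ast}\in j(\mathbb{Q})$ below $j''G$ is available precisely because the generic Kurepa tree $T$ does have cofinal branches (its $\kappa^{+}$ designated ones): take $p^{\ast}$ to be $T$ together with one extra level at $\kappa$ carrying a new node above each designated branch --- a level of size $\kappa^{+}<j(\kappa)$, hence a legitimate condition below every $j(q)=q$ for $q\in G$. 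This is exactly the step that fails for a ``threadless'' obstruction such as a non-reflecting stationary set, a $\square(\kappa)$-sequence, or an Aronszajn tree, where a master condition would have to manufacture the very thread whose absence is the witness. Using the $<\!j(\kappa)$-closure of $j(\mathbb{Q})$, the closure ${}^{\kappa}M\subseteq M$, and GCH to control how many dense subsets of $M[G]$ must be met, one builds a generic $H\ni p^{\ast}$ over $M[G]$, lifts $j$, and reads $I$ off the generic embedding in the usual way; $I$ is then non-trivial, $\kappa$-complete (from the $\kappa$-completeness of the lift and the closure of the quotient), and $\kappa$-saturated from the standard analysis of the quotient forcing.

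I expect the genuinely hard part --- and where Kunen's construction earns its keep --- to be reconciling (ii) and (iii) while obtaining the \emph{strong} form of saturation the conclusion demands: $P(\kappa)/I$ must have the $\kappa$-c.c.\ in the sense above (all antichains of size $<\kappa$), which is far stronger than the saturation a naive master-condition lift tends to deliver, and makes $I$ behave almost like a measure even though $\kappa$ is not weakly compact. Pinning down the exact poset $\mathbb{Q}$ (and the auxiliary $\kappa$-saturated forcing, if one is needed to locate the master condition) and verifying that the resulting quotient really is $<\!\kappa$-c.c.\ is the substantive content, and is due to Kunen.
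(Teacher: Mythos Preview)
The paper does not prove this theorem; it is quoted as a black box from Kunen. Nonetheless, your sketch contains a genuine error and misidentifies Kunen's construction. Your claim that weakly compact cardinals carry no $\kappa$-Kurepa trees is false --- it is \emph{ineffability} that rules out (slim) Kurepa trees, and it is consistent (for instance, force a Kurepa tree over a Laver-indestructible supercompact) that a measurable, hence weakly compact, cardinal carries a $\kappa$-Kurepa tree. So your step (ii) fails: adding a Kurepa tree need not destroy weak compactness. Moreover, even granting (ii), the tail of the Kurepa iteration in $M$ is nowhere near $\kappa$-c.c., so the ideal read off the lift would not be $\kappa$-saturated; you acknowledge this gap, but your candidate forcing offers no route to closing it.

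Kunen's actual construction is precisely the one you dismiss. He adds a $\kappa$-\emph{Souslin} tree $S$ --- which, being $\kappa$-Aronszajn, kills the tree property and hence weak compactness --- arranging matters so that (add $S$) $\ast$ (force with $S$) is equivalent to $Add(\kappa,1)$, through which the embedding $j$ does lift over a suitably prepared ground model. Your objection, that a master condition over an Aronszajn-adding step would have to manufacture a nonexistent branch, is circumvented by this absorption: one lifts $j$ through the composite, not over the Souslin-adding step alone. In the intermediate model $V[S]$, the tree $S$ itself is a $\kappa$-c.c.\ forcing whose generic resurrects measurability, so the induced ideal $I$ in $V[S]$ has $P(\kappa)/I$ embedding into the Boolean completion of $S$; $\kappa$-saturation thus follows directly from $S$ being Souslin. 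The paper alludes to exactly this absorption technique in the proof of Theorem~\ref{Final}.
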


The following corollary is then immediate.

\begin{cor}
It is consistent relative to the existence of a measurable cardinal that there exists an inaccessible cardinal not weakly compact and for any $\delta<\kappa \ HL(1,\delta,\kappa)$ holds.
\end{cor}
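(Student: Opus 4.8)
The plan is to combine the two results immediately preceding the corollary, with no further construction required. First I would start from a model containing a measurable cardinal and apply Kunen's theorem to pass to a model $V$ in which there is a strongly inaccessible cardinal $\kappa$ that is not weakly compact but carries a $\kappa$-complete $\kappa$-saturated ideal $I$ on $\kappa$. This $V$ will be the witnessing model for the consistency statement; in particular $\kappa$ is inaccessible and not weakly compact in $V$.

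Next I would observe that, in $V$, the hypotheses of the theorem proved just above (the one deriving $HL(1,\delta,\kappa)$ from the existence of a $\kappa$-complete $\kappa$-saturated ideal on an inaccessible $\kappa$) are met, with the fixed ideal $I$ playing the required role. Applying that theorem yields, for each $\delta<\kappa$, that $HL(1,\delta,\kappa)$ holds in $V$. Since $I$ is a single fixed object and the generic ultrapower argument runs uniformly — the key points being that $P(\kappa)/I$ is $\kappa$-c.c., adds no bounded subsets of $\kappa$ (so $\kappa$ stays inaccessible in $V[G]$ and in $M$), and that $G$ is $V$-$\kappa$-complete — the conclusion applies simultaneously to all $\delta<\kappa$. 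Combining the two gives exactly the assertion: relative to a measurable cardinal it is consistent that $\kappa$ is inaccessible, not weakly compact, and $HL(1,\delta,\kappa)$ holds for every $\delta<\kappa$.

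I do not expect a genuine obstacle here, as all the substantive work is already contained in the preceding theorem and in Kunen's theorem; the only point worth being careful about is that \emph{no} additional forcing is performed over $V$ after Kunen's construction, so that $\kappa$ genuinely remains non-weakly-compact in the final model while the ideal-based argument is carried out internally in $V$.
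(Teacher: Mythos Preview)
Your proposal is correct and matches the paper's approach exactly: the paper states the corollary as immediate from combining Kunen's theorem with the preceding theorem that a $\kappa$-saturated $\kappa$-complete ideal on inaccessible $\kappa$ yields $HL(1,\delta,\kappa)$ for all $\delta<\kappa$. Your additional remarks about uniformity in $\delta$ and about not forcing further over $V$ are accurate but already implicit in the paper's setup.
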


We can obtain similar conclusions in higher dimension at the cost of stronger hypothesis.

\begin{thm}\label{Final}
Suppose $\kappa$ is inaccessible and GCH holds above $\kappa$. Moreover, assume $\kappa$ is measurable in the forcing extensions by $Add(\kappa, \kappa^{+d})$ for all $d\in \omega$.
Then there exists a forcing extension in which $\kappa$ is inaccessible but not weakly compact and for all $d\in \omega, \delta<\kappa, \ HL(d,\delta,\kappa)$ holds.
\end{thm}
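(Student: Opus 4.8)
The plan is to establish $HL(d,\delta,\kappa)$ for all $d\in\omega$ and all $\delta<\kappa$ already in $V$, then force with a small, highly closed poset that destroys the weak compactness of $\kappa$ but keeps it strongly inaccessible, and finally argue that $HL(d,\delta,\kappa)$ survives this forcing. For the first part, by GCH one has $\beth_{d-1}(\kappa)=\kappa^{+(d-1)}$, so the Erd\H{o}s--Rado theorem gives $\kappa^{+d}\to(\kappa)^d_\kappa$; combined with the hypothesis that $\kappa$ is measurable in the $Add(\kappa,\kappa^{+d})$ extension, Theorem \ref{DHmainThm} applied with $\lambda=\kappa^{+d}$ yields $DSHL^{asym}(d,\delta,\kappa)$, hence $HL(d,\delta,\kappa)$, in $V$ for every $\delta<\kappa$. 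Since $\kappa$ is weakly compact in $V$, Theorem \ref{EquiForm} upgrades this to $SDHL(d,\delta,\kappa)$ in $V$, which is the form I will transfer.

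Next I would force with $\mathbb{S}$, the standard poset adding a non-reflecting stationary subset of $\{\alpha<\kappa:\mathrm{cf}(\alpha)=\omega\}$ by bounded approximations ordered by end-extension. This $\mathbb{S}$ is $<\kappa$-closed, has the $\kappa^+$-chain condition, and has size $\kappa$; consequently it adds no bounded subsets of $\kappa$, preserves cardinals, cofinalities and GCH above $\kappa$, so $\kappa$ remains strongly inaccessible in $V^{\mathbb{S}}$, while $V^{\mathbb{S}}$ contains a non-reflecting stationary subset of $\kappa$, so $\kappa$ is no longer weakly compact there. (Any $<\kappa$-closed — or merely $<\kappa$-strategically closed — poset of size $\le\kappa$ destroying weak compactness would serve equally well; the only features used below are its closure and the resulting fact that it introduces no new objects of hereditary size $<\kappa$.)

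The crux is that $\mathbb{S}$ preserves $HL(d,\delta,\kappa)$. Since $\kappa$ stays strongly inaccessible, by Theorem \ref{EquiForm} it suffices to verify $SDHL(d,\delta,\kappa)$ in $V^{\mathbb{S}}$, and the point is that an $SDHL$-witness — a level sequence $\bar t$ together with a $\bar t$-$(ht(\bar t)+1)$-dense level matrix $\bar D$ on which $f$ is constant — is an object of size $<\kappa$ whose relevant content lies on levels $<\kappa$. So suppose, toward a contradiction, that some $p\in\mathbb{S}$ forces that $\dot{\bar T}$ (a $d$-tuple of nice trees) and $\dot f:\Pi_{j<d}\dot T_j\to\delta$ form a counterexample to $SDHL(d,\delta,\kappa)$. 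Working in $V$ and using the $<\kappa$-closure of $\mathbb{S}$, build a $\le_{\mathbb{S}}$-descending sequence $\langle p_\alpha:\alpha<\kappa\rangle$ below $p$ and a continuous increasing cofinal sequence $\langle a_\alpha:\alpha<\kappa\rangle$ in $\kappa$ such that $p_\alpha$ decides $\dot{\bar T}\restriction a_\alpha$ and $\dot f\restriction(\Pi_{j<d}\dot T_j\restriction a_\alpha)$ — possible because each of these names an object of size $<\kappa$, hence is forced into $V$ and decided on a dense set, and the construction passes limits by closure — while choosing $a_{\alpha+1}$ large enough that a splitting above every already-decided node of $\dot{\bar T}$ occurs below it (legitimate since $p_\alpha$ forces $\dot{\bar T}$ perfect and only $<\kappa$ many nodes are in play). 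Because every $p_\alpha$ forces $\dot{\bar T}$ to be a tuple of nice trees and niceness is level-by-level, the decided initial segments cohere into a $d$-tuple of nice trees $\bar T^{*}\in V$ of height $\kappa$ with a coloring $f^{*}:\Pi_{j<d}T^{*}_j\to\delta$ in $V$. Applying $SDHL(d,\delta,\kappa)$ in $V$ to $(\bar T^{*},f^{*})$ produces $\bar t$ and $\bar D$ on some level $\eta<\kappa$ witnessing it; choosing $\alpha$ with $a_\alpha>\eta$, the condition $p_\alpha$ then forces $\bar D$ to be a $\bar t$-$(ht(\bar t)+1)$-dense level matrix for $\Pi_{j<d}\dot T_j$ with $|\dot{f}''\bar D|=1$, since everything relevant occurs on levels $<a_\alpha$, where $\dot{\bar T}$ and $\dot f$ are forced to equal $\bar T^{*}$ and $f^{*}$. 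This contradicts $p_\alpha\le p$. Hence $V^{\mathbb{S}}\models SDHL(d,\delta,\kappa)$, so $V^{\mathbb{S}}\models HL(d,\delta,\kappa)$ for all $d\in\omega$ and $\delta<\kappa$, and $V^{\mathbb{S}}$ is the desired model.

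The main obstacle is this preservation step. Two points need care. First, one must transfer the \emph{local} form $SDHL$ rather than $HL$ directly: $HL$ demands a single pair of strong subtrees valid at all levels simultaneously, which cannot be read off from bounded initial segments, whereas an $SDHL$-witness is small and becomes available as soon as the trees and coloring are decided below level $\kappa$. Second, one must check that the decided initial segments of $\dot{\bar T}$ genuinely assemble into a \emph{nice} tree in $V$ — rootedness, the $\kappa$-tree property, well-prunedness and absence of premature deaths are immediate from level-locality, while perfectness must be secured through the bookkeeping on the $a_\alpha$. A lesser point is to confirm that the weak-compactness-destroying poset $\mathbb{S}$ really is $<\kappa$-closed (or at least $<\kappa$-strategically closed) and of size $\le\kappa$, so that both the "small name decided on a dense set" mechanism and the preservation of GCH above $\kappa$ go through.
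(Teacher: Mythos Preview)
Your approach is correct but takes a different route from the paper's. The paper uses Kunen's absorption: factoring $Add(\kappa,1)\simeq P*\dot Q$ where $P$ adds a $\kappa$-Suslin tree and $\dot Q$ forces with it, the target model is $V[g]=V^P$; to verify $HL$ there one passes \emph{upward} to $V[g][h]=V^{Add(\kappa,1)}$, observes that the hypothesis of Theorem~\ref{DHmainThm} is again available over that model, applies it there to the given trees and coloring (which persist since $Q$ adds no bounded subsets of $\kappa$), and pulls the small $DSHL$-witnesses back down using that $Q$ adds no new $<\kappa$-sequences of ordinals. You instead invoke Theorem~\ref{DHmainThm} already in $V$ to get $SDHL$ there, force with a weak-compactness-destroying poset $\mathbb S$, and push the $SDHL$-witnesses \emph{forward} via a level-by-level reconstruction of the generic trees inside $V$. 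Your argument is more hands-on but more general---it really shows that $SDHL(d,\delta,\kappa)$ is preserved by any sufficiently closed forcing---while the paper's is slicker, needing no reconstruction of trees at all. Two minor corrections: the non-reflecting-stationary-set forcing is not $<\kappa$-closed (a descending $\omega_1$-sequence of conditions can have union equal to the set of all countable-cofinality ordinals below $\omega_1$, which is stationary there), only $\kappa$-strategically closed, and it is the existence of a single strategy surviving all $\kappa$ rounds---not merely $<\kappa$-strategic closure---that is needed to build your length-$\kappa$ descending chain of conditions; and your appeal to weak compactness of $\kappa$ in $V$ is superfluous, since by the last sentence of Theorem~\ref{EquiForm} the equivalence $DSHL\Leftrightarrow SDHL$ already holds at a strongly inaccessible cardinal.
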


\begin{proof}
By Kunen's absorption technique (see Kunen \cite{MR495118}), $Add(\kappa,1)$ is forcing equivalent with $P*\dot{Q}$ where $P$ adds a $\kappa$-Suslin tree $S$ and $\Vdash_{P}\dot{Q}$ adds a branch to $S$. Let $g$ be generic for $P$ over $V$. We claim that $V[g]$ is the desired model.

First note that $\kappa$ is inaccessible but not weakly compact in $V[g]$ as there exists a $\kappa$-Suslin tree in $V[g]$ and the forcing adds no bounded subsets of $\kappa$. Given $d\in \omega, \delta<\kappa,\langle T_j: j<d\rangle , f: \Pi_{j<d}T_j\to \delta$ as in $HL(d,\delta, \kappa)$, let $h$ be $V[g]$-generic for $(\dot{Q})_g=Q$. By assumption we know in $V[g][h]$ $\kappa$ is measurable and it remains so after further adding any $\lambda\in \{\kappa^{+d}: d\in \omega\}$ many Cohen subsets of $\kappa$. 
Furthermore, as over $V[g]$ $Q$ does not add any bounded subsets of $\kappa$, $T_j$ remains a nice tree in $V[g][h]$ for $j<d$. Hence by Theorem \ref{DHmainThm}, in $V[g][h]$, there exist $\zeta<\kappa, \bar{t}=\langle t_j: j<d \rangle\in \Pi_{j<d}T_j(\zeta)$ such that for all $\zeta'<\kappa$, there is $\zeta'' \geq \zeta', \zeta''<\kappa$ and $\langle X_j\subset T_j(\zeta''): j<d\rangle$ such that for each $j<d$, $X_j$ dominates $T_j(\zeta')[t_j]$ and $|f'' \Pi_{j<d} X_j|=1$. However, since forcing with $Q$ over $V[g]$ does not add any new $<\kappa$-sequence of ordinals, these witnesses can be found in $V[g]$. Therefore, the same statement holds in $V[g]$.

\end{proof}

\begin{remark}
The hypothesis in Theorem \ref{Final} can be forced from \linebreak
$V\models $ GCH + there exists a cardinal $\kappa$ that is $(\kappa+\omega)$-strong.
\end{remark}

\section*{Acknowledgment}
I want to thank James Cummings for illuminating discussions of the problem. I am especially grateful for his time spent on reading and correcting previous drafts. I also want to thank the anonymous referees for carefully reading the drafts and suggesting lots of improvements, including pointing out countless grammatical mistakes and spots of poor readability.

\appendix

\section{A proof of Lemma \ref{Cleanup}} \label{AddedProof}

First we show when $d=1$ we can use $\lambda=(2^\kappa)^+$ to do the construction. To see this, note we can find $W(\alpha)\in [\lambda]^{\leq \kappa}$ for each $\alpha\in \lambda$  fulfilling the first requirement  by $\kappa^+$-c.c-ness of $\mathbb{P}$. Applying the $\Delta$-system lemma, we can find $Z\in [\lambda]^\lambda$ such that $\{W(\gamma): \gamma\in Z\}$ forms a $\Delta$-system with root $R\in [\lambda]^{\leq \kappa}$ and $\forall \alpha\in Z\ W(\alpha)\cap \sup R = R$. Define $W(\emptyset)=R$.

Define an equivalence relation $\sim$ on $Z$ such that $\alpha \sim \beta$ iff 
\begin{itemize}
\item $type(W(\alpha))=type(W(\beta))$;
\item $type(\alpha\cap W(\alpha))=type(\beta\cap W(\beta))$;
\item For any $p\in \mathbb{P}\restriction W(\alpha)$, $i<\kappa$ and $\gamma<\delta_i$, we have 
\begin{equation*}
p\Vdash \dot{\tau}_i(\alpha)=\gamma \Leftrightarrow h_{W(\alpha),W(\beta)}(p) \Vdash \dot{\tau}_i(\beta)=\gamma.
\end{equation*}
\end{itemize}

 The number of equivalence classes is at most $\leq 2^\kappa$. Hence by the hypothesis there exists a $E\in [Z]^\kappa$ consisting of $\sim$-equivalent elements (in fact we can even find such a set of size $\lambda$). It then follows that $E$ and $W$ are as desired.

More generally, fix $d\in \omega$ and suppose $\lambda\to (\kappa)_{2^\kappa}^{2d}$. If $d=1$, argue as above. So consider $d\geq 2$. First we list some facts proved in \cite{MR2520110}.

\begin{fact}\label{preprocessed}
There exist $E'\in [\lambda]^\kappa$ and $\{W'(u)\in [\lambda]^{\leq \kappa}: u\in [E']^{\leq d} \}$ satisfying the following: 
\begin{enumerate}
\item \label{facta1} $u\subset W'(u)$ for all $u\in [E']^{\leq d}$;
\item \label{facta2} for any $u,v\in [E']^{\leq d}$, if $u\subset v$, then $W'(u)\subset W'(v)$;
\item \label{facta3} for each $i<\kappa$ and each $u\in [E']^d$, $\mathbb{P}\restriction W'(u)$ contains a maximal antichain deciding $\dot{\tau}_i(u)$;
\item \label{facta4} for any $u,v\in [E']^d$, $type(W'(u))=type(W'(v))$, $h_{W'(u), W'(v)}(u)=v$ and for any $i<\kappa$, $j<\delta_i$, $p\in \mathbb{P}\restriction W'(u)$, $p\Vdash \dot{\tau}_i(u) = j $ iff $h_{W'(u), W'(v)}(p)\Vdash \dot{\tau}_i(v)=j$.
\item \label{facta5} for any $k\in \omega, \{u_i: i<k\}, \{w_i: i<k\} \subset [E']^d$, if $(\bigcup_{i<k} u_i, u_0, \cdots, u_{k-1})\simeq (\bigcup_{i<k} w_i, w_0, \cdots, w_{k-1})$, then 
\begin{equation*}
(\bigcup_{i<k} W'(u_i), W'(u_0),\cdots, W'(u_{k-1}))\simeq (\bigcup_{i<k} W'(w_i), W'(w_0),\cdots, W'(w_{k-1}))
\end{equation*} 

\end{enumerate}
\end{fact}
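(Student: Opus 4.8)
The plan is to prove Fact~\ref{preprocessed} by the standard preprocessing argument underlying Shelah's cleanup lemma: first fix, in a monotone fashion, supports of antichains deciding the $\dot\tau_i$'s, and then homogenize the isomorphism type of the resulting forcing configuration with a single application of $\lambda\to(\kappa)^{2d}_{2^\kappa}$. For the monotone raw assignment, given $u\in[\lambda]^{\le d}$, use that $\mathbb{P}=Add(\kappa,\lambda)$ is $\kappa^+$-c.c.\ (which holds since $\kappa^{<\kappa}=\kappa$, $\kappa$ being inaccessible) to pick for each $i<\kappa$ a maximal antichain $A^u_i$ deciding $\dot\tau_i(u)$; each $A^u_i$ has size $\le\kappa$, so $S(u):=u\cup\bigcup_{i<\kappa}\bigcup_{p\in A^u_i}\mathrm{support}(p)$ lies in $[\lambda]^{\le\kappa}$. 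Put $W_1(u):=\bigcup\{S(v):v\subseteq u\}$; then $u\subseteq W_1(u)\in[\lambda]^{\le\kappa}$, $\mathbb{P}\restriction W_1(u)$ contains a maximal antichain deciding each $\dot\tau_i(u)$, and $u\subseteq v\Rightarrow W_1(u)\subseteq W_1(v)$, which already yields the analogues of (1)--(3) before homogenizing.

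Next, for $u\in[\lambda]^{\le d}$ let $\pi_u$ be the transitive collapse of $W_1(u)$ onto $\mathrm{otp}(W_1(u))<\kappa^+$, and for $u,v\in[\lambda]^{\le d}$ let $\rho_{u,v}$ be the collapse of $W_1(u)\cup W_1(v)$. Define a coloring $c$ on $[\lambda]^{2d}$ so that $c(s)$ records, uniformly over all $d$-subsets $u,v$ of $s$, the data $\mathrm{otp}(W_1(u))$, $\pi_u[u]$, the family $\langle\pi_u[\{p\in\mathbb{P}\restriction W_1(u):p\Vdash\dot\tau_i(u)=\gamma\}]:i<\kappa,\ \gamma<\delta_i\rangle$, and the quadruple $(\rho_{u,v}[W_1(u)],\rho_{u,v}[W_1(v)],\rho_{u,v}[u],\rho_{u,v}[v])$. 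Since each $W_1(u)$ and each $\mathbb{P}\restriction W_1(u)$ has size $\le\kappa$ and there are only finitely many pairs $u,v$, the range of $c$ has size at most $(2^\kappa)^\kappa=2^\kappa$. Applying $\lambda\to(\kappa)^{2d}_{2^\kappa}$ yields $E'\in[\lambda]^\kappa$ that is $c$-homogeneous; set $W'(u):=W_1(u)$ for $u\in[E']^{\le d}$.

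Properties (1)--(3) are inherited from the construction of $W_1$. For (4), given $u,v\in[E']^d$, $c$-homogeneity forces $\mathrm{otp}(W'(u))=\mathrm{otp}(W'(v))$ and $\pi_u[u]=\pi_v[v]$, so $h_{W'(u),W'(v)}=\pi_v^{-1}\circ\pi_u$ is the order isomorphism and carries $u$ to $v$; the recorded families $\langle\pi_\bullet[\{p:p\Vdash\dot\tau_i(\bullet)=\gamma\}]\rangle$ agreeing is precisely the biconditional $p\Vdash\dot\tau_i(u)=\gamma\iff h_{W'(u),W'(v)}(p)\Vdash\dot\tau_i(v)=\gamma$. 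For (5) with $k=2$, the recorded quadruple, being constant on $c$-homogeneous $2d$-element subsets of $E'$ containing $u_0\cup u_1$ (pad $u_0\cup u_1$ arbitrarily inside $E'$ to size $2d$), is an isomorphism invariant for $(W'(u_0)\cup W'(u_1),W'(u_0),W'(u_1))$ compatible with the order type of $u_0\cup u_1$. For general $k>2$ one argues by induction: $W'(u_{k-1})$ meets $\bigcup_{i<k-1}W'(u_i)$ only in the controlled pairwise pieces $W'(u_{k-1})\cap W'(u_i)$, each living inside $W'(u_{k-1})$ and governed by the $k=2$ case together with monotonicity, so the isomorphism for $(\bigcup_{i<k-1}W'(u_i),W'(u_0),\dots,W'(u_{k-2}))$ amalgamates with the isomorphism of the $W'(u_{k-1})$-data to give the full one.

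The main obstacle I anticipate is exactly this passage from $k=2$ to arbitrary $k$ in property (5): because $\bigcup_{i<k}u_i$ has unboundedly many elements as $k\to\omega$, no fixed-exponent partition relation homogenizes it directly, so one must check that the bounded-size information recorded by $c$ — the pairwise collapses $\rho_{u,v}$ and the monotonicity inclusions $W'(u_i\cap u_j)\subseteq W'(u_i)\cap W'(u_j)$ — genuinely determines the isomorphism type of the entire configuration $(\bigcup_{i<k}W'(u_i),W'(u_0),\dots,W'(u_{k-1}))$. Once Fact~\ref{preprocessed} is available, \ref{CL1} and \ref{CL2} of Lemma~\ref{Cleanup} are immediate from (3) and (4), while \ref{CL3} and \ref{CL4} are obtained by passing from $W'$ to a suitable shrinking in which distinct values overlap only along values of smaller sets, coherence being verified via (5); the $d=1$ route to $\lambda=(2^\kappa)^+$ recorded earlier is the degenerate case where each $2d$-coordinate carries at most one $d$-subset, so the $2d$-ary coloring collapses to a $\Delta$-system argument followed by a $1$-ary coloring.
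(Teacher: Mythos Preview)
Your proposal is correct and follows essentially the standard Shelah/D\v{z}amonja--Larson--Mitchell preprocessing argument that the paper cites rather than proves (Fact~\ref{preprocessed} is quoted from \cite{MR2520110}): build monotone supports via the $\kappa^+$-c.c., then homogenize a $2d$-ary coloring recording collapsed order types, positions of $u$, forcing data, and pairwise collapses. The obstacle you flag for property~(5) --- passing from $k=2$ to arbitrary $k$ --- dissolves once you note that the unique order isomorphism $\phi_i:W'(u_i)\to W'(w_i)$ is the restriction of \emph{every} pairwise isomorphism $\psi_{ij}:W'(u_i)\cup W'(u_j)\to W'(w_i)\cup W'(w_j)$, so $\bigcup_i\phi_i$ is automatically well defined on overlaps, and for any $x\in W'(u_i)$, $y\in W'(u_j)$ the comparison $\phi_i(x)$ vs.\ $\phi_j(y)$ is settled by $\psi_{ij}$; thus the pairwise data alone determines the full $k$-ary configuration and no genuine induction is needed.
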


Define for each $u\in [E']^{\leq d}$, $W(u)=\bigcup
 \{\bigcap_{v\in X} W'(v): X\subset [E']^{\leq d} \  \& \ \bigcap X \subset u\}$. Let $E=\{\gamma_{\omega \nu}: \nu<\kappa\}$ where $\{\gamma_\nu: \nu<\kappa\}$ is the increasing enumeration of $E'$. We will collect more facts about $W$ and $E$ from \cite{MR2520110}.
 
\begin{fact}\label{typel}
 \begin{enumerate}
 \item \label{factb1} For all $u, v\in [E]^{\leq d} \ W(u)\cap W(v)=W(u\cap v)$;
 \item \label{factb2} We can without loss of generality take $X$ in the definition of $W(u)$ to be of cardinality at most $d+1$;
 \item \label{factb3} For any $X=\{u_i: i<k\}\subset [E']^{\leq d}$ and $Y=\{v_i: i<k\}\subset [E']^{\leq d}$ and $u\in [E]^{\leq d}$, if $(\bigcup X, u, u_0, \cdots, u_{k-1})\simeq (\bigcup Y, u, v_0,\cdots, v_{k-1})$, then $\bigcap_{v\in X} W'(v) = \bigcap_{v\in Y} W'(v)$. We say $X,Y$ have the same \emph{isomorphism type with respect to $u$} if $\bigcap X \subset u$, $\bigcap Y\subset u$ and there exist some enumerations of $X$ and $Y$ satisfying $(\bigcup X, u, u_0, \cdots, u_{k-1})\simeq (\bigcup Y, u, v_0,\cdots, v_{k-1})$.
 \end{enumerate}
\end{fact}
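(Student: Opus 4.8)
The three clauses have very different difficulty, and I would prove them in the order \ref{factb2}, \ref{factb1}, \ref{factb3}, taking Fact \ref{preprocessed} as a black box (in particular the monotonicity \ref{facta2} and the $k$-ary homogeneity \ref{facta5}). For \ref{factb2} the plan is a Helly-type pruning. Fix $u\in [E]^{\leq d}$ and any $X\subseteq [E']^{\leq d}$ with $\bigcap X\subseteq u$, together with some $\alpha\in\bigcap_{v\in X}W'(v)$; it suffices to extract $X_0\subseteq X$ with $|X_0|\leq d+1$ and $\bigcap X_0\subseteq u$, since then $\alpha\in\bigcap_{v\in X_0}W'(v)$ is automatic. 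Choose any $v_*\in X$. As $|v_*|\leq d$, the set $v_*\setminus u$ has at most $d$ members, and for each $\beta\in v_*\setminus u$ we have $\beta\notin\bigcap X$ (because $\bigcap X\subseteq u$), so we may pick $v_\beta\in X$ with $\beta\notin v_\beta$. Put $X_0=\{v_*\}\cup\{v_\beta:\beta\in v_*\setminus u\}$. Then $|X_0|\leq d+1$, and $\bigcap X_0\subseteq v_*$ avoids every $\beta\in v_*\setminus u$, whence $\bigcap X_0\subseteq v_*\cap u\subseteq u$, as required.

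For \ref{factb1} both inclusions are formal. The inclusion $W(u\cap v)\subseteq W(u)\cap W(v)$ is just the monotonicity of $u\mapsto W(u)$, immediate from the definition: if $\bigcap X\subseteq u\cap v$ then $\bigcap X\subseteq u$ and $\bigcap X\subseteq v$. For the reverse, given $\alpha\in W(u)\cap W(v)$, pick witnessing families $X$ (with $\bigcap X\subseteq u$) and $X'$ (with $\bigcap X'\subseteq v$), each having $\alpha$ in every $W'(w)$; then $X\cup X'$ witnesses $\alpha\in W(u\cap v)$, since $\bigcap(X\cup X')=(\bigcap X)\cap(\bigcap X')\subseteq u\cap v$ and $\alpha\in W'(w)$ for every $w\in X\cup X'$.

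Clause \ref{factb3} is the substantial one. First I would observe that the isomorphism $\phi$ witnessing the hypothesis fixes $u$ pointwise: it carries the distinguished copy of $u$ in $\bigcup X$ order-preservingly onto the distinguished copy of $u$ in $\bigcup Y$, and an order-preserving bijection of a fixed finite set of ordinals onto itself is the identity. Consequently $\bigcap Y=\phi(\bigcap X)=\bigcap X=:w\subseteq u$. Now the (unmarked) hypothesis of Fact \ref{preprocessed}\ref{facta5} holds, so it supplies an order isomorphism $\Phi:\bigcup_i W'(u_i)\to\bigcup_i W'(v_i)$ with $\Phi(W'(u_i))=W'(v_i)$ for each $i$; being the order isomorphism between two well-ordered sets it is unique, and its restriction to each $W'(u_i)$ is forced to be $h_{W'(u_i),W'(v_i)}$. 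In particular $\Phi$ restricts to an order isomorphism $\bigcap_i W'(u_i)\to\bigcap_i W'(v_i)$, so the desired equality reduces to showing this restriction is the identity. The plan is to show every $\alpha\in\bigcap_i W'(u_i)$ is fixed: since $\phi$ fixes $w\subseteq u$ pointwise, each $h_{W'(u_i),W'(v_i)}$ acts identically on the initial segment of $W'(u_i)$ cut out by $w$, and one argues that the whole intersection $\bigcap_i W'(u_i)$ lies inside this common fixed region.

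The main obstacle is exactly that last step: upgrading the abstract order isomorphism $\Phi$ from \ref{facta5} to the \emph{identity} on $\bigcap_i W'(u_i)$. This is where the choice $E=\{\gamma_{\omega\nu}:\nu<\kappa\}$, leaving infinitely many members of $E'$ in every gap of $u$, is indispensable: the padding lets one realize the $X$-type and the $Y$-type over $u$ simultaneously and amalgamate them over the common data determined by $w$, forcing the several maps $h_{W'(u_i),W'(v_i)}$ to coincide with the identity on their overlaps. I would carry this out by a back-and-forth comparison of $X$ and $Y$ against an auxiliary family interleaved in the gaps of $u$, applying \ref{facta5} repeatedly; the delicate bookkeeping is ensuring that these maps cohere on the overlaps, which is precisely what the simultaneous $k$-ary form of \ref{facta5} is designed to guarantee.
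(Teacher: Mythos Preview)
The paper does not prove Fact~\ref{typel}; it is quoted from D\v{z}amonja--Larson--Mitchell \cite{MR2520110}, so there is no in-paper argument to compare yours against. What follows assesses your proposal on its own.

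Your proofs of \ref{factb2} and \ref{factb1} are correct and standard: the Helly-type extraction for \ref{factb2} and the two formal inclusions for \ref{factb1} go through exactly as written, using only the definition of $W$ and monotonicity of $W'$.

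For \ref{factb3} you have correctly isolated the one substantive step --- upgrading the abstract order isomorphism $\Phi$ from Fact~\ref{preprocessed}(\ref{facta5}) to the \emph{identity} on $\bigcap_i W'(u_i)$ --- and you are right that the padding $E=\{\gamma_{\omega\nu}\}$ is what makes this possible. Two minor caveats on the setup: the claim that $\phi$ fixes $u$ pointwise presumes the underlying set of the structure $(\bigcup X,u,u_0,\ldots)$ is $\bigcup X\cup u$, which is the intended reading but should be said; and the inclusion $\bigcap X\subseteq u$ belongs to the appended definition of ``isomorphism type with respect to $u$'' rather than to the bare if--then clause, so strictly speaking you are proving the version that is applied rather than the version that is stated.

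The genuine gap is your last paragraph. A warning about the interpolation idea as you describe it: once you choose $Z$ so that $(\bigcup X\cup\bigcup Z,u_i,z_i)\simeq(\bigcup Y\cup\bigcup Z,v_i,z_i)$, the lifted isomorphism $\Psi$ from \ref{facta5} \emph{does} fix each $W'(z_i)$ pointwise (since $\Psi$ restricts to an order automorphism of the well-ordered set $W'(z_i)$, hence to the identity). But this by itself does not force $\Psi$ to be the identity on $\bigcap_i W'(u_i)$, because there is no reason for that intersection to sit inside $\bigcup_i W'(z_i)$, and elements of $A=\bigcup_i W'(u_i)\cup\bigcup_i W'(z_i)$ lying outside $\bigcup_i W'(z_i)$ can be moved by $\Psi$. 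Closing this requires either additional structural information about $W'$ beyond the abstract properties listed in Fact~\ref{preprocessed}, or a more elaborate amalgamation than a single auxiliary $Z$; the details are worked out in \cite{MR2520110}. Your sketch stops exactly here, and it is the heart of the matter.
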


It suffices to consider $X\subset [E']^d$ in the definition of $W(u)$. Since for each finite $X=\{u_0,\cdots, u_{k-1}\}$ such that $\bigcap X\subset u$, we can find $u_i'\supset u_i$ for $i<k$ such that $u_i'\in [E']^d$ and $\bigcap_{i<k} \{u_i': i<k\}=\bigcap X$. Let $X'=\{u_i': i<k\}$. Then as $W'(u_i)\subset W'(u_i')$ so $\bigcap_{v\in X} W'(v) \subset \bigcap_{v\in X'} W'(v)$.

Now we are ready to show that $W$ and $E$ satisfy the requirements of Lemma \ref{Cleanup}. \ref{CL3} is satisfied by (\ref{factb1}) in Fact \ref{typel}. First we verify requirement \ref{CL4}. Given $u_2, w_2\in [E]^d$ and $u_1\subset u_2$ and $w_1\subset w_2$ with $(u_2,u_1,<)\simeq (w_2,w_1, <)$, we need to show 
\begin{equation*}h_{W(u_2), W(w_2)}(W(u_1))=W(w_1).
\end{equation*}
Let $\{X_p\subset [E']^d : p<l\}$ enumerate the representatives for the isomorphism types with respect to $u_2$ with $|X_p|\leq d+1$. Enumerate each $X_p$ as $\{u_p^i : i<k_p\}$. More precisely, for any $X\subset [E']^d$ and $|X|=k\leq d+1$ such that $\bigcap X \subset u_2$, there exists some $p<l$ such that $X$ and $X_p$ have the same isomorphism type in the sense of \ref{factb3} in Fact \ref{typel}.

For each $X_p$ we can find isomorphic $X_p'=\{w_p^i: i<k_p\}\subset [E']^d$, in the sense that  \begin{equation*}
(\bigcup X_p, u_2, u_p^0, \cdots, u_p^{k_p-1})\simeq (\bigcup X_p', w_2, w_p^0, \cdots, w_p^{k_p-1}),
\end{equation*}
 so that $\{X_p': p<l\}$ will be an enumeration of the representatives for the isomorphism types with respect to $w_2$, satisfying that \begin{equation*}
\begin{split}
 (\bigcup_{p<l ,i<k_p} u_p^i, u_2, u_0^0, \cdots , u_0^{k_0-1}, \cdots , u_{l-1}^{0}, \cdots , u_{l-1}^{k_{l-1}-1})\simeq \\  (\bigcup_{p<l ,i<k_p} w_p^i,w_2, w_0^0, \cdots , w_0^{k_0-1}, \cdots , w_{l-1}^{0}, \cdots , w_{l-1}^{k_{l-1}-1}).
 \end{split}
\end{equation*} 
 
The reason why we can do this is $u_2$ and $w_2$ only contains limit points with respect to the increasing enumeration of $E'$. Hence when choosing $X_p'$ for $p<l$, we make sure there are sufficiently many elements in $E'$ between consecutive elements in $\bigcup_{s\leq p} ( \bigcup X_s')\cup w_2$. Figure ~\ref{Demo} is a demonstration.
\begin{figure}

\begin{tikzpicture}
\filldraw [dotted] (0,2) --(2,2) circle (2pt) node[align=left,   below] {$a$} -- (3,2) circle (2pt) node[align=center, below] {$x_0$}-- (4,2) circle (2pt) node[align=center, below] {$x_4$} --(5,2) circle (2pt) node[align=center, below] {$x_1$} --
(6,2) circle (2pt) node[align=center, below] {$b=x_2$}     -- (8,2) circle (2pt) node[align=center, below] {$x_5$} -- (9,2) circle (2pt) node[align=center, below] {$x_6$} -- 
(10,2) circle (2pt) node[align=right,  below] {$c$}
;
\filldraw [dotted] (0,0) circle (2pt) node[align=left,   below] {$d$} -- (1,0) circle (2pt) node[align=center, below] {$x_0'$}-- (2,0) circle (2pt) node[align=center, below] {$x_4'$} --(3,0) circle (2pt) node[align=center, below] {$x_1'$}--
(4,0) circle (2pt) node[align=center, below] {$e=x_2'$}     -- (6,0) circle (2pt) node[align=center, below] {$x_5'$} -- (7,0) circle (2pt) node[align=center, below] {$x_6'$} --
(8,0) circle (2pt) node[align=right,  below] {$f$}
-- (10, 0)
;
\draw [->, thick] (2,2) -- (0.05,0.05);
\draw [->, thick] (6,2) -- (4.05,0.05);
\draw [->, thick] (10,2) -- (8.05,0.05);

\draw [->, thick] (3,2) -- (1.05,0.05);
\draw [->, thick] (4,2) -- (2.05,0.05);
\draw [->, thick] (5,2) -- (3.05,0.05);
\draw [->, thick] (8,2) -- (6.05,0.05);
\draw [->, thick] (9,2) -- (7.05,0.05);

\end{tikzpicture}
\caption{In the case where $d=3$, $u_2=\{a<b<c\}$, $w_2=\{d<e<f\}$, $X_0=\{\{x_0<x_1<x_2\}, \{x_4<x_5<x_6\}\}$ where $a<x_0 <x_4<x_1<b=x_2<x_5<x_6<c$, the graph shows how we find $X_0'=\{\{x_0'<x_1'<x_2'\}, \{x_4'<x_5'<x_6'\}\}$. We will make sure the length of the gap, namely the number of elements in $E'$, between consecutive elements in $(\bigcup X_0')\bigcup w_2$ is at least $(d(d+1))^l$. Recall $l$ is the number of representatives for the isomorphism types with respect to $u_2$. In general, we make sure that the number of elements in $E'$ between consecutive elements in $\bigcup_{s\leq p} (\bigcup X_s') \cup w_2$ is at least $(d(d+1))^{(l-p)}$.
The reason for the choice of the gap length is that we need to make sure there is enough room between consecutive elements in $\bigcup_{s\leq p} (\bigcup X_s') \cup w_2$ so that there is no problem when picking elements in $X_{p+1}'$.}
\label{Demo}
\end{figure}
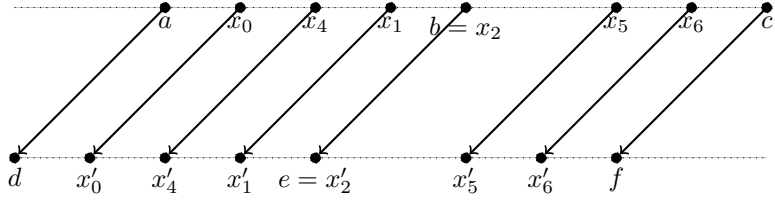

By (\ref{facta5}) in Fact \ref{preprocessed} this implies 

\begin{equation*}
\begin{split}
 (\bigcup_{p<l ,i<k_p} W'(u_p^i), W'(u_2), W'(u_0^0), \cdots , W'(u_0^{k_0-1}), \cdots , W'(u_{l-1}^{0}), \cdots , W'(u_{l-1}^{k_{l-1}-1}))\simeq \\  (\bigcup_{p<l ,i<k_p} W'(w_p^i), W'(w_2), W'(w_0^0), \cdots , W'(w_0^{k_0-1}), \cdots , W'(w_{l-1}^{0}), \cdots , W'(w_{l-1}^{k_{l-1}-1})).
\end{split}
\end{equation*}
Let $h$ be the isomorphism. Then $h\restriction W(u_2)=\bigcup \{\bigcap_{v\in X_p} W'(v): p<l\}$ is the isomorphism with $W(w_2)=\bigcup \{\bigcap_{v\in X_p'} W'(v): p<l\}$. 
Notice that $h$ respects blocks, more precisely, for each $p<l$, $h(\bigcap_{v\in X_p} W'(v))= \bigcap_{v\in X_p'} W'(v)$. Since there exists $K\subset l$ such that $W(u_1)=\bigcup \{\bigcap_{v\in X_p} W'(v): p\in K\}$ and $W(w_1)=\bigcup \{\bigcap_{v\in X_p'} W'(v): p\in K\}$ (exactly those $p<l$ such that $\bigcap X_p \subset u_1$ or equivalently $\bigcap X_p' \subset w_1$), so $h(W(u_1))=W(w_1)$.
This verifies requirement \ref{CL4}.

This also shows that the isomorphism extends that of $W'(u_2)\to W'(w_2)$ so $h(u_2)= w_2$. In particular requirement \ref{CL1} of Lemma \ref{Cleanup} is satisfied. To see requirement \ref{CL2} is satisfied, given $i<\kappa, \gamma<\delta_i, p\in \mathbb{P}\restriction W(u_2)$, such that $p\Vdash \dot{\tau}_i(u_2)=\gamma$. We need to show $h(p)=h_{W(u_2), W(w_2)}(p)\Vdash \dot{\tau}_i(w_2)=\gamma$. Suppose not, there exists $q'\leq h(p)$ and $l\neq \gamma, l<\delta_i$ such that $q'\Vdash \dot{\tau}_i(w_2)=l$. Since $\mathbb{P}\restriction W'(w_2)$ contains a maximal antichain deciding the values of $\dot{\tau}_i(w_2)$, there exists $q\in \mathbb{P}\restriction W'(w_2)$ such that $q\leq q'\restriction W'(w_2)$ that $q\Vdash \dot{\tau}_i(w_2)=l$. By the choice of $W'(w_2)$ as in (\ref{facta4}) of Fact \ref{preprocessed}, we have $h_{W'(u_2),W'(w_2)}^{-1} (q)=h_{W(u_2),W(w_2)}^{-1} (q)=h^{-1}(q)\Vdash \dot{\tau}_i(u_2)=l$. But $h^{-1}(q)\leq p\restriction W'(u_2)$ and $h^{-1}(q)\in \mathbb{P}\restriction W'(u_2)$. So take any $q^*\leq h^{-1}(q), p\restriction (W(u_2)-W'(u_2))$, then $q^*\leq p$ and $q^*\Vdash \dot{\tau}_i(u_2)=l$, contradiction. To get the other direction, argue with $h^{-1}$ in place of $h$.

\bibliographystyle{plain}
\bibliography{citation}

\Addresses

\end{document}